\newtheorem{lem}{Lemma}[section]
\newtheorem{cor}[lem]{Corollary}
\newtheorem{prop}[lem]{Proposition}
\newtheorem{thm}[lem]{Theorem}
\newtheorem{Defn}[lem]{Definition}
\newtheorem{Ex}[lem]{Example}
\newtheorem{Question}[lem]{Question}
\newtheorem{Property}[lem]{Property}
\newtheorem{Properties}[lem]{Properties}
\newtheorem{Discussion}[lem]{Remark}
\newtheorem{Construction}[lem]{Construction}
\newtheorem{Notation}[lem]{Notation}
\newtheorem{Fact}[lem]{Fact}
\newtheorem{Notationdefinition}[lem]{Definition/Notation}
\newtheorem{Remarkdefinition}[lem]{Remark/Definition}
\newtheorem{Subprops}{}[lem]
\newtheorem{Para}[lem]{}
\newenvironment{defn}{\begin{Defn}\rm}{\end{Defn}}
\newenvironment{ex}{\begin{Ex}\rm}{\end{Ex}}
\newenvironment{properties}{\begin{Properties}\rm}{\end{Properties}}
\newenvironment{fact}{\begin{Fact}\rm}{\end{Fact}}
\newenvironment{subprops}{\begin{Subprops}\rm}{\end{Subprops}}
\newenvironment{para}{\begin{Para}\rm}{\end{Para}}
\newenvironment{disc}{\begin{Discussion}\rm}{\end{Discussion}}
\newtheorem{intthm}{Theorem}
\numberwithin{equation}{lem}
\newcommand{\cat}[1]{\mathcal{#1}}
\newcommand{\catc}{\cat{C}}
\newcommand{\pd}{\operatorname{pd}}
\newcommand{\id}{\operatorname{id}}
\newcommand{\rank}{\operatorname{rank}}
\newcommand{\ann}{\operatorname{Ann}}
\newcommand{\Span}{\operatorname{Span}}
\newcommand{\ch}{\operatorname{char}}
\newcommand{\HH}{\operatorname{H}}
\newcommand{\coker}{\operatorname{Coker}}
\newcommand{\spec}{\operatorname{Spec}}
\newcommand{\im}{\operatorname{Im}}
\newcommand{\shift}{\mathsf{\Sigma}}
\newcommand{\Ker}{\operatorname{Ker}}
\newcommand{\ideal}[1]{\mathfrak{#1}}
\newcommand{\m}{\ideal{m}}
\newcommand{\n}{\ideal{n}}
\newcommand{\p}{\ideal{p}}
\newcommand{\ol}{\overline}
\newcommand{\wti}{\widetilde}
\newcommand{\ass}{\operatorname{Ass}}
\newcommand{\supp}{\operatorname{Supp}}
\newcommand{\zz}{\mathbb{Z}}
\newcommand{\xra}{\xrightarrow}
\newcommand{\res}{\xra{\simeq}}
\newcommand{\vf}{\varphi}
\newcommand{\alf}[1]{\alpha^{#1}}
\newcommand{\jinj}[1]{j^{#1}}
\newcommand{\ppro}[1]{p^{#1}}
\newcommand{\iinj}[1]{i^{#1}}
\newcommand{\qpro}[1]{q^{#1}}
\newcommand{\opro}[1]{\omega^{#1}}
\renewcommand{\geq}{\geqslant}
\renewcommand{\leq}{\leqslant}
\renewcommand{\ker}{\Ker}
\newcommand{\s}{\mathrm{S}^2}
\newcommand{\sw}{\mathrm{s}^2}
\begin{document}

\bibliographystyle{amsplain}

\title{Second symmetric powers of  chain complexes}

\author[A.\ J.\ Frankild]{Anders J.~Frankild}
\thanks{This work was completed after the untimely passing of Anders J.\ Frankild
on 10 June 2007.}

\author[S.\ Sather-Wagstaff]{Sean Sather-Wagstaff}
\address{Sean Sather-Wagstaff, Department of Mathematics,
NDSU Dept \# 2750,
PO Box 6050,
Fargo, ND 58108-6050
USA}
\email{Sean.Sather-Wagstaff@ndsu.edu}
\urladdr{http://www.ndsu.edu/pubweb/\~{}ssatherw/}

\author[A.\ Taylor]{Amelia Taylor}
\address{Amelia Taylor, Colorado College
14 E. Cache La Poudre St.
Colorado Springs, CO 80903, USA}
\email{amelia.taylor@coloradocollege.edu}
\urladdr{http://faculty1.coloradocollege.edu/~ataylor}

\keywords{chain complex, symmetric power, symmetric square}
\subjclass[2000]{13C10, 13D25}

\begin{abstract}
We investigate 
Buchbaum and Eisenbud's construction of
the second symmetric power $\s_R(X)$ of a chain complex
$X$ of modules over a commutative ring $R$. 
We state and prove a number of results from the folklore of the subject
for which we know of no good direct references.
We also provide several explicit computations
and examples.
We use this construction to prove the following version of a 
result of Avramov, Buchweitz, and \c{S}ega:
Let $R\to S$ be a module-finite ring homomorphism such that $R$ is
noetherian and local, and such that 2  is a unit in $R$. Let $X$ be a complex of finite rank
free $S$-modules such that $X_n=0$ for each $n<0$. If
$\cup_n\ass_R(\HH_n(X\otimes_SX))\subseteq\ass(R)$ and if
$X_{\p}\simeq S_{\p}$ for each $\p\in\ass(R)$, then $X\simeq S$.
\end{abstract}

\maketitle

\section*{Introduction}

Multilinear constructions like tensor products and symmetric powers 
are important tools for
studying modules over commutative rings. 
In recent years, these notions have been extended to the realm
of chain complexes of $R$-modules. 
(Consult Section~\ref{sec01}
for background information on complexes.)
For instance, Buchsbaum and Eisenbud's
description~\cite{buchsbaum:asffr} of the minimal free resolutions of Gorenstein ideals of grade 3
uses the second symmetric power of a certain free resolution.

In this paper, we investigate  Buchsbaum and Eisenbud's
second symmetric power functor: 
for a chain complex $X$ of modules over a commutative ring $R$, we set
$\s_R(X)=(X\otimes_R X)/(Y+Z)$
where $Y$ is the graded submodule generated by all elements of the form
$x\otimes
x'-(-1)^{|x||x'|}x'\otimes x$
and  $Z$ is the graded submodule generated by all elements of the form
$x\otimes x$ where $x$ has odd
degree.\footnote{Note that the definition of $\s_R(X)$ given
in~\cite{buchsbaum:asffr}  does
not yield the complex described on~\cite[p.~452]{buchsbaum:asffr} unless 2 is a unit in $R$. 
The corrected definition can be found, for instance,
in~\cite[(3.4.3)]{bruns:cmr}.} 

Our main result is the following 
version of a result of 
Avramov, Buchweitz and \c{S}ega~\cite[(2.2)]{avramov:edcrcvct}
for complexes. 
It is motivated by our work in~\cite{frankild:rbsc} extending
the results of~\cite{avramov:edcrcvct}. 
Note
that $\s_R(X)$ does not appear in the statement of
Theorem~\ref{intthmC}; however, it is the key tool for the proof,
given
in~\ref{thmCpf}. 

\begin{intthm} \label{intthmC}
Let $R\to S$ be a module-finite ring homomorphism such that $R$ is
noetherian and local, and such that 2  is a unit in $R$. Let $X$ be a complex of finite rank
free $S$-modules such that $X_n=0$ for each $n<0$. If
$\cup_n\ass_R(\HH_n(X\otimes_SX))\subseteq\ass(R)$ and if
$X_{\p}\simeq S_{\p}$ for each $\p\in\ass(R)$, then $X\simeq S$.
\end{intthm}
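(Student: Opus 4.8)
The plan is to extract, from the two hypotheses, enough information at the associated primes of $R$ to control the homology of $X$ globally, using the second symmetric power to linearize a ``generically rank one'' condition. The elementary engine is the following: if $N$ is an $R$-module with $\ass_R(N)\subseteq\ass(R)$ and $N_\p=0$ for every $\p\in\ass(R)$, then $N=0$; otherwise $N$ has an associated prime $\q\in\ass_R(N)\subseteq\ass(R)$, whence $N_\q\neq 0$, a contradiction. First I would apply this to the homology of $X\otimes_S X$. Since $X$ is a finite free complex and $X_\p\simeq S_\p$, the complex $X_\p$ is a bounded complex of flat $S_\p$-modules quasi-isomorphic to $S_\p$, so $(X\otimes_S X)_\p=X_\p\otimes_{S_\p}X_\p\simeq X_\p\otimes_{S_\p}S_\p=X_\p\simeq S_\p$; hence $\HH_n(X\otimes_S X)_\p=0$ for $n\neq 0$ and all $\p\in\ass(R)$. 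Combined with the hypothesis $\bigcup_n\ass_R(\HH_n(X\otimes_S X))\subseteq\ass(R)$ and the vanishing principle above, this gives $\HH_n(X\otimes_S X)=0$ for all $n\neq 0$; write $M=\HH_0(X\otimes_S X)$, so that $X\otimes_S X\simeq M$.

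Next I would bring in the symmetric square. As $2$ is a unit, the symmetrization and antisymmetrization idempotents split $X\otimes_S X\cong\s_S(X)\oplus\Lambda^2_S(X)$ as complexes, so each summand also has homology concentrated in degree $0$. In particular $\Lambda^2_S(X)\simeq\HH_0(\Lambda^2_S(X))=\Lambda^2_S(\HH_0(X))$ (the zeroth homology of the exterior square of a free resolution computes the exterior square of $\HH_0$), and this module is a direct summand of $M$. Thus it has $\ass_R\subseteq\ass(R)$, while it localizes to $\Lambda^2_{S_\p}(S_\p)=0$ at each $\p\in\ass(R)$ (because $X_\p\simeq S_\p$ forces $\HH_0(X)_\p\cong S_\p$). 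The vanishing principle then yields $\Lambda^2_S(\HH_0(X))=0$.

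To descend from $X\otimes_S X$ to $X$ itself, I would localize at an arbitrary prime $\q$ of $S$ and invoke the standard Nakayama computation of extremal homology over the local ring $S_\q$: for $X_\q\not\simeq 0$ one has $\sup(X_\q\otimes_{S_\q}X_\q)=2\sup X_\q$ and $\inf(X_\q\otimes_{S_\q}X_\q)=2\inf X_\q$. Since $(X\otimes_S X)_\q\simeq M_\q$ is concentrated in degree $0$, this forces $\sup X_\q=\inf X_\q=0$, i.e. $\HH_n(X)_\q=0$ for $n\neq 0$; as $\q$ was arbitrary, $X\simeq H$ with $H=\HH_0(X)$ a perfect $S$-module resolved by $X$. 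Moreover $H\lotimes_S H\simeq X\otimes_S X\simeq M$ is concentrated in degree $0$, so $\tor^S_{>0}(H,H)=0$. At each prime $\q$ the vanishing of $\Lambda^2_S(H)$ makes $H_\q$ cyclic, say $H_\q=S_\q/I_\q$, and $\tor^{S_\q}_1(H_\q,H_\q)=I_\q/I_\q^2=0$ gives $I_\q=I_\q^2$; by Nakayama $I_\q=0$ or $I_\q=S_\q$, so $H_\q$ is either $0$ or $S_\q$ and $H$ is a projective $S$-module of rank at most one.

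The hard part is to rule out the locus where $H$ has rank $0$, i.e. to show $H\cong S$ rather than merely a proper direct summand of $S$, and this is exactly where the finiteness of $X$ is indispensable. Because $X$ is a finite free complex, the integer $\chi=\sum_n(-1)^n\rank_S(X_n)$ is independent of the chosen prime, while at each $\q$ it equals $\rank_{S_\q}(H_\q)\in\{0,1\}$; hence this rank is constant on $\spec S$. Since $\HH_0(X)_\p\cong S_\p\neq 0$ for $\p\in\ass(R)$ exhibits rank $1$ there, we conclude $\chi=1$ and $H_\q\cong S_\q$ for every $\q$, so $H\cong S$ and therefore $X\simeq H\cong S$. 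I expect the real work to lie in organizing this local-to-global passage cleanly and in checking that the symmetric-square splitting from the earlier sections commutes with localization and with $\HH_0$; by contrast, once perfectness of $X$ is in hand the rank/Euler-characteristic step is short, and it is precisely that finiteness that prevents the rank from dropping to $0$ away from $\ass(R)$.
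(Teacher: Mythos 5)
There is a genuine gap at the descent step. The asserted ``standard Nakayama computation'' $\sup(X_\q\otimes_{S_\q}X_\q)=2\sup X_\q$ is false: only the infimum formula $\inf(P\otimes Q)=\inf(P)+\inf(Q)$ is a Nakayama fact (Fact~\ref{fact01}), and the supremum analogue fails because tensoring is right exact but not left exact. Concretely, for the Koszul complex $K=(0\to S_\q\xra{x}S_\q\to 0)$ on a regular element $x$ in the maximal ideal one has $\sup K=0$, yet $\HH_1(K\otimes_{S_\q}K)\cong\tor_1^{S_\q}(S_\q/x,S_\q/x)\cong S_\q/(x)\neq 0$, so $\sup(K\otimes K)=1$. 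Even the one inequality you actually need, $\sup(X_\q\otimes X_\q)\geq 2\sup X_\q$, is not elementary: the class of $\zeta\otimes\zeta$ for $0\neq\zeta\in\HH_s(X_\q)$ can die against differentials in the K\"unneth spectral sequence, and no Nakayama-type argument protects it. The implication ``homology of $X\otimes_SX$ concentrated in degree $0$ forces homology of $X$ concentrated in degree $0$'' is precisely the hard core of the theorem, and the paper never proves it in that form. Instead, the paper shows the \emph{antisymmetric summand} is globally acyclic: the splitting $X\otimes_SX\cong\im(\alf X)\oplus\s_S(X)$ gives $\ass_R(\HH_n(\im(\alf X)))\subseteq\ass_R(\HH_n(X\otimes_SX))\subseteq\ass(R)$; Theorem~\ref{symm07} applied at each $\p\in\ass(R)$ (where $X_\p\simeq S_\p$) gives $\im(\alf X)_\p\simeq 0$; hence $\im(\alf X)\simeq 0$, and Theorem~\ref{symm07} is invoked once more, its proof being a rank count on a minimal complex via Lemma~\ref{poinc01}, where $P^R_X(t)^2=P^R_X(-t^2)$ forces $X\cong\shift^{2n}R$. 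Your surrogate, $\wedge^2(\HH_0(X))=0$, is strictly weaker than $\im(\alf X)\simeq 0$: for $K=K^S(x,y)$ on a regular sequence, $\HH_0(K)=S/(x,y)$ is cyclic, so its exterior square vanishes, yet $\im(\alf K)$ has nonzero higher homology (Examples~\ref{koszul01} and~\ref{koszul01'}). So cyclicity plus $\tor_1(H,H)=0$ cannot replace the Poincar\'e-series step; your use of it presupposes $X\simeq\HH_0(X)$, which is exactly what remained to be shown.

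A second gap: the Euler-characteristic finale assumes $X$ is bounded (``a finite free complex''), but the theorem only assumes $X_n=0$ for $n<0$; $X$ may be an unbounded resolution, in which case $\chi=\sum_n(-1)^n\rank_S(X_n)$ is undefined and the local ranks of $\HH_0(X)$ need not be constant on $\spec(S)$ --- for instance, over $S=R\times R/(t)$ the unbounded periodic free resolution of $eS$ with $e=(1,0)$ has $\HH_0$ of rank $1$ at primes in one component of $\spec(S)$ and rank $0$ in the other. Your instinct that ruling out the rank-$0$ locus is ``the hard part'' is sound, since $\spec(S)$ can indeed be disconnected (compare Example~\ref{ex0002}); when $X$ happens to be bounded your Euler-characteristic argument is a clean and explicit way to do this gluing, arguably more explicit than the paper's brief final appeal to Theorem~\ref{symm07} through Remark~\ref{disc0002}. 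But as written it does not apply under the stated hypotheses, so the proposal both rests on a false lemma in its central step and proves the endgame only for bounded complexes.
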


Much of this paper is devoted to statements and proofs of results
from the folklore of this subject. 
Section~\ref{sec02}  contains basic
properties of $\s_R(X)$, most of which 
are motivated by the behavior of tensor products of complexes and the
properties of symmetric powers of modules.  
This section ends with an explicit description of the modules occuring
in $\s_R(X)$; see Theorem~\ref{betti01}.
Section~\ref{sec03}  examines the homological properties of $\s_R(X)$,
and includes the proof of Thoerem~\ref{intthmC}.
The paper concludes with Section~\ref{sec05}, which is devoted to 
explicit computations.

\section{Complexes} \label{sec01}

Throughout this paper $R$ and $S$ are commutative rings with identity.
The term ``module'' is short for ``unital module''.

This section consists of definitions, notation and background information for
use in the remainder of the paper.

\begin{defn} \label{notn01}
An \emph{$R$-complex} is a sequence of
$R$-module homomorphisms
$$X =\cdots\xra{\partial^X_{n+1}}X_n\xra{\partial^X_n}
X_{n-1}\xra{\partial^X_{n-1}}\cdots$$
such that $ \partial^X_{n-1}\partial^X_{n}=0$ for each integer $n$.
A complex $X$ is \emph{degreewise-finite} if each $X_n$ is finitely generated;
it is
\emph{bounded-below} if $X_n=0$ for $n\ll 0$. 

The
$n$th \emph{homology module} of $X$ is
$\HH_n(X):=\Ker(\partial^X_{n})/\im(\partial^X_{n+1})$.
The \emph{infimum} of 
$X$ is
$\inf(X):=\inf\{i\in\zz\mid\HH_n(X)\neq 0\}$,
and the \emph{large support} of $X$ is
$$\supp_R(X)=\{\p\in\spec(R)\mid X_{\p}\not\simeq 0\}
=\cup_n\supp_R(\HH_n(X)).$$
For each $x\in X_n$, we set $|x|:=n$. An $R$-complex $X$ is
\emph{homologically degreewise-finite} if 
$\HH_n(X)$ is finitely generated for each $n$; it is
\emph{homologically finite} if the $R$-module
$\oplus_{n\in\zz}\HH_n(X)$ is finitely generated. 

For each integer $i$,
the $i$th \emph{suspension} (or \emph{shift}) of $X$, denoted
$\shift^i X$, is the complex with $(\shift^i X)_n=X_{n-i}$ and
$\partial_n^{\shift^i X}=(-1)^i\partial_{n-i}^X$. The notation
$\shift X$ is short for $\shift^1 X$.
\end{defn}

\begin{defn} \label{notn02}
Let $X$ and $Y$ be $R$-complexes.
A \emph{morphism} from $X$ to $Y$ is a sequence 
of $R$-module homomorphisms
$\{f_n\colon
X_n\to Y_n\}$ such that $f_{n-1}\partial^X_n=\partial^Y_n f_n$ for
each $n$. A morphism of complexes $\alpha\colon X\to Y$ induces
homomorphisms on homology modules
$\HH_n(\alpha)\colon\HH_n(X)\to\HH_n(Y)$, and $\alpha$ is a
\emph{quasiisomorphism} when each $\HH_n(\alpha)$ is bijective.
Quasiisomorphisms are designated by the symbol ``$\simeq$''. 
\end{defn}

\begin{defn} \label{notn02'}
Let $X$ and $Y$ be $R$-complexes. Two morphisms $f,g\colon X\to Y$ are
\emph{homotopic} if  there exists a sequence of 
homomorphisms $s=\{s_n\colon X_n\to Y_{n+1}\}$ 
such that $f_n=g_n+\partial^Y_{n+1}
s_n+s_{n-1}\partial^X_n$ for each $n$; here we say that $s$ is a
\emph{homotopy} from $f$ to $g$. The morphism $f$ is a
\emph{homotopy equivalence} if there is a morphism $h\colon Y\to X$
such that the compositions $fh$ and $hf$ are homotopic to the
respective identity morphisms $\id_Y$ and $\id_X$, and then $f$ and
$h$ are \emph{homotopy inverses}.
\end{defn} 

\begin{defn} \label{notn03}
Given two bounded-below complexes $P$ and $Q$ of projective $R$-modules,
we write $P\simeq Q$ when there is a quasiisomorphism $P\res Q$.
\end{defn}

\begin{fact} \label{fact0001}
The relation $\simeq$ from Definition~\ref{notn03} is an equivalence relation;
see~\cite[(2.8.8.2.2')]{avramov:dgha}
or~\cite[(6.6.ii)]{felix:rht} or~\cite[(6.21)]{foxby:hacr}. 

Let $P$ and $Q$ be bounded-below complexes of projective $R$-modules.
Then any quasiisomorphism $P\res Q$ is a homotopy equivalence;
see~\cite[(1.8.5.3)]{avramov:dgha}
or~\cite[(6.4.iii)]{felix:rht}. (Conversely, it is straightforward to show that
any homotopy equivalence between $R$-complexes is a quasiisomorphism.)
\end{fact}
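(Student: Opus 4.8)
The plan is to establish the two assertions in the order opposite to their listing, since the equivalence-relation statement depends on the homotopy-equivalence statement. First I would dispose of the two formal parts of the equivalence relation: reflexivity holds because $\id_P\colon P\to P$ is a quasiisomorphism, and transitivity holds because if $\alpha\colon P\res Q$ and $\beta\colon Q\res W$ are quasiisomorphisms, then $\HH_n(\beta\alpha)=\HH_n(\beta)\HH_n(\alpha)$ is bijective for every $n$, so $\beta\alpha$ is again a quasiisomorphism. Symmetry is the one nonformal part: given a quasiisomorphism $\alpha\colon P\res Q$ I must produce a quasiisomorphism $Q\to P$, and this is exactly where the homotopy-equivalence claim enters.

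For the homotopy-equivalence claim, let $\alpha\colon P\res Q$ be a quasiisomorphism of bounded-below complexes of projectives. The key device is the mapping cone $\cone(\alpha)$, which is again a bounded-below complex of projective modules, its degree-$n$ term being $P_{n-1}\oplus Q_n$. From the short exact sequence of complexes $0\to Q\to\cone(\alpha)\to\shift P\to 0$ and its associated long exact homology sequence, whose connecting maps are, up to sign, the maps induced by $\alpha$ on homology, the hypothesis that $\alpha$ is a quasiisomorphism forces $\HH_n(\cone(\alpha))=0$ for all $n$. Thus $\cone(\alpha)$ is an acyclic bounded-below complex of projectives.

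Next I would prove the crucial lemma that any such complex $C$ is contractible, i.e.\ admits a homotopy $s=\{s_n\colon C_n\to C_{n+1}\}$ with $\partial_{n+1}s_n+s_{n-1}\partial_n=\id_{C_n}$. This is built by induction on $n$, starting from $s_n=0$ in the range where $C_n=0$. At each step one checks, using the inductive relation together with $\partial\partial=0$, that the map $\id_{C_n}-s_{n-1}\partial_n$ carries $C_n$ into $\ker\partial_n=\im\partial_{n+1}$; then projectivity of $C_n$ lets me lift it along the surjection $C_{n+1}\onto\im\partial_{n+1}$ to obtain $s_n$. Finally I would unpack the contracting homotopy of $\cone(\alpha)$ into its four block components relative to the decomposition $C_n=P_{n-1}\oplus Q_n$: the block $Q_n\to P_n$ assembles into a chain map $h\colon Q\to P$, while the diagonal blocks supply homotopies exhibiting $h\alpha\sim\id_P$ and $\alpha h\sim\id_Q$, so that $h$ is a two-sided homotopy inverse of $\alpha$. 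The converse, that a homotopy equivalence is a quasiisomorphism, is immediate since homotopic morphisms induce equal maps on homology; applying this to $h$ shows $h$ is itself a quasiisomorphism, which also completes the symmetry part of the equivalence relation.

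The main obstacle I anticipate is purely bookkeeping: correctly matching the sign conventions of the cone differential and of the suspension $\shift$ with those of Definition~\ref{notn01}, and then reading off from the four block equations of the contracting-homotopy relation $\partial s+s\partial=\id$ precisely which component is the chain map, which two components are the homotopies witnessing $h\alpha\sim\id_P$ and $\alpha h\sim\id_Q$, and which remaining relation is auxiliary and may be ignored. None of the individual verifications is deep, but the indexing must be handled with care.
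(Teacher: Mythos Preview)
Your argument is correct and follows the standard route to these results. Note, however, that the paper does not supply its own proof of this Fact: it simply records the statements and points to the references \cite{avramov:dgha}, \cite{felix:rht}, and \cite{foxby:hacr} for justification. So there is no in-paper proof to compare against; your sketch is precisely the kind of proof one would find by chasing those citations (mapping cone of a quasiisomorphism is acyclic, an acyclic bounded-below complex of projectives is contractible by an inductive lifting argument, and unpacking the contracting homotopy of the cone yields the homotopy inverse together with the two homotopies). Your identification of the sign bookkeeping as the only real hazard is accurate.
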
 

\begin{defn} \label{notn03'}
Let $X$ be a homologically bounded-below $R$-complex.  A
\emph{projective (or free) resolution}
of $X$ is a quasiisomorphism $P\xra{\simeq}X$ such that
each $P_n$ is projective (or free) and $P$ is bounded-below;
the resolution $P\res X$ is \emph{degreewise-finite} if  $P$ is
degreewise-finite.
We say that $X$ has \emph{finite projective dimension}
when it admits a projective resolution $P\xra{\simeq}X$ such that
$P_n=0$ for $n\gg 0$.
\end{defn}

\begin{fact} \label{fact0001'}
Let $X$ be a homologically bounded-below $R$-complex.
Then $X$ has a  free
resolution $P\xra{\simeq}X$ such that 
$P_n=0$ for all $n<\inf(X)$; see~\cite[(2.11.3.4)]{avramov:dgha}
or~\cite[(6.6.i)]{felix:rht} or~\cite[(2.6.P)]{foxby:hacr}. 
(It follows  that $P_{\inf(X)}\neq 0$.)
If $P\xra{\simeq}X$ and $Q\xra{\simeq}X$ are projective resolutions of $X$,
then there is a homotopy equivalence $P\xra{\simeq}Q$; 
see~\cite[(6.6.ii)]{felix:rht} or~\cite[(6.21)]{foxby:hacr}.  
If  $R$ is noetherian  and $X$ is homologically degreewise-finite,
then $P$ may be chosen degreewise-finite; see~\cite[(2.11.3.3)]{avramov:dgha}
or~\cite[(2.6.L)]{foxby:hacr}.
\end{fact}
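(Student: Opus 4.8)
The plan is to treat the three assertions separately: I would build a free resolution by the classical ``killing cycles'' construction, deduce uniqueness from the lifting property of bounded-below projective complexes together with Fact~\ref{fact0001}, and finally refine the construction under the noetherian hypothesis.

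For existence, set $s=\inf(X)$ and build $\pi\colon P\to X$ as an increasing union $P=\bigcup_n P^{(n)}$ of chain maps $\pi^{(n)}\colon P^{(n)}\to X$, where $P^{(n)}$ is free with $P^{(n)}_i=0$ for $i<s$ and for $i>n$. The induction hypothesis is that $\HH_i(\pi^{(n)})$ is bijective for $i<n$ and surjective for $i=n$. For the base case $n=s$, I would choose cycles $\xi_\lambda\in\Ker(\partial^X_s)$ whose classes generate $\HH_s(X)$ and let $P^{(s)}_s$ be free on symbols $e_\lambda$ with $\pi_s(e_\lambda)=\xi_\lambda$; since $P^{(s)}_{s-1}=0$, the module $\HH_s(P^{(s)})=P^{(s)}_s$ maps onto $\HH_s(X)$. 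In the step from $n$ to $n+1$ I would adjoin a free module $P_{n+1}=F_a\oplus F_b$ in degree $n+1$: generators of $F_a$ map under $\partial^P$ to representatives of a generating set of $\Ker\HH_n(\pi^{(n)})$ and under $\pi$ to chosen elements of $X_{n+1}$ witnessing that their images are boundaries, which makes $\HH_n$ bijective; generators of $F_b$ are sent to $0$ by $\partial^P$ and onto representatives of generators of $\HH_{n+1}(X)$ by $\pi$, which makes $\HH_{n+1}$ surjective. One checks directly that $\pi$ remains a chain map and that adjoining a module in degree $n+1$ leaves $\HH_i$ unchanged for $i<n$. Since each $P_i$ and each $\partial^P_i$ is finalized at a finite stage, $P$ is a well-defined bounded-below free complex, and $\HH_i(\pi)=\HH_i(\pi^{(n)})$ for any $n>i$, so $\pi$ is a quasiisomorphism. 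The parenthetical consequence is then immediate: because $P_{s-1}=0$, the module $\HH_s(P)$ is a quotient of $P_s$, and it is isomorphic to $\HH_s(X)\neq 0$, forcing $P_{\inf(X)}\neq 0$.

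For uniqueness, given projective resolutions $\pi\colon P\res X$ and $\rho\colon Q\res X$, I would invoke the comparison (lifting) theorem: since $P$ is a bounded-below complex of projectives and $\rho$ is a quasiisomorphism, $\pi$ lifts through $\rho$ to a chain map $\phi\colon P\to Q$ with $\rho\phi$ homotopic to $\pi$. Passing to homology gives $\HH(\rho)\HH(\phi)=\HH(\pi)$, so $\HH(\phi)$ is bijective and $\phi$ is a quasiisomorphism between bounded-below complexes of projectives; Fact~\ref{fact0001} then upgrades $\phi$ to a homotopy equivalence.

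Finally, for the degreewise-finite refinement I would run the same ``killing cycles'' construction with finite free modules at every stage. The hypothesis that $X$ is homologically degreewise-finite makes $\HH_{n+1}(X)$ finitely generated, so $F_b$ can be taken finite free; and if $P^{(n)}$ has been built degreewise-finite then, $R$ being noetherian, $\HH_n(P^{(n)})$ is a subquotient of a finitely generated free module and hence finitely generated, so its submodule $\Ker\HH_n(\pi^{(n)})$ is finitely generated and $F_a$ can be taken finite free. The main obstacle is exactly this finiteness bookkeeping: one must verify that the noetherian hypothesis propagates through the induction so that the cycle modules being killed never cease to be finitely generated, and that the homology of the colimit is correctly computed degreewise; the remaining steps are formal once the comparison theorem and Fact~\ref{fact0001} are in hand.
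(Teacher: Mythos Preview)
Your argument is correct and is precisely the standard ``killing cycles'' construction together with the comparison theorem; the paper itself does not supply a proof but simply records the result as a fact with citations to \cite{avramov:dgha}, \cite{felix:rht}, and \cite{foxby:hacr}, where essentially this argument appears. There is nothing to compare beyond noting that you have written out what the references contain.
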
 

\begin{defn} \label{notn03''}
Let $X$ be an  $R$-complex that is homologically both bounded-below
and degreewise-finite.  
Assume  that $R$ is noetherian and local with maximal ideal $\m$.
A projective resolution $P\res X$ is \emph{minimal} if 
the complex $P$ is minimal, that is, if
$\im(\partial^P_n)\subseteq\m P_{n-1}$ for each $n$.
\end{defn}

\begin{fact} \label{fact0001''}
Let $X$ be an  $R$-complex that is homologically both bounded-below
and degreewise-finite.  
Assume  that $R$ is noetherian and local with maximal ideal $\m$.
Then $X$ has a minimal free
resolution $P\xra{\simeq}X$ such that 
$P_n=0$ for all $n<\inf(X)$; see~\cite[Prop.\ 2]{apassov:cdgm}
or~\cite[(2.12.5.2.1)]{avramov:dgha}. 
Let $P\xra{\simeq}X$ and $Q\xra{\simeq}X$ be projective resolutions of $X$.
If $P$ is minimal, then there is a
bounded-below
exact complex $P'$ of projective $R$-modules such that
$Q\cong P\oplus P'$; see~\cite[(2.12.5.2.3)]{avramov:dgha}.
It follows that $X$ has finite projective dimension if and only if 
every minimal projective resolution of $X$ is bounded.
It also follows that, if  $P$ and $Q$ are both minimal,
then $P\cong Q$; see~\cite[(2.12.5.2.2)]{avramov:dgha}. 
\end{fact}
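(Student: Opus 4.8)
The plan is to deduce the entire statement from two ingredients: the existence and essential uniqueness of projective resolutions recorded in Fact~\ref{fact0001'}, together with the elementary rigidity of minimal complexes under homotopy. Throughout write $k=R/\m$ and $\overline{(-)}=(-)\otimes_R k$, and recall that a degreewise-finite free complex $P$ is minimal exactly when $\overline{\partial^P}=0$. The engine of the argument is the following observation: if $f\colon P\to P$ is a null-homotopic morphism of a minimal complex $P$, say $f=\partial^P s+s\partial^P$, then $\overline f=\overline{\partial^P}\,\overline s+\overline s\,\overline{\partial^P}=0$; hence any self-morphism of $P$ homotopic to $\id_P$ reduces to the identity modulo $\m$.

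\emph{Existence.} By Fact~\ref{fact0001'} the complex $X$ admits a degreewise-finite free resolution $F\res X$ with $F_n=0$ for $n<\inf(X)$. I would then minimalize $F$ by splitting off trivial summands: whenever $\overline{\partial^F_n}\neq 0$ some matrix entry of $\partial^F_n$ is a unit, and a change of basis in $F_n$ and $F_{n-1}$ (Gaussian elimination, legitimate over the local ring $R$) exhibits a direct summand $R\xra{\id}R$ concentrated in degrees $n,n-1$; the identity $\partial^F_{n-1}\partial^F_n=0$ guarantees the degree-$(n-1)$ generator is a cycle, so this contractible summand is a genuine subcomplex and splits off. The outcome is a decomposition $F\cong P\oplus T$ in which $P$ is minimal and $T$ is a bounded-below exact direct sum of such trivial complexes; since $T$ is contractible the composite $P\hookrightarrow F\res X$ is a minimal free resolution, with $P_n=0$ for $n<\inf(X)$. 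I expect this minimalization to be the main obstacle: one must keep the basis changes consistent across all degrees, so that a reduction made to $\partial^F_n$ does not reintroduce unit entries in $\partial^F_{n-1}$. This is cleanest to organize by lifting, via Nakayama, a direct-sum decomposition of the complex of $k$-vector spaces $\overline F$ into its homology and a contractible part; everything after this is formal.

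\emph{Comparison.} Let $P\res X$ be minimal and $Q\res X$ an arbitrary projective resolution. By Fact~\ref{fact0001'} there is a homotopy equivalence $\phi\colon P\to Q$; choose a homotopy inverse $\psi\colon Q\to P$, so that $\psi\phi$ is homotopic to $\id_P$. By the engine above $\overline{\psi\phi}=\id_{\overline P}$, so each $(\psi\phi)_n$ is an endomorphism of the finite free module $P_n$ that is invertible modulo $\m$, hence an automorphism; therefore the chain map $u:=\psi\phi$ is an isomorphism of complexes. Then $\rho:=u^{-1}\psi$ satisfies $\rho\phi=\id_P$, so $\phi$ is a split monomorphism of complexes and $Q\cong P\oplus P'$ with $P'=\ker\rho$. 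Each $P'_n$ is a direct summand of the projective module $Q_n$, hence projective, and $P'$ is bounded below; finally $\phi$ is a quasiisomorphism (Fact~\ref{fact0001}), so the long exact homology sequence of the split short exact sequence $0\to P\xra{\phi}Q\to P'\to 0$ forces $\HH(P')=0$, i.e.\ $P'$ is exact. This is exactly the asserted decomposition.

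\emph{Consequences.} The last two claims are formal corollaries. If $X$ has finite projective dimension it has a projective resolution $Q$ with $Q_n=0$ for $n\gg0$; for any minimal $P$ the comparison gives $Q\cong P\oplus P'$, so $P$ is a direct summand of the bounded complex $Q$ and is itself bounded, whence every minimal resolution is bounded, while the converse is immediate from Existence. If both $P$ and $Q$ are minimal, the comparison yields $Q\cong P\oplus P'$ with $P'$ exact; as a direct summand of the minimal, degreewise-finite complex $Q$ the complex $P'$ is itself minimal and degreewise finite, and a bounded-below minimal exact complex vanishes by a one-line Nakayama argument at its bottom nonzero degree (there $\partial$ has trivial target, so that free module equals $\im\partial\subseteq\m P'$). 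Hence $P'=0$ and $P\cong Q$.
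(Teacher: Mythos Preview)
The paper does not prove this statement: it is recorded as a \emph{Fact} with citations to \cite{apassov:cdgm} and \cite{avramov:dgha}, and no argument is given in the paper itself. So there is no in-paper proof to compare your proposal against.

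Your argument is the standard one and is essentially correct. Two small points are worth flagging. First, in the Comparison step you invoke Nakayama to conclude that each $(\psi\phi)_n$ is an automorphism of $P_n$; this uses that $P_n$ is finitely generated, which is not part of the paper's definition of ``minimal'' (Definition~\ref{notn03''}). You should remark that a minimal projective resolution of a homologically degreewise-finite complex over a noetherian local ring is automatically degreewise-finite: at the bottom degree $i=\inf(X)$ one has $P_i/\m P_i$ a quotient of $P_i/\im(\partial_{i+1}^P)\cong\HH_i(X)$, hence $P_i$ has finite rank, and one climbs inductively. Second, in your final paragraph you call $Q$ degreewise-finite, which the hypothesis does not guarantee; but your Nakayama-at-the-bottom argument for $P'=0$ does not actually need this, since over a local ring projective modules are free (Kaplansky) and a free module $F$ with $F=\m F$ is zero regardless of rank. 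With these two remarks inserted, the proof is complete.
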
 

\begin{defn} \label{tp03}
Let $X$ and $Y$ be $R$-complexes.
The  $R$-complex  $X\otimes_RY$ is 
$$\textstyle(X\otimes_RY)_n=\bigoplus_pX_p\otimes_RY_{n-p}$$
with $n$th differential $\partial_n^{X\otimes_RY}$ given on generators by
$$x\otimes y\mapsto \partial^{X}_{|x|}(x)\otimes y +(-1)^{|x|}x\otimes \partial^Y_{|y|}(y).$$
Fix two more $R$-complexes $X',Y'$ and morphisms
$f\colon X\to X'$ and $g\colon Y\to Y'$. 
Define the tensor product
$f\otimes_R g\colon X\otimes_R Y\to X'\otimes_R Y'$
on generators as
$$x\otimes y
\mapsto f_{|x|}(x)\otimes g_{|y|}(y).
$$
One checks readily that
$f\otimes_R g$ is a morphism.
\end{defn} 

\begin{fact} \label{fact01}
Let $P$ and $Q$ be bounded-below 
complexes of projective $R$-modules. 
If $f\colon X\res Y$ is a quasiisomorphism, then so are
$f\otimes_RQ\colon X\otimes_RQ\to Y\otimes_RQ$ and
$P\otimes_Rf\colon P\otimes_RX\to P\otimes_RY$;
see~\cite[(1.10.4.2.2')]{avramov:dgha}
or~\cite[(6.10)]{felix:rht} or~\cite[(7.8)]{foxby:hacr}. 
In particular, if $g\colon P\res Q$ is a quasiisomorphism,
then so is $g\otimes g\colon P \otimes_RP\to Q \otimes_R Q$;
see~\cite[(6.10)]{felix:rht}.
This can be used to show the following facts from~\cite[(7.28)]{foxby:hacr}:
\begin{align*}
\inf(P\otimes_R Q)
&\geq\inf(P)+\inf(Q) \\
\HH_{\inf(P)+\inf(Q)}^R(P\otimes_RQ)
&\cong\HH_{\inf(P)}(P)\otimes_R\HH_{\inf(Q)}(Q).
\end{align*} 

Assume that $R$ is noetherian and that $P$ and $Q$ are homologically degreewise-finite.  
One can use degreewise-finite projective resolutions of
$P$ and $Q$ in order to show that each $R$-module $\HH_n(P\otimes_R
Q)$ is finitely generated;
see~\cite[(7.31)]{foxby:hacr}.  In particular, if $R$ is local,
Nakayama's Lemma conspires with the previous display to produce the
equality $\inf(P\otimes_R Q)=\inf(P)+\inf(Q)$;
see~\cite[(7.28)]{foxby:hacr}. 
\end{fact}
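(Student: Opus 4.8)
The plan is to establish the assertions of Fact~\ref{fact01} in the order they are stated, reducing the last three to a direct degree count once the tensor product is known to preserve quasiisomorphisms. For the first assertion I would argue via mapping cones: a morphism $f\colon X\to Y$ is a quasiisomorphism precisely when its mapping cone $\cone(f)$ is exact, and $\cone(f\otimes_R Q)\cong\cone(f)\otimes_R Q$, so it suffices to prove that if $E$ is an exact $R$-complex and $Q$ is a bounded-below complex of projectives, then $E\otimes_R Q$ is exact. I would prove this by filtering $Q$ by its brutal truncations $F_nQ$ (with $(F_nQ)_i=Q_i$ for $i\leq n$ and $0$ otherwise); since $Q$ is bounded-below, $F_nQ=0$ for $n\ll0$. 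Each quotient $F_nQ/F_{n-1}Q\cong\shift^nQ_n$ is a shifted projective module, so the short exact sequence $0\to F_{n-1}Q\to F_nQ\to\shift^nQ_n\to0$ is degreewise split and stays exact after applying $E\otimes_R-$. As $Q_n$ is flat, $E\otimes_R\shift^nQ_n\cong\shift^n(E\otimes_RQ_n)$ is exact, so the long exact homology sequence and induction on $n$ give that $E\otimes_RF_nQ$ is exact for every $n$; since homology commutes with the filtered colimit $E\otimes_RQ=\varinjlim_n(E\otimes_RF_nQ)$, the complex $E\otimes_RQ$ is exact, and the symmetric argument handles $P\otimes_Rf$. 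This acyclicity step is the \emph{main obstacle}; everything afterward is formal. The second assertion is then immediate, since $g\otimes g$ factors as $(\id_Q\otimes_Rg)\circ(g\otimes_R\id_P)$, a composite of two quasiisomorphisms of the first type.

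For the displayed inf-bound and top-homology formula, write $p=\inf(P)$ and $q=\inf(Q)$ and invoke Fact~\ref{fact0001'} to choose free resolutions $F\res P$ and $G\res Q$ with $F_i=0=G_i$ for $i<p$ and $i<q$ respectively. By the first assertion the induced map $F\otimes_RG\to P\otimes_RQ$ (factored through $P\otimes_RG$) is a quasiisomorphism, so it suffices to compute $\HH(F\otimes_RG)$. The degree bounds give $(F\otimes_RG)_n=\bigoplus_iF_i\otimes_RG_{n-i}=0$ for $n<p+q$, whence $\inf(P\otimes_RQ)\geq p+q$. In degree $p+q$ the only surviving summand is $F_p\otimes_RG_q$, every element of which is a cycle, and a short computation identifies the incoming boundaries as $\im(\partial^F_{p+1})\otimes_RG_q+F_p\otimes_R\im(\partial^G_{q+1})$; right-exactness of $\otimes_R$ then yields $\HH_{p+q}(F\otimes_RG)\cong(F_p/\im\partial^F_{p+1})\otimes_R(G_q/\im\partial^G_{q+1})\cong\HH_p(F)\otimes_R\HH_q(G)$, which transports to $\HH_{\inf(P)}(P)\otimes_R\HH_{\inf(Q)}(Q)$ along the resolutions.

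For the finiteness statement, assume $R$ is noetherian and $P,Q$ are homologically degreewise-finite, and use Fact~\ref{fact0001'} to take the resolutions $F\res P$, $G\res Q$ degreewise-finite and bounded-below. Then each $(F\otimes_RG)_n=\bigoplus_iF_i\otimes_RG_{n-i}$ is a finite direct sum of finitely generated modules, hence finitely generated, so its subquotient $\HH_n(F\otimes_RG)\cong\HH_n(P\otimes_RQ)$ is finitely generated over the noetherian ring $R$. Finally, if $R$ is also local, then $\HH_p(P)$ and $\HH_q(Q)$ are nonzero finitely generated modules, so Nakayama's Lemma forces $\HH_p(P)\otimes_R\HH_q(Q)\neq0$; combining this with the inequality $\inf(P\otimes_RQ)\geq p+q$ and the top-homology formula gives the equality $\inf(P\otimes_RQ)=\inf(P)+\inf(Q)$.
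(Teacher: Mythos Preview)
The paper does not supply a proof of this Fact: it is stated with external references (\cite{avramov:dgha}, \cite{felix:rht}, \cite{foxby:hacr}) and no argument is given in the text. So there is no in-paper proof to compare against; your write-up is essentially what one would produce by unpacking those references.

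That said, your argument is correct and follows the standard route. The mapping-cone reduction $\cone(f\otimes_RQ)\cong\cone(f)\otimes_RQ$ and the brutal-truncation filtration of $Q$ together give exactly the acyclicity lemma underlying the cited results; the eventual-stabilization of the filtered colimit in each fixed degree (since $Q$ is bounded-below) makes the passage $E\otimes_RQ=\varinjlim_n(E\otimes_RF_nQ)$ unproblematic. Your derivation of the inf-bound and the bottom-homology isomorphism via resolutions $F\res P$, $G\res Q$ concentrated in degrees $\geq\inf(P)$ and $\geq\inf(Q)$ is the argument behind \cite[(7.28)]{foxby:hacr}, and the finiteness and Nakayama steps are likewise the intended ones. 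Nothing is missing.
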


The following technical lemma about power series is used in the proofs
of  Theorem~\ref{symm07''} and
Corollary~\ref{s2fpd02}. 

\begin{lem}\label{poinc01}
Let $Q(t) = \sum_{i=0}^{\infty} r_it^i$ be a power series with
nonnegative integer coefficients, and assume $r_0> 0$. If 
either $Q(t)^2 + Q(-t^2)$ or $Q(t)^2 - Q(-t^2)$ is a non-negative integer, then $r_i = 0$ for all
$i>0$.  Furthermore, 
\begin{enumerate}[\quad\rm(a)]
\item  \label{poinc01item1} $Q(t)^2 + Q(-t^2) \neq 0$;
\item \label{poinc01item2} If $Q(t)^2 - Q(-t^2) = 0$, then $Q(t) = 1$;  
\item \label{poinc01item3} If $Q(t)^2 + Q(-t^2) = 2$, then $Q(t)= 1$; and
\item \label{poinc01item4} If $Q(t)^2 - Q(-t^2) = 2$, then $Q(t) = 2$. 
\end{enumerate}
\end{lem}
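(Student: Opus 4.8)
The plan is to isolate the single essential point — that the hypothesis forces $Q(t)$ to equal the constant $r_0$ — and then read off the four numbered assertions by elementary arithmetic. First I would expand $Q(t)^2 = \sum_{N\geq 0} c_N t^N$ with $c_N = \sum_{i+j=N} r_i r_j$, observing that every $c_N \geq 0$ because the $r_i$ are nonnegative integers. Since $Q(-t^2) = \sum_{m\geq 0} (-1)^m r_m t^{2m}$ involves only even powers of $t$, the coefficient of $t^N$ in $Q(t)^2 \pm Q(-t^2)$ is $c_N$ when $N$ is odd and $c_{2m} \pm (-1)^m r_m$ when $N = 2m$ is even. The hypothesis that one of these two series is a nonnegative integer (hence a constant power series) says precisely that all of these coefficients vanish for $N \geq 1$.

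Next I would run a least-index argument to conclude $r_i = 0$ for every $i > 0$. Suppose not, and let $k \geq 1$ be the smallest index with $r_k \neq 0$, so that $r_1 = \cdots = r_{k-1} = 0$ while $r_k > 0$. In $c_k = \sum_{i+j=k} r_i r_j$, every term with $1 \leq i \leq k-1$ vanishes by minimality of $k$, leaving only $r_0 r_k$ and $r_k r_0$; hence $c_k = 2 r_0 r_k > 0$. If $k$ is odd, the degree-$k$ coefficient of $Q(t)^2 \pm Q(-t^2)$ is exactly $c_k > 0$, contradicting its required vanishing. If $k = 2m$ is even, then $1 \leq m < k$, so $r_m = 0$ and the extra term $\pm (-1)^m r_m$ drops out, again leaving $c_k > 0$. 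Either way we reach a contradiction, so $Q(t) = r_0$ is constant.

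Finally I would deduce the four parts. For \eqref{poinc01item1}, the constant term of $Q(t)^2 + Q(-t^2)$ is $r_0^2 + r_0 = r_0(r_0+1)$, which is positive since $r_0 > 0$; thus this series is never the zero series, and this part needs no appeal to constancy. For \eqref{poinc01item2}, \eqref{poinc01item3}, and \eqref{poinc01item4}, the constancy result reduces each equation to a quadratic in the positive integer $r_0$ — namely $r_0^2 - r_0 = 0$, $r_0^2 + r_0 = 2$, and $r_0^2 - r_0 = 2$ — whose only positive solutions are $r_0 = 1$, $1$, and $2$, giving $Q(t) = 1$, $1$, and $2$ respectively. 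I expect the only delicate point to be the even case of the least-index argument, where one must verify that the potentially cancelling contribution $(-1)^m r_m$ from $Q(-t^2)$ cannot mask the positive coefficient $c_k$; this is exactly where minimality of $k$ is used, since $m = k/2 < k$ forces $r_m = 0$.
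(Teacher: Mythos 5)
Your proposal is correct and takes essentially the same route as the paper: both arguments compare coefficients of $Q(t)^2 \pm Q(-t^2)$ degree by degree, using the facts that $Q(-t^2)$ has no odd-degree terms and that its degree-$2m$ contribution $\pm(-1)^m r_m$ involves a strictly smaller index, so that only the term $2r_0r_k$ survives and forces $r_k=0$, after which all four parts reduce to the quadratics $r_0^2\pm r_0 \in\{0,2\}$. The paper phrases the reduction as an induction on the degree where you use a least-index (minimal counterexample) argument, and you observe that part (a) follows directly from the positivity of the constant term $r_0^2+r_0$ without invoking constancy, but these are cosmetic variants of the identical argument.
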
 

\begin{proof}
We begin by  showing that $r_n=0$ for each $n\geq 1$, by induction on $n$. 
The coefficients of $Q(-t^2)$ in odd degree are all 0.
Hence, the 
degree 1 coefficient of $Q(t)^2 \pm Q(-t^2)$ is
$$0=r_1r_0+r_0r_1=2r_1r_0.$$
It follows that $r_1=0$, since $r_0 > 0$.
Inductively, assume that $n\geq 1$ and that $r_i=0$ for each $i=1,\ldots,n$.
Since the degree $n+1$ coefficient of $Q^R_X(-t^2)$ is either
$\pm r_{\frac{n+1}{2}}$ (when $n+1$ is even) or $0$ (when $n+1$ is odd), 
the induction hypothesis
implies that this coefficient is 0.
The degree $n+1$ coefficient of $Q(t)^2\pm Q(-t^2)$ is
$$0=r_{n+1}r_0+\underbrace{r_nr_1+\cdots+r_1r_n}_{=0}+r_0r_{n+1}=2r_{n+1}r_0$$
and so $r_{n+1}=0$.

The previous paragraph shows that $Q(t)=r_0$, and so $Q(t)^2 \pm Q(-t^2) =r_0^2\mp r_0$.
The conclusions in \eqref{poinc01item1}--\eqref{poinc01item4} 
follow readily, using the assumption $r_0>0$.
\end{proof}

\section{Definition and Basic Properties of $\s_R(X)$} \label{sec02}

We begin this section with the definition of the second symmetric power of a complex.
It is modeled 
on the definition for modules.

\begin{defn} \label{symm01}
Let $X$ be an $R$-complex and let
$\alf X\colon X\otimes_R X\to X\otimes_R X$ be the morphism
described on generators by the formula
$$x\otimes x'\mapsto x\otimes x'-(-1)^{|x||x'|}x'\otimes x.$$
The \emph{weak second symmetric power} of $X$ is defined as
$\sw_R(X):=\coker(\alf X)$.
The \emph{second symmetric power} of $X$ is defined as
$\s_R(X):=\sw_R(X)/\left\langle \ol{x\otimes x}\mid\text{$|x|$ is odd}\right\rangle$.
For each $i\in\mathbb Z$, let $\opro X_i\colon\sw_R(X)_i\to\s_R(X)_i$ be the natural surjection.
\end{defn}

\begin{disc} \label{disc1222}
Let $X$ be an $R$-complex. 
Since $\sw_R(X)$ is defined as a cokernel of a morphism, it is an $R$-complex.
Also, for each $n\in\mathbb Z$ and $x\in X_{2n+1}$,
one has
$$\partial^{X\otimes_RX}_{4n+2}(x\otimes x)=\alf X_{4n+1}(\partial^X_{2n+1}(x)\otimes x).$$
It follows that $\s_R(X)$ is an $R$-complex, and that the sequence
$\{\opro X_i\}$ describes a morphism $\opro X\colon\sw_R(X)\to\s_R(X)$.
\end{disc}

Here are  computations for later use. 
Section~\ref{sec05} contains more involved examples.

\begin{ex}
\label{symm04item4} If $M$ is an $R$-module, then computing
$\s_R(M)$ and $\sw_R(M)$ as complexes 
(considering $M$ as a complex concentrated in degree 0) and 
computing $\s_R(M)$ as a module give the same result. In
particular, we have $\s_R(0)=0=\sw_R(0)$ and $\s_R(R)\cong R\cong\sw_R(R)$.
\end{ex}

\begin{ex}\label{symm045}
For $0\neq x,y\in \shift R$ we have  
$\alf{\shift R}(x\otimes y) =  x\otimes y+ y\otimes x$.
Hence, the natural tensor-cancellation isomorphism $R\otimes_R R\xra\cong R$ 
yields the vertical isomorphisms in the following
commutative diagram:
$$\xymatrix{
(\shift R)\otimes_R(\shift R) \ar[r]^-{\alf{\shift R}} \ar[d]_{\cong}^{\beta}
& (\shift R)\otimes_R(\shift R) \ar[d]_{\cong}^{\beta} \ar[r]^-{\ppro{\shift R}}
&\sw_R(\shift R)\ar[d]_{\cong}^{\ol\beta}
\\
\shift^{2}R \ar[r]^-{(2)} & \shift^{2}R\ar[r]&\shift^2 R/(2)
}
$$
It follows that $\sw_R(\shift R) \cong\shift^2 R/(2)$.

By definition,
the kernel of the natural map
$\opro X\colon\sw_R(X)\to\s_R(X)$
is generated by $\ol{1\otimes 1}\in\sw_R(\shift R)_2$.
Since we have $\ol\beta_2\left(\ol{1\otimes 1}\right)=\ol 1$,
it follows that $\s_R(\shift R)=0$.

More generally, we have $\sw_R(\shift^{2n+1} R) \cong \shift^{4n+2}R/(2)$
and $\s_R(\shift^{2n+1} R)=0$
for each integer $n$.  In particular, if $2R\neq 0$, then 
$$\sw_R(\shift^{2n+1} R) \cong \shift^{4n+2}R/(2)\not\simeq \shift^{4n+2}R \cong\shift^{4n+2}\sw_R(R).$$
Contrast this with the behavior of $\sw_R(\shift^{2n}X)$
and $\s_R(\shift^{2n}X)$ documented in~\eqref{symm04item3}.
\end{ex}

The following properties are straightforward to verify and will be
used frequently in the sequel.

\begin{properties} \label{symm04}
Let $X$ be an $R$-complex.
\begin{subprops}
\label{symm04item5} If  2 is a unit in $R$, then the natural morphism
$\opro X\colon\sw_R(X)\to\s_R(X)$ is an isomorphism, and the morphism $\frac{1}{2}\alf X$ is idempotent.
\end{subprops}
\begin{subprops}
\label{symm04item3}
For each integer $n$, there is a commutative diagram
$$\xymatrix{
(\shift^{2n}X)\otimes_R(\shift^{2n}X) \ar[r]^-{\alf{\shift^{2n}X}} \ar[d]_{\cong}^{\beta}
& (\shift^{2n}X)\otimes_R(\shift^{2n}X) \ar[d]_{\cong}^{\beta} \\
\shift^{4n}(X\otimes_R X) \ar[r]^-{\shift^{4n}\alf X} & \shift^{4n}(X\otimes_R X)
}
$$
with $\beta(x\otimes y)=x\otimes y$.
The resulting isomorphism of cokernels yields 
an isomorphism 
$$\ol\beta\colon\sw_R(\shift^{2n}X)\xra\cong\shift^{4n}\sw_R(X)
$$
given by $\ol\beta\left(\ol{x\otimes y} \right)=\ol{x\otimes y}$.
In particular, the equality $\ol\beta\left(\ol{x\otimes x} \right)=\ol{x\otimes x}$
implies that $\ol\beta$ induces an isomorphism 
$$\s_R(\shift^{2n}X)\cong\shift^{4n}\s_R(X).$$
\end{subprops}
\begin{subprops}
\label{symm04item1}
There is an exact sequence
\begin{equation} \notag
0\to
\ker(\alf X)\xra{\jinj X}
X\otimes_R X\xra{\alf X}
X\otimes_R X \xra{\ppro X} \sw_R(X)\to 0
\end{equation}
where $\jinj X$ and $\ppro X$ are the natural injection and surjection, respectively.
\end{subprops}
\begin{subprops}
\label{symm04item2}
A morphism of complexes $f\colon X\to Y$ yields a commutative
diagram
\begin{equation}
\begin{split} \notag
\xymatrix{
X\otimes_R X \ar[r]^-{\alf X} \ar[d]_{f\otimes_R f}
& X\otimes_R X \ar[d]^{f\otimes_R f} \\
Y\otimes_R Y \ar[r]^-{\alf Y} & Y\otimes_R Y.
}
\end{split}
\end{equation}
Hence, this induces a well-defined morphism on cokernels
$\sw_R(f)\colon\sw_R(X)\to\sw_R(Y)$, 
given by $\sw_R(f)\left(\ol{x\otimes y} \right)=\ol{f(x)\otimes f(y)}$.
The equality $\sw_R(f)\left(\ol{x\otimes x} \right)=\ol{f(x)\otimes f(x)}$
shows that $\sw_R(f)$ induces a
well-defined morphism $\s_R(f)\colon\s_R(X)\to\s_R(Y)$
given by $\s_R(f)\left(\ol{x\otimes y} \right)=\ol{f(x)\otimes f(y)}$. 
From the definition, one sees that the operators $\sw_R(-)$ and $\s_R(-)$ are
functorial, but Example~\ref{notadd01} shows that they are not
additive, as one might expect. 
\end{subprops}
\end{properties}

The next two results show that 
the functors $\sw_R(-)$ and $\s_R(-)$ interact well with basic
constructions.  

\begin{prop} \label{loc01}
Let $X$ be an $R$-complex.
\begin{enumerate}[\quad\rm(a)]
\item \label{loc01item1}
If $\vf\colon R\to S$ is a ring homomorphism, then there  are isomorphisms
of $S$-complexes
$\sw_S(S\otimes_R X)\cong S\otimes_R \sw_R(X)$ and 
$\s_S(S\otimes_R X)\cong S\otimes_R \s_R(X)$.
\item \label{loc01item2}
If $\p\subset R$ is a prime ideal, then there  are isomorphisms of $R_{\p}$-complexes
$\sw_{R_{\p}}(X_{\p})\cong \sw_R(X)_{\p}$ and $\s_{R_{\p}}(X_{\p})\cong \s_R(X)_{\p}$.
\end{enumerate}
\end{prop}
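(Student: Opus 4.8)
The plan is to prove Proposition~\ref{loc01} by showing that both $\sw_R(-)$ and $\s_R(-)$ commute with the relevant base-change functors, exploiting the fact that these symmetric powers are defined via cokernels of the explicit morphism $\alf X$ and that the functors in question (extension of scalars $S\otimes_R-$ and localization) are right exact and hence preserve cokernels. For part~\eqref{loc01item1}, I would first recall the standard natural isomorphism of $S$-complexes $S\otimes_R(X\otimes_R X)\cong(S\otimes_R X)\otimes_S(S\otimes_R X)$, which sends $s\otimes(x\otimes x')$ to $(s\otimes x)\otimes(1\otimes x')$ and is the complex-level analogue of the usual base-change formula for tensor products. The key observation is then that this isomorphism is compatible with the symmetrizing morphisms, i.e. that the square relating $1_S\otimes_R\alf X$ to $\alf{S\otimes_R X}$ commutes: chasing a generator $s\otimes(x\otimes x')$ through both paths yields $(s\otimes x)\otimes(1\otimes x')-(-1)^{|x||x'|}(1\otimes x')\otimes(s\otimes x)$ on each side, since the sign $(-1)^{|x||x'|}$ is unchanged under base change because degrees are preserved.

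Once that compatibility square is established, I would apply right exactness. Since $S\otimes_R-$ is right exact and preserves cokernels, applying it to the defining sequence of $\sw_R(X)$ in \eqref{symm04item1} and invoking the commutative square gives the natural isomorphism $\sw_S(S\otimes_R X)\cong S\otimes_R\sw_R(X)$. For the passage to $\s$, I would note that $\s_R(X)$ is the cokernel of $\sw_R(X)$ modulo the submodule generated by the classes $\ol{x\otimes x}$ with $|x|$ odd; base change carries this submodule onto the submodule of $\sw_S(S\otimes_R X)$ generated by the corresponding odd-degree square classes (the generators $\ol{(1\otimes x)\otimes(1\otimes x)}$ together with their $S$-multiples span the same submodule as all $\ol{w\otimes w}$ for $w$ of odd degree, using bilinearity and the relation $w\otimes w'+w'\otimes w\equiv 0$ already imposed in $\sw_S$). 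Dividing by these submodules and using the right exactness of $S\otimes_R-$ one more time yields the second isomorphism $\s_S(S\otimes_R X)\cong S\otimes_R\s_R(X)$.

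For part~\eqref{loc01item2}, I would deduce it as a special case of part~\eqref{loc01item1} applied to the localization map $\vf\colon R\to R_{\p}$, using the natural identification $(R_{\p})\otimes_R X\cong X_{\p}$ of $R_{\p}$-complexes and the fact that localization commutes with the cokernel and tensor constructions. Thus $\sw_{R_{\p}}(X_{\p})=\sw_{R_{\p}}(R_{\p}\otimes_R X)\cong R_{\p}\otimes_R\sw_R(X)\cong\sw_R(X)_{\p}$, and similarly for $\s$. The main obstacle I anticipate is purely bookkeeping rather than conceptual: verifying that all the base-change isomorphisms are morphisms of complexes (compatible with the differentials, including the Koszul signs coming from Definition~\ref{tp03}) and, more delicately, checking that base change sends the odd-degree relation submodule onto the corresponding submodule and not merely into it, so that the induced map on the quotients $\s$ is an isomorphism and not just a surjection. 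This surjectivity-of-generators check is where care is needed, since a priori the odd-degree elements of $S\otimes_R X$ are $S$-linear combinations rather than elementary tensors; the resolution is that in $\sw_S$ the identity $\ol{w\otimes w'}=(-1)^{|w||w'|}\ol{w'\otimes w}$ already holds, which lets one reduce an arbitrary square $\ol{w\otimes w}$ to a sum of the distinguished generators $\ol{(1\otimes x_i)\otimes(1\otimes x_i)}$ plus cross terms that lie in the symmetric relation submodule.
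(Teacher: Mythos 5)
Your proposal is correct and takes essentially the same route as the paper: the same base-change compatibility square relating $S\otimes_R\alf X$ to $\alf{S\otimes_R X}$, the same appeal to right exactness of $S\otimes_R-$ to identify the cokernels defining $\sw$, and the same identification of the odd-degree relation submodule with the one generated by the classes $\ol{(1\otimes x)\otimes(1\otimes x)}$ (a step the paper calls straightforward and you correctly justify via the cross-term cancellation $\ol{u\otimes v}+\ol{v\otimes u}=0$ in $\sw_S$ for odd degrees). Part~\eqref{loc01item2} is likewise deduced in the paper exactly as you propose, by specializing part~\eqref{loc01item1} to the localization map $R\to R_{\p}$.
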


\begin{proof}
\eqref{loc01item1}
The vertical isomorphisms
in the following commutative diagram
are given by $\beta((s\otimes x)\otimes(t\otimes y))=(st)\otimes(x\otimes y)$:
$$\xymatrix{
(S\otimes_R X)\otimes_S (S\otimes_R X) \ar[rr]^{\alf{S\otimes_R X}} \ar[d]_{\cong}^{\beta}
&& (S\otimes_R X)\otimes_S (S\otimes_R X) \ar[d]_{\cong}^{\beta}  \\
S\otimes_R (X\otimes_R X) \ar[rr]^{S\otimes_R \alf X}
&& S\otimes_R (X\otimes_R X).
}$$
This diagram yields the first isomorphism in the next sequence.
The second isomorphism is due to the right-exactness of
$S\otimes_R-$, and the equalities are by definition.
\begin{align*}
\sw_S(S\otimes_R X)
&=\coker(\alf{S\otimes_R X})
\cong\coker(S\otimes_R \alf X)\\
&\cong S\otimes_R \coker(\alf X)
= S\otimes_R\sw_R(X).
\end{align*}
By definition, the induced isomorphism
$\ol\beta\colon \sw_S(S\otimes_R X)\xra\cong S\otimes_R\sw_R(X)$
is given by $\ol\beta\left(\ol{(s\otimes x)\otimes(t\otimes y)}\right)=\ol{(st)\otimes(x\otimes y)}$.

Let  $Y\subseteq \sw_S(S\otimes_R X)$ be the $S$-submodule
generated by elements of the form $\ol{u\otimes u}$ 
such that $u\in S\otimes_R X$ has odd degree.
That is, $Y=\ker(\opro{S\otimes X})$ where $\opro{S\otimes X}\colon\sw_S(S\otimes_R X)
\to\s_S(S\otimes_R X)$ is the natural surjection.
It is straightforward to show that $Y$ is generated over $S$ by all elements of the form
$\ol{(1\otimes x)\otimes(1\otimes x)}$. 

Let $Z\subset\sw_R(X)$ be the $R$-submodule generated 
by elements of the form $\ol{x\otimes x}$ with $x\in X$ of odd degree.
That is, we have an exact sequence of $R$-morphisms
$$0\to Z\to\sw_R(X)\xra{\opro X}\s_R(X)\to 0.$$
Tensoring with $S$ yields the next exact sequence of $S$-morphisms
$$S\otimes_RZ\to S\otimes_R\sw_R(X)\xra{S\otimes_R\opro X} S\otimes_R\s_R(X)\to 0$$
and it follows that $\ker(S\otimes_R\opro X)$ is generated over $S$ by all elements of the
form $\ol{1\otimes(x\otimes x)}$ with $x\in X$ of odd degree.
Thus, the equality
$\ol\beta\left(\ol{(1\otimes x)\otimes(1\otimes x)}\right)=\ol{1\otimes(x\otimes x)}$
shows that $\ol\beta$ induces an $S$-isomorphism $\s_S(S\otimes_R X) \cong S\otimes_R\s_R(X)$.

\eqref{loc01item2}
This follows from part~\eqref{loc01item1}
using the ring homomorphism $R\to R_{\p}$.
\end{proof}

\begin{prop} \label{s2sum01}
If $X$ and $Y$ are $R$-complexes,
then there  are isomorphisms
\begin{align}
\label{s2sum01a}
\sw_R(X\oplus Y)&\cong \sw_R(X)\oplus(X\otimes_R Y)\oplus \sw_R(Y)\\
\label{s2sum01b}
\s_R(X\oplus Y)&\cong \s_R(X)\oplus(X\otimes_R Y)\oplus \s_R(Y).
\end{align}
\end{prop}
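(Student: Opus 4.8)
The plan is to decompose the tensor square of $X\oplus Y$ into its four natural summands and to exploit the fact that $\alf{X\oplus Y}$ respects the coarser splitting into a \emph{pure} block $(X\otimes_R X)\oplus(Y\otimes_R Y)$ and a \emph{mixed} block $(X\otimes_R Y)\oplus(Y\otimes_R X)$. Since the cokernel of a block-diagonal morphism between direct sums is the direct sum of the cokernels of the diagonal blocks, this reduces everything to understanding three cokernels, two of which are immediate and one of which is the real computation.

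Concretely, I would first record the canonical isomorphism of $R$-complexes
$$(X\oplus Y)\otimes_R(X\oplus Y)\cong (X\otimes_R X)\oplus(X\otimes_R Y)\oplus(Y\otimes_R X)\oplus(Y\otimes_R Y),$$
and observe, directly from the formula $u\otimes u'\mapsto u\otimes u'-(-1)^{|u||u'|}u'\otimes u$ of Definition~\ref{symm01}, that $\alf{X\oplus Y}$ sends $X\otimes_R X$ to itself, $Y\otimes_R Y$ to itself, and the mixed summand $(X\otimes_R Y)\oplus(Y\otimes_R X)$ to itself. On the first two summands $\alf{X\oplus Y}$ restricts to $\alf X$ and $\alf Y$, whose cokernels are $\sw_R(X)$ and $\sw_R(Y)$ by definition. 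This reduces~\eqref{s2sum01a} to computing the cokernel of $\alf{X\oplus Y}$ on the mixed block.

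The main obstacle, as I see it, is this mixed block. Writing $A=X\otimes_R Y$ and $B=Y\otimes_R X$, and letting $\tau\colon A\to B$ be the commutativity isomorphism $x\otimes y\mapsto(-1)^{|x||y|}y\otimes x$ (a chain isomorphism, so $\tau^{-1}$ exists), the restriction of $\alf{X\oplus Y}$ to $A\oplus B$ is given by $\left(\begin{smallmatrix}1 & -\tau^{-1}\\ -\tau & 1\end{smallmatrix}\right)$. I would then check that the morphism $\pi\colon A\oplus B\to A$ defined by $\pi(a,b)=a+\tau^{-1}(b)$ is surjective, annihilates the image of this matrix, and has kernel exactly equal to that image (the image being the graph $\{(a,-\tau a)\mid a\in A\}$, a complement to $A$ inside $A\oplus B$). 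This identifies $\coker\bigl(\alf{X\oplus Y}|_{A\oplus B}\bigr)\cong A=X\otimes_R Y$, which completes~\eqref{s2sum01a}.

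Finally, for~\eqref{s2sum01b} I would pass to the quotient of Definition~\ref{symm01}. Writing an odd-degree element of $X\oplus Y$ as $w=x+y$ with $x\in X$ and $y\in Y$ of the same odd degree $n$, one has $\ol{w\otimes w}=\ol{x\otimes x}+\ol{x\otimes y}+\ol{y\otimes x}+\ol{y\otimes y}$. Tracking this through the isomorphism above via $\pi$, the mixed contribution lands in $A$ as $\bigl(1+(-1)^{|x||y|}\bigr)\,x\otimes y$, which vanishes because $|x||y|=n^2$ is odd; hence $\ol{w\otimes w}$ maps to $(\ol{x\otimes x},0,\ol{y\otimes y})$. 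Specializing to $y=0$ and then to $x=0$ shows that the submodule generated by these classes corresponds precisely to $\ker(\opro X)\oplus 0\oplus\ker(\opro Y)$, so quotienting the three summands separately produces $\s_R(X)\oplus(X\otimes_R Y)\oplus\s_R(Y)$, as claimed.
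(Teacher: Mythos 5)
Your proposal is correct and takes essentially the same route as the paper's proof: the same four-summand decomposition with $\alf{X\oplus Y}$ block-diagonal (restricting to $\alf X$ and $\alf Y$ on the pure blocks), the same identification of the mixed-block cokernel with $X\otimes_R Y$ via $(a,b)\mapsto a+\tau^{-1}(b)$ (the paper's map $\gamma$, since its $\theta_{YX}$ is your $\tau^{-1}$), and the same diagonal-class computation showing the mixed component of $\ol{w\otimes w}$ vanishes in odd degree for part~\eqref{s2sum01b}. The only cosmetic difference is that you prove injectivity of the induced map by identifying both $\ker(\pi)$ and the image of the mixed block as the graph of $-\tau$, whereas the paper exhibits an explicit section $\delta$ satisfying $\delta\ol\gamma=\id$.
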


\begin{proof}
\eqref{s2sum01a}
Tensor-distribution yields the horizontal isomorphisms
in the following commutative diagram
$$\xymatrix{
(X\oplus Y)\otimes_R (X\oplus Y) \ar[dd]_{\alf{X\oplus Y}} \ar[r]^-{\cong}
& (X\otimes_R X)\oplus (X \otimes_R Y)\oplus (Y \otimes_R X)\oplus (Y \otimes_R Y)
\ar[dd]_{\left(\begin{smallmatrix}
\alf X & 0 & 0 & 0 \\
0 & \id_{X\otimes_R Y} & -\theta_{YX} & 0 \\
0 & -\theta_{XY} & \id_{Y\otimes_R X} & 0 \\
0 & 0 & 0 & \alf Y \end{smallmatrix}\right)}
 \\ \\
(X\oplus Y)\otimes_R (X\oplus Y)
\ar[r]^-{\cong}
& (X\otimes_R X)\oplus (X \otimes_R Y)\oplus (Y \otimes_R X)\oplus (Y \otimes_R Y)
}$$
where $\theta_{UV}\colon U\otimes_R V\to V\otimes_R U$
is the tensor-commutativity isomorphism given by
$u\otimes v\mapsto (-1)^{|u||v|}v\otimes u$.
This diagram yields the first isomorphism in the following sequence
while the first equality is by definition
\begin{align*}
\sw_R(X\oplus Y)
&=\coker(\alf{X\oplus Y}) \\
&\cong\coker\left(\begin{smallmatrix}
\alf X & 0 & 0 & 0 \\
0 & \id_{X\otimes_R Y} & -\theta_{YX} & 0 \\
0 & -\theta_{XY} & \id_{Y\otimes_R X} & 0 \\
0 & 0 & 0 & \alf Y \end{smallmatrix}\right)\\
&\cong  \coker(\alf X)\oplus \coker\left(\begin{smallmatrix}
\id_{X\otimes_R Y} & -\theta_{YX}  \\
-\theta_{XY} & \id_{Y\otimes_R X}  \end{smallmatrix}\right)\oplus \coker(\alf Y) \\
& \cong \sw_R(X) \oplus(X\otimes_R Y)\oplus \sw_R(Y).
\end{align*}
The second isomorphism is by elementary linear algebra. For the
third isomorphism, using the definition of $\sw_R(-)$, we only need to
prove  $\coker(\beta) \cong X\otimes_R Y$ where
$$\beta=\left(\begin{smallmatrix}
\id_{X\otimes_R Y} & -\theta_{YX}  \\
-\theta_{XY} & \id_{Y\otimes_R X}  \end{smallmatrix}\right)\colon (X
\otimes_R Y)\oplus (Y \otimes_R X)\to (X \otimes_R Y)\oplus (Y
\otimes_R X).$$ 
We set
$$\gamma=(\id_{X\otimes_RY} \,\,\,\,  \theta_{YX})
\colon (X \otimes_R Y)\oplus (Y \otimes_R
X)\to X \otimes_R Y$$ 
which is a surjective morphism such that
$\im(\beta)\subseteq\ker(\gamma)$. Thus, there is a well-defined
surjective morphism $\ol\gamma\colon\coker(\beta)\to X\otimes_RY$
given by
$$\ol{\left(\begin{smallmatrix} x\otimes y \\ y'\otimes
x'\end{smallmatrix}\right)}\mapsto x\otimes
y+(-1)^{|x'||y'|}x'\otimes y'.$$ It remains to show that $\ol\gamma$
is injective. To this end, define $\delta\colon
X\otimes_RY\to\coker(\beta)$ by the formula
$x\otimes y\mapsto
\ol{\left(\begin{smallmatrix} x\otimes y \\
0 \end{smallmatrix}\right)}$. It is straightforward to show that
$\delta$ is a well-defined morphism and that
$\delta\ol\gamma=\id_{\coker(\beta)}$. It follows that $\ol\gamma$
is injective, hence an isomorphism, as desired.

\eqref{s2sum01b}
The isomorphism 
$\beta\colon\sw_R(X\oplus Y)\xra\cong \sw_R(X)\oplus(X\otimes_R Y)\oplus \sw_R(Y)$
from part~\eqref{s2sum01a} is given by the formula
$$\beta\left(\ol{(x,y)\otimes(x',y')}\right)
=\left(\ol{x\otimes x'},x\otimes y'+(-1)^{|x'||y|}x'\otimes y,\ol{y\otimes y'}\right).$$
Thus, for an element $(x,y)\in X\oplus Y$ of odd order
$|x|=|(x,y)|=|y|$, we have
\begin{align*}
\beta\left(\ol{(x,y)\otimes(x,y)}\right)
&=\left(\ol{x\otimes x},x\otimes y+(-1)^{|x||y|}x\otimes y,\ol{y\otimes y}\right)\\
&=\left(\ol{x\otimes x},x\otimes y-x\otimes y,\ol{y\otimes y}\right)\\
&=\left(\ol{x\otimes x},0,\ol{y\otimes y}\right).
\end{align*}
It follows that 
\begin{align*}
\s_R(X\oplus Y)\hspace{-1cm}\\
&\cong\frac{\sw_R(X)\oplus(X\otimes_R Y)\oplus \sw_R(Y)}{\left\langle\left.
\left(\ol{x\otimes x},0,\ol{y\otimes y}\right)\right|\text{$x\in X$ and $y\in Y$ have odd degree}\right\rangle}\\
&\cong\frac{\sw_R(X)}{\left\langle\left.
\ol{x\otimes x}\right|\text{$x\in X$ odd degree}\right\rangle}
\oplus
\frac{(X\otimes_R Y)}{0}
\oplus 
\frac{\sw_R(Y)}{\left\langle\left.
\ol{y\otimes y}\right|\text{$y\in Y$ odd degree}\right\rangle}
\\
&\cong \s_R(X)\oplus(X\otimes_R Y)\oplus \s_R(Y).
\end{align*}
as desired.
\end{proof}

Example~\ref{symm045} shows why we must assume that 
2 is a unit in $R$ in the next result.

\begin{prop} \label{symm05}
Assume that 2 is a unit in $R$, and let $X$ be an $R$-complex.
\begin{enumerate}[\quad\rm(a)]
\item \label{symm05item0}
The following exact sequences are split exact
\begin{gather*}
0\to\ker(\alf X)
\xra{\jinj X}X\otimes_R X
\xra{\qpro X}\im(\alf X) \to 0
\\
0\to\im(\alf X)
\xra{\iinj X}X\otimes_R X
\xra{\ppro X}\s_R(X) \to 0 
\end{gather*}
where $\iinj X$ and $\jinj X$ are the natural inclusions,
$\ppro X$ is the natural surjection,
and $\qpro X$ is induced by $\alf X$.
The splitting on the right of the first sequence is given by
$\frac{1}{2}\iinj X$, and the splitting
on the left of the second
sequence is given by $\frac{1}{2}\qpro X$.
In particular, there are isomorphisms
$$\im(\alf X)\oplus \ker(\alf X)
\cong X\otimes_R X\cong\im(\alf X)\oplus \s_R(X).$$
\item \label{symm05item1}
If $X$ is a 
bounded-below
complex of projective $R$-modules,
then so are the complexes $\im(\alf X)$, $\ker(\alf X)$ and $\s_R(X)$.
\end{enumerate}
\end{prop}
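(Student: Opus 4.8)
Let me prove the two parts in order, relying on Properties~\ref{symm04}.

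The plan is to first establish part~\eqref{symm05item0} using the idempotent supplied by~\eqref{symm04item5}, then deduce part~\eqref{symm05item1} as an immediate corollary via the resulting direct-sum decompositions. The key observation, recorded in~\eqref{symm04item5}, is that since $2$ is a unit the morphism $e := \frac{1}{2}\alf X \colon X\otimes_R X \to X\otimes_R X$ is idempotent, i.e.\ $e^2 = e$. An idempotent endomorphism of a complex automatically splits the complex: I would set out the standard fact that $X\otimes_R X \cong \ker(e)\oplus \im(e)$ as complexes, with $\im(e) = \im(\alf X)$ (the scalar $\tfrac12$ does not change the image) and $\ker(e) = \ker(\alf X)$ (again unaffected by the unit scalar). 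This single decomposition is really the engine behind everything.

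For the first exact sequence, I would verify that $\qpro X$ is surjective onto $\im(\alf X)$ by construction and that $\jinj X$ is the inclusion of the kernel, so exactness is immediate; the content is the splitting. I claim the proposed splitting $\tfrac12\iinj X \colon \im(\alf X)\to X\otimes_R X$ works: for $w\in\im(\alf X)$, write $w = \alf X(v)$, and compute $\qpro X\bigl(\tfrac12 \iinj X(w)\bigr) = \tfrac12 \alf X(w)$; using $\alf X(\alf X(v)) = 2\alf X(v)$ — which is exactly the relation $e^2=e$ unwound, coming from $\alf X^2 = 2\alf X$ — this equals $\tfrac12\cdot 2 w = w$, giving $\qpro X\circ\tfrac12\iinj X = \id_{\im(\alf X)}$. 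For the second sequence, exactness at the middle is the statement $\im(\iinj X)=\ker(\ppro X)$, which holds because $\s_R(X)\cong\coker(\alf X)$ under the unit hypothesis (by~\eqref{symm04item5} the natural map $\opro X$ identifies $\sw_R(X)=\coker(\alf X)$ with $\s_R(X)$). The left splitting $\tfrac12\qpro X$ satisfies $\tfrac12\qpro X\circ\iinj X = \id_{\im(\alf X)}$ by the same relation $\alf X^2 = 2\alf X$. The displayed isomorphisms $\im(\alf X)\oplus\ker(\alf X)\cong X\otimes_R X\cong \im(\alf X)\oplus\s_R(X)$ then follow formally from the two split sequences.

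For part~\eqref{symm05item1}, suppose $X$ is a bounded-below complex of projective $R$-modules. Then by Definition~\ref{tp03} each $(X\otimes_R X)_n = \bigoplus_p X_p\otimes_R X_{n-p}$ is a (finite, by boundedness below) direct sum of tensor products of projectives, hence projective, and $X\otimes_R X$ is bounded below; so $X\otimes_R X$ is a bounded-below complex of projectives. The split exact sequences from~\eqref{symm05item0} exhibit $\im(\alf X)$, $\ker(\alf X)$, and $\s_R(X)$ degreewise as direct summands of the projective modules $(X\otimes_R X)_n$, and a direct summand of a projective module is projective; boundedness below is inherited from $X\otimes_R X$. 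This gives the claim.

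The main obstacle — really the only nonformal point — is pinning down the identity $\alf X\circ\alf X = 2\,\alf X$ (equivalently $e^2=e$) at the level of the morphism, rather than just citing idempotence abstractly; once that algebraic relation is in hand, both splitting verifications and the summand argument in~\eqref{symm05item1} are routine. I would derive it directly from the generator formula for $\alf X$ in Definition~\ref{symm01}: applying $\alf X$ twice to $x\otimes x'$ and using $\bigl((-1)^{|x||x'|}\bigr)^2 = 1$ collapses the four resulting terms to $2\bigl(x\otimes x' - (-1)^{|x||x'|}x'\otimes x\bigr)$.
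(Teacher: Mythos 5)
Your proposal is correct and follows essentially the same route as the paper: both arguments rest on the idempotence of $\frac{1}{2}\alf X$ from Property~\eqref{symm04item5} (your explicit verification of $\alf X\circ\alf X=2\,\alf X$ is exactly the computation underlying that cited property), use it to produce the splittings $\frac{1}{2}\iinj X$ and $\frac{1}{2}\qpro X$, and then deduce part~\eqref{symm05item1} from the resulting degreewise direct-sum decompositions of the bounded-below projective complex $X\otimes_R X$. The only difference is one of detail, not of method.
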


\begin{proof}
\eqref{symm05item0}
The given exact sequences come from Properties~\eqref{symm04item5}
and~\eqref{symm04item1}.
The fact that $\frac{1}{2}\alf X$ is idempotent tells us that
$\iinj X$ is a split injection with splitting given by $\frac{1}{2}\qpro X$
and $\qpro X$ is a split surjection with splitting
given by
$\frac{1}{2}\iinj X$.
The desired isomorphisms
follow immediately from the splitting of the sequences.

\eqref{symm05item1}
With the isomorphisms from part~\eqref{symm05item0},
the fact that $X\otimes_R X$ is a bounded-below
complex of projective $R$-modules
implies
that $\im(\alf X)$, $\ker(\alf X)$ and $\s_R(X)$ are also bounded-below
complexes of projective $R$-modules.
\end{proof}

The next two results  explicitly
describe  the modules in $\sw_R(X)$ and $\s_R(X)$.
Note that the difference between parts~\eqref{betti01item1}--\eqref{betti01item2}
and part~\eqref{betti01item3} shows that the behavior documented in
Example~\ref{symm045} is, in a sense, the norm, not the exception.

\begin{thm} \label{betti01}
Let $X$ be a  complex
of $R$-modules.
Fix an integer $n$ and set $h=n/2$ and
$V = \bigoplus_{m<h}  (X_m\otimes X_{n-m})$.  
\begin{enumerate}[\quad\rm(a)]
\item \label{betti01item1}
If $n$ is odd, then $\sw_R(X)_n\cong V$.
\item \label{betti01item2}
If $n\equiv 0\pmod{4}$, then 
$\sw_R(X)_n
\cong V
\bigoplus \s_R(X_h)$.
\item \label{betti01item3}
Assume that $n\equiv 2\pmod{4}$.
\begin{enumerate}[\rm(c1)]
\item \label{betti01item3a}
There is an isomorphism
\begin{align*}
\sw_R(X)_n
&\cong V
\bigoplus \frac{X_h\otimes_R X_h}{\langle
x\otimes x'+x'\otimes x
\mid x,x'\in X_h \rangle }
\end{align*}
and there is a surjection 
$\tau\colon \sw_R(X)_n\to V\oplus\wedge^2(X_h)$
with 
$\ker(\tau)$ generated by $\{\ol{x\otimes x}\in\sw_R(X)_n\mid x\in X_h\}$.
\item \label{betti01item3b}
If $X_h$ is  projective, then 
$\sw_R(X)_n
\cong V\bigoplus\wedge^2(X_h)
\bigoplus K$ for some $R$-module $K$ that is a homomorphic image of $X_h/2X_h$.
\item \label{betti01item3c}
If $X_h$ is projective and 2 is a unit in $R$, then 
$\sw_R(X)_n
\cong V\bigoplus\wedge^2(X_h)$.
\end{enumerate}
\end{enumerate}
\end{thm}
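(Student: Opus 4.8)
The plan is to compute $\sw_R(X)_n$ directly, as the degree-$n$ component of $\coker(\alf X)$, by exploiting the fact that $\alf X$ respects the decomposition $(X\otimes_R X)_n=\bigoplus_p X_p\otimes_R X_{n-p}$ into blocks indexed by the unordered pairs $\{p,\,n-p\}$. Indeed, for $x\in X_p$ and $x'\in X_{n-p}$ the element $\alf X(x\otimes x')=x\otimes x'-(-1)^{p(n-p)}x'\otimes x$ lies in $(X_p\otimes_R X_{n-p})\oplus(X_{n-p}\otimes_R X_p)$, so in degree $n$ the map $\alf X$ splits as a direct sum of maps on these two-term blocks, together with a single map on the middle term $X_h\otimes_R X_h$ when $n$ is even. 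Since cokernel commutes with this internal direct sum, it suffices to analyze each block in isolation and then reassemble.

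For a pair with $p<h$ (so $p\neq n-p$) the restriction of $\alf X$ to $(X_p\otimes_R X_{n-p})\oplus(X_{n-p}\otimes_R X_p)$ is exactly the map $\left(\begin{smallmatrix}\id & -\theta\\ -\theta & \id\end{smallmatrix}\right)$ already treated in the proof of Proposition~\ref{s2sum01}, whose cokernel is canonically $X_p\otimes_R X_{n-p}$. Summing over all $p<h$ produces the summand $V=\bigoplus_{m<h}X_m\otimes_R X_{n-m}$ in every case. In particular, when $n$ is odd there is no middle term, and this already yields part~\eqref{betti01item1}.

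When $n$ is even the middle block contributes $\coker\bigl(\alf X|_{X_h\otimes_R X_h}\bigr)$, where $\alf X(x\otimes x')=x\otimes x'-(-1)^{h^2}x'\otimes x$; the governing sign is decided by the parity of $h$. If $n\equiv 0\pmod 4$ then $h$ is even and $(-1)^{h^2}=1$, so the cokernel is $\frac{X_h\otimes_R X_h}{\langle x\otimes x'-x'\otimes x\rangle}=\s_R(X_h)$ (the module second symmetric power, via Example~\ref{symm04item4}), giving~\eqref{betti01item2}. If $n\equiv 2\pmod 4$ then $h$ is odd and $(-1)^{h^2}=-1$, so the cokernel is $M:=\frac{X_h\otimes_R X_h}{\langle x\otimes x'+x'\otimes x\rangle}$, which is the middle summand in~\eqref{betti01item3a}. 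The polarization identity $x\otimes x'+x'\otimes x=(x+x')\otimes(x+x')-x\otimes x-x'\otimes x'$ shows $\langle x\otimes x'+x'\otimes x\rangle\subseteq\langle x\otimes x\rangle$, so there is a natural surjection $\tau\colon M\twoheadrightarrow\wedge^2(X_h)$ whose kernel $\Gamma$ is generated by the classes $\ol{x\otimes x}$; extending $\tau$ by the identity on $V$ establishes~\eqref{betti01item3a}.

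Finally, parts~\eqref{betti01item3b} and~\eqref{betti01item3c} come from analyzing $\Gamma=\ker\tau$. If $X_h$ is projective then so is $\wedge^2(X_h)$, hence $0\to\Gamma\to M\to\wedge^2(X_h)\to 0$ splits and $M\cong\wedge^2(X_h)\oplus\Gamma$; thus one takes $K=\Gamma$. The relation $2\,\ol{x\otimes x}=\ol{x\otimes x'+x'\otimes x}|_{x'=x}=0$ in $M$ shows $\Gamma$ is annihilated by $2$ and generated by the images of the $x\otimes x$; for $X_h$ free of rank $r$ an explicit basis computation identifies $\Gamma\cong X_h/2X_h$, and the general projective case reduces to the free one by writing $X_h$ as a direct summand of a free module and using that $M(-)$, $\wedge^2(-)$, and hence $\Gamma(-)$ are additive functors, realizing $K$ as a homomorphic image of $X_h/2X_h$. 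This gives~\eqref{betti01item3b}. For~\eqref{betti01item3c}, when $2$ is a unit one has $x\otimes x=\tfrac12\cdot 2(x\otimes x)\in\langle x\otimes x'+x'\otimes x\rangle$, so $\langle x\otimes x'+x'\otimes x\rangle=\langle x\otimes x\rangle$, forcing $\Gamma=0$ and $M\cong\wedge^2(X_h)$ (consistent with Properties~\eqref{symm04item5}). The main obstacle is precisely this identification of $\Gamma$ in~\eqref{betti01item3b}: checking it is generated by $2$-torsion classes is immediate, but pinning it down as a homomorphic image of $X_h/2X_h$ (rather than of a larger free reduction) is where the splitting and the additivity under direct sums are needed, and where the projectivity hypothesis is essential; the off-diagonal blocks, by contrast, are routine once the block decomposition of $\alf X$ is in hand.
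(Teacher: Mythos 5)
Parts~\eqref{betti01item1}, \eqref{betti01item2}, \eqref{betti01item3a} and~\eqref{betti01item3c} of your argument are essentially the paper's proof. Your decomposition of $(X\otimes_RX)_n$ into blocks indexed by unordered pairs $\{p,n-p\}$ is exactly what the paper's isomorphisms $\gamma$ and $\gamma'$ accomplish, identifying $\alf X_n$ with $g$ (resp.\ $g'=g\oplus\wti\alpha$); the computation of each off-diagonal cokernel as $X_p\otimes_RX_{n-p}$ is the same two-term calculation the paper performs via $\coker(g)\cong V$ (and which you import from the proof of Proposition~\ref{s2sum01}); the sign analysis $(-1)^{h^2}=(-1)^h$ for the middle block, the identification of its cokernel with $\s_R(X_h)$ or with $X_h\otimes_RX_h/\langle x\otimes x'+x'\otimes x\rangle$, and the surjection $\tau$ obtained from the polarization inclusion all match the paper (you make explicit the polarization identity where the paper says ``straightforward''). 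Your~\eqref{betti01item3c} is a mild shortcut, proving $\langle x\otimes x'+x'\otimes x\rangle=\langle x\otimes x\rangle$ directly when $2$ is a unit instead of deducing it from~\eqref{betti01item3b} via $X_h/2X_h=0$; that is fine.

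The genuine gap is in~\eqref{betti01item3b}, exactly at the point you flag in your closing sentence. Your free-case computation is correct: with a basis $\{e_i\}$ the relation submodule is generated by the elements $e_i\otimes e_j+e_j\otimes e_i$, so $\Gamma=\ker(\tau_1)=\bigoplus_i(R/2R)\,\ol{e_i\otimes e_i}\cong X_h/2X_h$. But the reduction from projective to free does not deliver the claim. Additivity of $\Gamma(-)$ applied to $F=X_h\oplus Q$ gives only $\Gamma(X_h)\oplus\Gamma(Q)\cong F/2F$, from which you can conclude that $\Gamma(X_h)$ is a homomorphic image of $F/2F$ --- precisely the ``larger free reduction'' you acknowledge must be avoided --- and no direct-sum cancellation is available in general. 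Nor can you restrict the free-case isomorphism to the summand: the map $F/2F\to\Gamma(F)$, $\ol{e_i}\mapsto\ol{e_i\otimes e_i}$, is basis-dependent and non-natural, since under a change of basis $e_j'=\sum_ic_{ij}e_i$ one has $\ol{e_j'\otimes e_j'}=\sum_ic_{ij}^2\,\ol{e_i\otimes e_i}$, with \emph{squared} coefficients; so the composite $X_h/2X_h\hookrightarrow F/2F\cong\Gamma(F)\onto\Gamma(X_h)$ need not be surjective. The paper avoids any free reduction: it works directly with the map $\pi\colon X_h\to\ker(\tau_1)$, $x\mapsto\ol{x\otimes x}$, which is additive because the cross terms $x\otimes y+y\otimes x$ vanish in $\coker(\wti\alpha)$, whose image generates $\ker(\tau_1)$ by~\eqref{betti01a}, and which satisfies $2X_h\subseteq\ker(\pi)$, and concludes from this that $K$ is a homomorphic image of $X_h/2X_h$. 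To repair your argument, either establish the naturality your restriction step needs (note that even for the direct map one has $\pi(rx)=r^2\,\ol{x\otimes x}$, so $R$-linearity requires care) or adopt the paper's direct route.
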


\begin{proof}
\eqref{betti01item1}
Assume that $n$ is odd.
Let $\gamma\colon(X\otimes X)_n \to V\oplus V$
be given on generators by the formula
$$\gamma(x\otimes x')=\begin{cases}
(x\otimes x',0) & \text{if $|x|<h$} \\
(0,x'\otimes x) & \text{if $|x|>h$.} 
\end{cases}$$
Since $n$ is odd, this is a well-defined  isomorphism.
Let $g\colon V\oplus V\to V\oplus V$ be given by
$g(v,v')=(v-v',v'-v)$.
This yields a commutative diagram
\begin{equation}\label{betti01item1a}
\begin{split} 
\xymatrix{
(X\otimes_R X)_n \ar[r]^{\alf X_n} \ar[d]_{\cong}^\gamma
&(X\otimes_R X)_n \ar[d]_{\cong}^\gamma \\
V\oplus V \ar[r]^g & V\oplus V.
}
\end{split}
\end{equation}
Note that the commutativity depends on the fact that $n$ is odd,
because it implies that $|x||x'|$ is even for each  $x\otimes x'\in (X\otimes_R X)_n$.

The map $f\colon V\oplus V \rightarrow V$ given by $f(v, v') =
v +v'$ is a surjective homomorphism with $\ker(f)=\langle
(v,0) - (0, v) \mid v\in V\rangle = \im(g)$.  This explains the last 
isomorphism in the next sequence
$$\sw_R(X)_n = \coker(\alf X_n)\cong\coker(g)\cong V.$$
The other isomorphism follows from diagram~\eqref{betti01item1a}.

\eqref{betti01item2}--\eqref{betti01item3}
When $n$ is even, we have a similar commutative diagram
\begin{equation}\label{betti01item2a}
\begin{split} 
\xymatrix{
(X\otimes_R X)_n \ar[r]^{\alf X_n} \ar[d]_{\cong}^{\gamma'}
&(X\otimes_R X)_n \ar[d]_{\cong}^{\gamma'} \\
V\oplus V\oplus(X_h\otimes X_h) \ar[r]^{g'} & V\oplus V\oplus(X_h\otimes X_h).
}
\end{split}
\end{equation}
where $\gamma'$ and $g'$ are  given by
\begin{align*}
\gamma'(x\otimes x')
&=\begin{cases}
(x\otimes x',0,0) & \text{if $|x|<h$} \\
(0,x'\otimes x,0) & \text{if $|x|>h$} \\
(0,0,x\otimes x') & \text{if $|x|=h$.} 
\end{cases} \\
g'(v,v',x\otimes x')
&=(v-v',v'-v,x\otimes x'-(-1)^{h^2}x'\otimes x)\\
&=(v-v',v'-v,x\otimes x'-(-1)^{h}x'\otimes x).
\end{align*}
In other words, we have $g'=g\oplus\wti\alpha$ where
$\wti\alpha\colon X_h\otimes_R X_h\to X_h\otimes_R X_h$
is given  by
$$\wti\alpha(x\otimes x'):=x\otimes x'-(-1)^{h}x'\otimes x.$$
The following sequence of isomorphisms follows directly
\begin{align*}
\sw_R(X)_n 
&= \coker(\alf X_n)
\cong\coker(g')
\cong \coker(g)\oplus\coker(\wti\alpha)\cong V\oplus\coker(\wti\alpha).
\end{align*}
If $n\equiv 0\pmod{4}$, then $h$ is even, so we have
$$\coker(\wti\alpha)\cong \frac{X_h\otimes_R X_h}{\langle
        x\otimes x'-x'\otimes x \mid x,x'\in X_h \rangle }\cong\s_R(X_h).$$
For the remainder of the proof, we assume that $n\equiv 2\pmod{4}$,
that is, that $h$ is odd. In this case, we have
\begin{equation}\label{betti01b}
\coker(\wti\alpha)
\cong \frac{X_h\otimes_R X_h}{\langle
        x\otimes x'+x'\otimes x
        \mid x,x'\in X_h \rangle }.
\end{equation}
It is straightforward to show that 
$$\langle
        x\otimes x'+x'\otimes x
        \mid x,x'\in X_h \rangle
        \subseteq \langle
        x\otimes x
        \mid x\in X_h \rangle.
$$
Hence, there is an epimorphism
$$\tau_1\colon \coker(\wti\alpha)\to \frac{X_h\otimes_R X_h}{\langle
        x\otimes x
        \mid x\in X_h \rangle }\cong\wedge^2(X_h)$$
such that
\begin{equation}\label{betti01a}
\ker(\tau_1)=\langle \ol{x\otimes x}\in\coker(\wti\alpha) \mid x\in X_h\rangle
\cong\langle \ol{x\otimes x}\in\sw_R(X)_n\mid x\in X_h\rangle.
\end{equation}
The conclusions of part~\eqref{betti01item3a} follow from setting $\tau=\id_V\oplus \tau_1$.

For the rest of the proof, we assume that $X_h$ is projective.
It follows that $\wedge^2(X_h)$ is also projective, hence
the surjection $\tau_1$ splits. Setting $K=\ker(\tau_1)$,
we have $\sw_R(X)_n\cong V\bigoplus \wedge^2(X_h)\oplus K$.
Using~\eqref{betti01b} and~\eqref{betti01a}
we see that the map
$\pi\colon X_h\to\ker(\tau_1)$ given by $x\mapsto\ol{x\otimes x}$
is surjective with $2X_h\subseteq\ker(\pi)$. It follows that $K$ is a homomorphic 
image of $X_h/2X_h$, which establishes part~\eqref{betti01item3b}.
Finally, part~\eqref{betti01item3c} follows directly from~\eqref{betti01item3b}:
if 2 is a unit in $R$, then $X_h/2X_h=0$.
\end{proof}

\begin{thm} \label{betti01'}
Let $X$ be a  complex
of $R$-modules.
Fix an integer $n$ and set $h=n/2$ and
$V = \bigoplus_{m<h}  (X_m\otimes X_{n-m})$.  
\begin{enumerate}[\quad\rm(a)]
\item \label{betti01'item1}
If $n$ is odd, then $\s_R(X)_n\cong V$.
\item \label{betti01'item2}
If $n\equiv 0\pmod{4}$, then 
$\s_R(X)_n
\cong V
\bigoplus \s_R(X_h)$.
\item \label{betti01'item3}
If $n\equiv 2\pmod{4}$, then 
$\s_R(X)_n
\cong V\bigoplus\wedge^2(X_h)$.
\end{enumerate}
\end{thm}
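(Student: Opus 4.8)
The plan is to derive Theorem~\ref{betti01'} as an immediate consequence of Theorem~\ref{betti01} together with the defining short exact sequence relating $\sw_R(-)$ and $\s_R(-)$. Recall from Definition~\ref{symm01} that in each degree $n$ the module $\s_R(X)_n$ is the quotient of $\sw_R(X)_n$ by the submodule $\langle\ol{x\otimes x}\mid |x|\text{ odd}\rangle$. Since degree $n$ involves only those generators $\ol{x\otimes x}$ with $x\in X_h$ and $|x|=h$ odd, this quotienting is nontrivial precisely when $h=n/2$ is an odd integer, i.e.\ when $n\equiv 2\pmod 4$; in all other cases the submodule meets $\sw_R(X)_n$ trivially and we get $\s_R(X)_n\cong\sw_R(X)_n$.

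Concretely, I would split into the same three cases as Theorem~\ref{betti01}. For part~\eqref{betti01'item1}, when $n$ is odd there are no degree-$h$ generators at all (as $h$ is not even an integer), so the quotient is trivial and $\s_R(X)_n\cong\sw_R(X)_n\cong V$ by Theorem~\ref{betti01}\eqref{betti01item1}. For part~\eqref{betti01'item2}, when $n\equiv 0\pmod 4$ the integer $h$ is even, so every odd-degree generator again avoids degree $h$, giving $\s_R(X)_n\cong\sw_R(X)_n\cong V\oplus\s_R(X_h)$ by Theorem~\ref{betti01}\eqref{betti01item2}. The only substantive case is part~\eqref{betti01'item3}, where $n\equiv 2\pmod 4$ and $h$ is odd.

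For the remaining case I would invoke the surjection $\tau\colon\sw_R(X)_n\to V\oplus\wedge^2(X_h)$ from Theorem~\ref{betti01}\eqref{betti01item3a}, whose kernel is exactly $\langle\ol{x\otimes x}\in\sw_R(X)_n\mid x\in X_h\rangle$. Since $h$ is odd, this kernel is precisely the degree-$n$ part of the submodule defining $\s_R(X)$, so passing to the cokernel of $\opro X$ in degree $n$ identifies $\s_R(X)_n$ with the image of $\tau$, namely $V\oplus\wedge^2(X_h)$. This yields part~\eqref{betti01'item3} directly, without any projectivity hypothesis on $X_h$, in contrast to the corresponding statements for $\sw_R(X)$.

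The main obstacle, such as it is, lies in verifying carefully that the submodule $\langle\ol{x\otimes x}\mid|x|\text{ odd}\rangle$ intersected with degree $n$ equals exactly $\ker(\tau)$, rather than something potentially larger coming from generators in other degrees. One must check that an odd-degree element $x\in X$ contributes a generator $\ol{x\otimes x}$ living in degree $2|x|$, so its contribution to degree $n$ forces $|x|=h$, and that $h$ odd is equivalent to $n\equiv 2\pmod 4$. Once this degreewise bookkeeping is pinned down, the three cases follow mechanically from the two already-proved facts, so I expect the proof to be short.
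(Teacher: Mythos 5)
Your proposal is correct and takes essentially the same route as the paper's own proof: the paper likewise sets $Y=\left\langle\ol{x\otimes x}\mid\text{$|x|$ odd}\right\rangle\subseteq\sw_R(X)$, notes that $Y_n=0$ when $n$ is odd or $n\equiv 0\pmod{4}$ so that $\s_R(X)_n\cong\sw_R(X)_n$ and Theorem~\ref{betti01} applies, and for $n\equiv 2\pmod{4}$ identifies $Y_n$ with $\ker(\tau)$ from Theorem~\ref{betti01}\eqref{betti01item3a} to conclude $\s_R(X)_n\cong V\oplus\wedge^2(X_h)$. The degreewise bookkeeping you flag (each generator $\ol{x\otimes x}$ is homogeneous of degree $2|x|$, so it meets degree $n$ only when $|x|=h$ is odd) is exactly the observation underlying the paper's argument, and your proof needs no projectivity hypothesis, just as the paper's does not.
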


\begin{proof}
Set $Y=\left\langle\left.
\ol{x\otimes x}\in \sw_R(X)\right|\text{$x\in X$ odd degree}\right\rangle
\subseteq \sw_R(X)$.

\eqref{betti01'item1}--\eqref{betti01'item2}
If $n$ is odd or $n\equiv 0\pmod{4}$, then  $Y_n=0$; hence
$\s_R(X)_n\cong\sw_R(X)_n$, and the desired conclusions follow from 
Theorem~\ref{betti01}\eqref{betti01item1}--\eqref{betti01item2}.

\eqref{betti01'item3}
Assume that $n\equiv 2\pmod{4}$. The surjection 
$\tau\colon \sw_R(X)_n\to V\oplus\wedge^2(X_h)$
from Theorem~\ref{betti01}\eqref{betti01item3a}
has 
$\ker(\tau)$ generated by $\{\ol{x\otimes x}\in\sw_R(X)_n\mid x\in X_h\}$;
that is $\ker(\tau)=Y_n$, so we have
$$
V\bigoplus\wedge^2(X_h)
\cong \sw_R(X)_n/Y_n\cong
\s_R(X)_n
$$
as desired.
\end{proof}

We state the next result for $\s_R(X)$ only, because
Theorem~\ref{betti01} shows that it is only reasonable to consider such
formulas for $\sw_R(X)$ when 2 is a unit; in this case
the formulas are the same because of the isomorphism
$\sw_R(X)\cong\s_R(X)$.

\begin{cor} \label{betti011}
Let $X$ be a bounded-below complex
of finite rank free $R$-modules.
For each integer $l$, set $r_l=\rank_R(X_l)$.
Then each $R$-module $\s_R(X)_n$ is free and 
$$\rank_R((\s_R(X)_n)
=\begin{cases}
\displaystyle\sum_{m<h}r_mr_{n-m} & \text{if $n$ is odd} \\
\binom{r_h+1}{2}+ \displaystyle\sum_{m<h}r_mr_{n-m} & \text{if $n\equiv 0\pmod{4}$} \\
\binom{r_h}{2}+ \displaystyle\sum_{m<h}r_mr_{n-m} & \text{if $n\equiv 2\pmod{4}$.}
\end{cases}
$$
\end{cor}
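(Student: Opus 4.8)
The plan is to read off the rank of each free module $\s_R(X)_n$ directly from the structural description in Theorem \ref{betti01'}, using only elementary facts about ranks of free modules under the three operations $\oplus$, $\otimes_R$, and $\wedge^2$. Since $X$ is a bounded-below complex of finite rank free modules, each tensor product $X_m \otimes_R X_{n-m}$ is free of rank $r_m r_{n-m}$, and each $X_h$ is free of rank $r_h$, so $\wedge^2(X_h)$ is free of rank $\binom{r_h}{2}$; hence in every case $\s_R(X)_n$ is a finite direct sum of finite rank free modules and is therefore itself free, as asserted. This settles the freeness claim and reduces the rank computation to a bookkeeping exercise in each of the three congruence classes of $n$.

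First I would handle the common summand $V = \bigoplus_{m<h}(X_m \otimes_R X_{n-m})$, which appears in all three cases of Theorem \ref{betti01'}; since the tensor product of free modules of ranks $r_m$ and $r_{n-m}$ is free of rank $r_m r_{n-m}$, additivity of rank over direct sums gives $\rank_R(V) = \sum_{m<h} r_m r_{n-m}$. This accounts for the sum $\sum_{m<h} r_m r_{n-m}$ present in every branch. For $n$ odd, Theorem \ref{betti01'}\eqref{betti01'item1} gives $\s_R(X)_n \cong V$, so the rank is exactly $\sum_{m<h} r_m r_{n-m}$, matching the first branch.

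Next I would treat the two even cases. For $n \equiv 0 \pmod 4$, Theorem \ref{betti01'}\eqref{betti01'item2} gives $\s_R(X)_n \cong V \oplus \s_R(X_h)$, where $X_h$ is free of rank $r_h$; here $\s_R(X_h)$ is the ordinary second symmetric power of a free module, which is free of rank $\binom{r_h+1}{2}$, producing the second branch. For $n \equiv 2 \pmod 4$, Theorem \ref{betti01'}\eqref{betti01'item3} gives $\s_R(X)_n \cong V \oplus \wedge^2(X_h)$ with $\rank_R(\wedge^2(X_h)) = \binom{r_h}{2}$, producing the third branch. Adding $\rank_R(V)$ in each case yields the displayed formula.

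The only point requiring a small remark, rather than genuine difficulty, is the identification of $\rank_R(\s_R(X_h))$ with $\binom{r_h+1}{2}$ and of $\rank_R(\wedge^2(X_h))$ with $\binom{r_h}{2}$ for a free module of rank $r_h$. These are standard facts about the second symmetric power and second exterior power of a finite rank free module, and by Example \ref{symm04item4} the complex-level $\s_R$ of a module agrees with its module-level second symmetric power, so no subtlety about the chain-complex definition intervenes. Thus the main obstacle is essentially notational—correctly matching Theorem \ref{betti01'}'s case division to the stated binomial coefficients—and there is no serious analytic or homological content beyond invoking the already-established structure theorem.
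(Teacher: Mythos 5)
Your proposal is correct and follows essentially the same route as the paper: the published proof likewise invokes Theorem~\ref{betti01'} and computes the ranks via $V\cong\bigoplus_{m<h}R^{r_mr_{n-m}}$, $\s_R(X_h)\cong R^{\binom{r_h+1}{2}}$, and $\wedge^2(X_h)\cong R^{\binom{r_h}{2}}$. Your additional remarks (finiteness of the sum defining $V$, and the agreement of module-level and complex-level $\s_R$ via Example~\ref{symm04item4}) are consistent with, and slightly more explicit than, the paper's argument.
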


\begin{proof}
Using the notation of Theorem~\ref{betti01'} we have
$$V = \bigoplus_{m<h}  (X_m\otimes X_{n-m})
\cong \bigoplus_{m<h}  (R^{r_m}\otimes R^{r_{n-m}})
\cong \bigoplus_{m<h}  R^{r_mr_{n-m}}
$$
and, when $n$ is even
\begin{align*}
\s_R(X_h)
&\cong \s_R(R^{r_h})\cong R^{\binom{r_h+1}{2}}
&\wedge^2(X_h)
&\cong\wedge^2(R^{r_h})\cong R^{\binom{r_h}{2}}.
\end{align*}
The desired formula now follows from Theorem~\ref{betti01'}.
\end{proof}

\begin{disc} \label{betti02}
There are several ways to present the formula in
Corollary~\ref{betti011}.  One other way to write it is the following:
$$\rank_R((\s_R(X)_n)
=\begin{cases}
\frac{1}{2}\rank_R((X\otimes_R X)_n) & \text{if $n$ is odd} \\
\frac{1}{2}\rank_R((X\otimes_R X)_n)+\frac{1}{2}r_h & \text{if $n\equiv 0\pmod{4}$} \\
\frac{1}{2}\rank_R((X\otimes_R X)_n)-\frac{1}{2}r_h & \text{if $n\equiv 2\pmod{4}$.}
\end{cases}
$$
Another way is in terms of generating functions: For a complex $Y$ of free $R$-modules,
set $P^R_Y(t)=\sum_n\rank_R(Y_n)t^n$.  (Note that this is not usually the same
as the Poincar\'{e} series of $Y$.  It is the same if and only if 
$R$ is local and $Y$ is minimal.)
Using the previous display, we can then write
\begin{equation} \label{betti02eq1} 
P^R_{\s_R(X)}(t)=\textstyle\frac{1}{2}\left[
P^R_{X\otimes_R X}(t)+P^R_X(-t^2)\right]=\frac{1}{2}\left[
P^R_{X}(t)^2+P^R_X(-t^2)\right].
\end{equation}
We make use of this expression several times in what follows.
\end{disc}

\section{Homological Properties of $\s_R(X)$} \label{sec03}

This section
documents the homological and homotopical aspects of the functor $\s_R(-)$.
It also contains our proof of Theorem~\ref{intthmC}
from the introduction. 
We assume throughout this section that $2$ is a unit in $R$,
and it follows that $\s_R(X)\cong\sw_R(X)$ 
via the natural map for all $X$.

We begin with
the following result showing that $\s_R(X)$ exhibits properties
similar to those for $X\otimes_R X$ noted in Fact~\ref{fact01}.
Example~\ref{symm045} shows what goes wrong in 
part~\eqref{symm09item2} when
$\inf(X)$ is odd: 
we have $\s_R(\shift R)= 0$
so $\inf(\s_R(\shift R))=\infty>2=2\inf(\shift R)$.
Note that we do not need $R$ to be local in either 
part of this result.

\begin{prop} \label{symm09}
Assume that 2 is a unit in $R$ and let $X$ be a bounded-below complex
of projective $R$-modules.
\begin{enumerate}[\quad\rm(a)]
\item \label{symm09item1}
There is an inequality $\inf(\s_R(X))\geq 2\inf(X)$
and there is  an isomorphism 
$$\HH_{2\inf(X)}(\s_R(X))\cong \begin{cases} \s_R(\HH_{\inf(X)}(X)) & \text{if  $\inf(X)$  is even,}\\
\displaystyle\frac{\HH_{\inf(X)}(X)\otimes \HH_{\inf(X)}(X)}{\langle x\otimes y+ y\otimes x \mid
    x,y \in \HH_{\inf(X)}(X)\rangle} & \text{if  $\inf(X)$ is odd.}
\end{cases}$$
\item \label{symm09item2}
Assume that $R$ is noetherian and that 
$\HH_{\inf(X)}(X)$ is finitely generated.
If $\inf(X)$ is even, then $\inf(\s_R(X))= 2\inf(X)$.
\end{enumerate}
\end{prop}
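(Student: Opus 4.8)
The plan is to reduce both parts to a computation inside the single module $\HH_{2s}(X\otimes_R X)$, where $s=\inf(X)$, by using the idempotent splitting of Proposition~\ref{symm05}. Since $2$ is a unit, the second split exact sequence of Proposition~\ref{symm05}\eqref{symm05item0} produces a direct-sum decomposition of complexes $X\otimes_R X\cong\im(\alf X)\oplus\s_R(X)$, in which the projection onto the summand $\im(\alf X)$ is the idempotent $\tfrac12\alf X$ and the complementary projection onto $\s_R(X)$ is $\id-\tfrac12\alf X$. Applying $\HH_n(-)$ gives, for every $n$, a splitting $\HH_n(X\otimes_R X)\cong\HH_n(\im(\alf X))\oplus\HH_n(\s_R(X))$.

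For the inequality in~\eqref{symm09item1} I would observe that a direct summand of a complex has infimum at least that of the whole, so Fact~\ref{fact01} yields $\inf(\s_R(X))\geq\inf(X\otimes_R X)\geq 2s$. For the homology formula, the complementary projection identifies the summand $\HH_{2s}(\s_R(X))$ with $\coker(\HH_{2s}(\alf X))$, since $\tfrac12\HH_{2s}(\alf X)$ is idempotent and $2$ is a unit. The one point needing care is the following: under the K\"unneth isomorphism $\HH_{2s}(X\otimes_R X)\cong\HH_s(X)\otimes_R\HH_s(X)$ of Fact~\ref{fact01}, which carries the class of $z\otimes z'$ to $[z]\otimes[z']$ for degree-$s$ cycles, the induced map $\HH_{2s}(\alf X)$ corresponds to $[x]\otimes[x']\mapsto[x]\otimes[x']-(-1)^{s^2}[x']\otimes[x]$. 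This is exactly the endomorphism $\wti\alpha$ from the proof of Theorem~\ref{betti01}, now with $\HH_s(X)$ in place of $X_h$ and $s$ in place of $h$; reading off its cokernel there gives $\s_R(\HH_s(X))$ when $s$ is even and $(\HH_s(X)\otimes_R\HH_s(X))/\langle x\otimes y+y\otimes x\rangle$ when $s$ is odd, which is the claimed formula. Verifying that $\HH_{2s}(\alf X)$ has this form --- via naturality of the K\"unneth map together with $\alf X=\id-\theta_{XX}$ and the description of the tensor-commutativity isomorphism $\theta_{XX}$ recalled in Proposition~\ref{s2sum01} --- is the main obstacle, though it is routine once the isomorphism is made explicit in the bottom degree.

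For part~\eqref{symm09item2}, part~\eqref{symm09item1} already supplies $\inf(\s_R(X))\geq 2s$ together with $\HH_{2s}(\s_R(X))\cong\s_R(\HH_s(X))$, since $s$ is even; hence it suffices to prove that this module is nonzero, where $M:=\HH_s(X)$ is nonzero (as $s=\inf(X)$) and finitely generated. Because $M$ is finitely generated and nonzero, I would pick a maximal ideal $\m$ with $M/\m M\neq 0$ and set $k=R/\m$. Base change along $R\to k$ via Proposition~\ref{loc01}\eqref{loc01item1} gives $k\otimes_R\s_R(M)\cong\s_k(M/\m M)$, and since $M/\m M$ is a nonzero $k$-vector space its symmetric square $\s_k(M/\m M)$ has dimension $\binom{d+1}{2}>0$ with $d=\dim_k(M/\m M)\geq 1$. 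Therefore $\s_R(M)\neq 0$, so $\HH_{2s}(\s_R(X))\neq 0$ and $\inf(\s_R(X))=2s$, as desired.
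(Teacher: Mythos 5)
Your proposal is correct and takes essentially the same approach as the paper: the splitting $X\otimes_RX\cong\im(\alf X)\oplus\s_R(X)$ from Proposition~\ref{symm05}, the K\"unneth identification of $\HH_{2s}(\alf X)$ with the map $\ol x\otimes\ol{x'}\mapsto\ol x\otimes\ol{x'}-(-1)^s\ol{x'}\otimes\ol x$ (the paper's $\wti\alpha$), and, for part~(b), base change to a residue field where the symmetric square of a nonzero vector space is visibly nonzero. The only cosmetic difference is that you extract $\HH_{2s}(\s_R(X))\cong\coker(\HH_{2s}(\alf X))$ directly from the idempotent $\tfrac12\alf X$, whereas the paper reaches the same conclusion by a diagram chase through the two split exact sequences of Proposition~\ref{symm05}\eqref{symm05item0}.
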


\begin{proof}
\eqref{symm09item1}
Set $i=\inf(X)$. Proposition~\ref{symm05}\eqref{symm05item1} yields an
isomorphism 
$$\im(\alf X)\oplus \s_R(X) \cong X\otimes_R X.$$
This isomorphism yields the first
inequality in the next sequence
$$\inf(\s_R(X))\geq \inf(X\otimes_R X)
\geq 2i$$ while the second inequality is in Fact~\ref{fact01}. 

The split exact sequences from Proposition~\ref{symm05}\eqref{symm05item0}
fit together in the following commutative diagram
\begin{equation}
\begin{split} \label{seqdiag01}
\xymatrix{
0 \ar[r]
& \ker(\alf X)\ar[r]^-{\jinj X}
& X\otimes_R X \ar[r]^{\qpro X} \ar[d]_{\qpro X} \ar[rd]^{\alf X}
& \im(\alf X) \ar[r] \ar[d]^{\iinj X}
& 0 \\
& 0 \ar[r]
& \im(\alf X) \ar[r]^-{\iinj X}
& X\otimes_R X \ar[r]^{\ppro X}
& \s_R(X) \ar[r]
& 0.
}
\end{split}
\end{equation}
Define $\wti\alpha\colon \HH_{i}(X)\otimes_R \HH_{i}(X)\to \HH_{i}(X)\otimes_R \HH_{i}(X)$
by the formula 
$$\ol{x}\otimes\ol{x'}\mapsto\ol{x}\otimes\ol{x'}
-(-1)^{i^2}\ol{x'}\otimes\ol{x}=\ol{x}\otimes\ol{x'}
-(-1)^{i}\ol{x'}\otimes\ol{x}.$$
It is straightforward to  show that the following diagram commutes
\begin{equation}
\begin{split} \label{seqdiag01'}
\xymatrix{
\HH_{2i}(X\otimes_R X) \ar[r]^{\HH_{2i}(\alf X)} \ar[d]_{\cong}^{\gamma}
& \HH_{2i}(X\otimes_R X) \ar[d]^{\cong}_{\gamma}\\
\HH_{i}(X)\otimes_R \HH_{i}(X) \ar[r]^-{\wti\alpha}
& \HH_{i}(X)\otimes_R \HH_{i}(X).
}
\end{split}
\end{equation}
where the isomorphism $\gamma$ is
from Fact~\ref{fact01}. Together,
diagrams~\eqref{seqdiag01} and~\eqref{seqdiag01'} yield the next commutative diagram
\begin{equation}
\begin{split} \notag
\xymatrix{
\HH_i(X)\otimes \HH_i(X) \ar[rr]^{\HH_{2i}(\qpro X)\gamma^{-1}} \ar[d]^{\HH_{2i}(\qpro X)\gamma^{-1}}
\ar[rrd]^{\wti\alpha}
&& \HH_{2i}(\im(\alf X)) \ar[rr] \ar[d]^{\gamma\HH_{2i}(\iinj X)}
&& 0 \\
\HH_{2i}(\im(\alf X)) \ar[rr]^-{\gamma\HH_{2i}(\iinj X)}
&& \HH_{i}(X)\otimes \HH_{i}(X) \ar[rr]^-{\HH_{2i}(\ppro X)\gamma^{-1}}
&& \HH_{2i}(\s_R(X)) \ar[r]
& 0.
}
\end{split}
\end{equation}
whose rows are exact because the rows of diagram~\eqref{seqdiag01}
are split exact. A straightforward diagram-chase yields the equality
$\ker(\HH_{2i}(\ppro X)\gamma^{-1})=\im(\wti\alpha)$
and so
$$\HH_{2i}(\s_R(X))\cong \frac{\HH_{i}(X)\otimes_R
\HH_{i}(X)}{\im(\wti\alpha)}\cong\begin{cases}
  \s_R(\HH_i(X))& \text{if $i$  is even} \\ \displaystyle\frac{H_i(X)\otimes
    H_i(X)}{\langle x\otimes y + y\otimes x \mid x,y\in
    \HH_i(X)\rangle} & \text{ if $i$ is odd.}\end{cases}$$

\eqref{symm09item2}
Using part~\eqref{symm09item1}, it suffices to to show that $\s_R(\HH_i(X))\neq
0$ where $i=\inf(X)$. Fix a maximal ideal  $\m\in\supp_R(\HH_i(X))$, and set $k=R/\m$.  
Using the isomorphisms
$$k\otimes_R\HH_i(X)
\cong (k\otimes_{R_{\m}}R_{\m})\otimes_R\HH_i(X)
\cong k\otimes_{R_{\m}}\HH_i(X)_{\m}
\cong k\otimes_{R_{\m}}\HH_i(X_{\m})$$
Nakayama's Lemma implies that $k\otimes_R\HH_i(X)$
is a nonzero $k$-vector space of finite rank, say $k\otimes_R\HH_i(X)\cong k^r$.
In the following sequence, the first and third isomorphisms are well-known; see, e.g.,
\cite[(A2.2.b) and (A2.3.c)]{eisenbud:ca}:
$$
k\otimes_R\s_R(\HH_i(X))
\cong\s_k(k\otimes_R\HH_i(X))
\cong \s_k(k^r)
\cong k^{\binom{r+1}{r-1}}\neq 0.$$
It follows that $\s_R(\HH_i(X))\neq
0$, as desired.
\end{proof}

The next result establishes the homotopy-theoretic properties of the functor $\s_R(-)$.
Example~\ref{symm05c} shows that conclusion fails when
2 is not a unit in $R$.
Note that we cannot reduce
part~\eqref{symm05aitem1} to the case $g=0$ by replacing $f$ by
$f-g$, as Example~\ref{notadd01} shows that $\s_R(f-g)$ might not
equal $\s_R(f)-\s_R(g)$.

\begin{thm} \label{symm05a}
Assume that 2 is a unit in $R$, and let $X$ and $Y$ be $R$-complexes.
Fix morphisms $f,g\colon X\to Y$ and $h\colon Y\to X$.
\begin{enumerate}[\quad\rm(a)]
\item \label{symm05aitem1}
If $f$ and $g$ are homotopic, then $\s_R(f)$ and $\s_R(g)$ are homotopic.
\item \label{symm05aitem2}
If $f$ is a homotopy equivalence with homotopy inverse $h$,
then $\s_R(f)$ is a homotopy equivalence with homotopy inverse $\s_R(h)$.
\end{enumerate}
\end{thm}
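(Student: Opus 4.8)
The plan is to exploit that, since $2$ is a unit, $\s_R(X)$ is a direct summand of $X\otimes_R X$ onto which $\s_R(f)$ is the restriction of $f\otimes_R f$; homotopies can then be transferred along this retraction.

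First I would record the retraction structure. By Property~\eqref{symm04item5} the morphism $\tfrac{1}{2}\alf X$ is idempotent, so $\pi_X=\id_{X\otimes_R X}-\tfrac{1}{2}\alf X$ is also an idempotent morphism of complexes, with $\im(\pi_X)=\ker(\alf X)$. Proposition~\ref{symm05}\eqref{symm05item0} identifies this summand with $\s_R(X)$, providing chain maps $\epsilon_X\colon\s_R(X)\to X\otimes_R X$ (inclusion of $\ker(\alf X)$) and $\rho_X\colon X\otimes_R X\to\s_R(X)$ (corestriction of $\pi_X$) with $\rho_X\epsilon_X=\id_{\s_R(X)}$ and $\epsilon_X\rho_X=\pi_X$, and likewise for $Y$. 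Property~\eqref{symm04item2} gives $(f\otimes_R f)\alf X=\alf Y(f\otimes_R f)$, hence $(f\otimes_R f)\pi_X=\pi_Y(f\otimes_R f)$; restricting to the summands $\ker(\alf X)$ and $\ker(\alf Y)$ then yields the identities $\s_R(f)=\rho_Y(f\otimes_R f)\epsilon_X$, $(f\otimes_R f)\epsilon_X=\epsilon_Y\s_R(f)$, and $\rho_Y(f\otimes_R f)=\s_R(f)\rho_X$, with the same statements for $g$.

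For part~\eqref{symm05aitem1}, the crucial step is to produce a homotopy from $f\otimes_R f$ to $g\otimes_R g$, and here I would deliberately avoid replacing $f$ by $f-g$, which is illegitimate by the non-additivity warning preceding the theorem. Instead, given a homotopy $s$ from $f$ to $g$, I would check by a direct Leibniz-rule computation that $x\otimes x'\mapsto s(x)\otimes f(x')$ is a homotopy from $f\otimes_R f$ to $g\otimes_R f$ and that $x\otimes x'\mapsto(-1)^{|x|}g(x)\otimes s(x')$ is a homotopy from $g\otimes_R f$ to $g\otimes_R g$; the two computations each rely on $f$ and $g$ being chain maps, and the sign in the second is exactly what is needed for the middle terms to cancel. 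Their sum is a homotopy $S$ satisfying $f\otimes_R f-g\otimes_R g=\partial^{Y\otimes_R Y}S+S\partial^{X\otimes_R X}$. I would then transfer $S$ by setting $\Sigma=\rho_Y S\epsilon_X$; since $\rho_Y$ and $\epsilon_X$ are chain maps, a short manipulation gives $\partial^{\s_R(Y)}\Sigma+\Sigma\partial^{\s_R(X)}=\rho_Y(f\otimes_R f-g\otimes_R g)\epsilon_X=\s_R(f)-\s_R(g)$, using the identities $\s_R(f)=\rho_Y(f\otimes_R f)\epsilon_X$ and $\s_R(g)=\rho_Y(g\otimes_R g)\epsilon_X$ from the previous paragraph. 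Thus $\Sigma$ is the desired homotopy.

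Part~\eqref{symm05aitem2} is then formal. Functoriality (Property~\eqref{symm04item2}) gives $\s_R(f)\s_R(h)=\s_R(fh)$, $\s_R(h)\s_R(f)=\s_R(hf)$, and $\s_R(\id_X)=\id_{\s_R(X)}$, $\s_R(\id_Y)=\id_{\s_R(Y)}$. Applying part~\eqref{symm05aitem1} to the homotopies from $fh$ to $\id_Y$ and from $hf$ to $\id_X$ yields that $\s_R(f)\s_R(h)$ is homotopic to $\id_{\s_R(Y)}$ and $\s_R(h)\s_R(f)$ is homotopic to $\id_{\s_R(X)}$, so $\s_R(f)$ is a homotopy equivalence with homotopy inverse $\s_R(h)$. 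I expect the main obstacle to be the tensor-homotopy computation together with the bookkeeping of the transfer map $\rho_Y S\epsilon_X$; conceptually the whole argument hinges on the retraction $\epsilon_X,\rho_X$, which exists only because $2$ is a unit, precisely the hypothesis whose necessity is shown by Example~\ref{symm05c}.
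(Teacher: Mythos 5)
Your proof is correct, and it reaches the conclusion by a genuinely different descent mechanism than the paper's, even though the work on the tensor square is essentially shared. Your homotopy $S$, given by $x\otimes x'\mapsto s(x)\otimes f(x')+(-1)^{|x|}g(x)\otimes s(x')$, is precisely one of the two homotopies the paper constructs (its $g\otimes_R s+s\otimes_R f$); the paper also builds $f\otimes_R s+s\otimes_R g$ and then, using that $2$ is a unit, averages the two to obtain $\sigma=\frac{1}{2}(f\otimes_R s+s\otimes_R g+g\otimes_R s+s\otimes_R f)$. The whole point of that averaging is symmetry: $\sigma$ satisfies $\sigma_n\alf X_n=\alf Y_{n+1}\sigma_n$, hence carries $\im(\alf X)$ into $\im(\alf Y)$ and descends directly through the quotient maps to a homotopy $\ol\sigma$ from $\s_R(f)$ to $\s_R(g)$. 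You avoid symmetrization altogether: using the idempotent $\pi_X=\id-\frac{1}{2}\alf X$ you realize $\s_R(X)$ as a natural retract of $X\otimes_RX$ (the splitting of Proposition~\ref{symm05}(a), which indeed holds for arbitrary complexes --- only part (b) of that proposition needs bounded-below projectives, so your appeal is legitimate at the theorem's full generality), verify the naturality identities $\s_R(f)=\rho_Y(f\otimes_Rf)\epsilon_X$ and $\s_R(g)=\rho_Y(g\otimes_Rg)\epsilon_X$ from $(f\otimes_Rf)\alf X=\alf Y(f\otimes_Rf)$, and then the transfer $\Sigma=\rho_Y S\epsilon_X$ works by the formal two-line computation $\partial\Sigma+\Sigma\partial=\rho_Y(f\otimes_Rf-g\otimes_Rg)\epsilon_X=\s_R(f)-\s_R(g)$, with no need for $S$ itself to be compatible with $\alf X$ and $\alf Y$. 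The trade-off: the paper's route requires checking that $\sigma$ commutes with the maps $\alf X$, $\alf Y$ and that the induced map on cokernels is well defined, while yours requires the retraction bookkeeping; in exchange, your argument makes transparent that any homotopy between maps of tensor squares transfers to the summand, and both arguments use the invertibility of $2$ in an essential (but different) place --- yours in the idempotent splitting, the paper's in the averaging --- consistent with Example~\ref{symm05c}. Your part (b) is identical to the paper's.
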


\begin{proof}
\eqref{symm05aitem1} Fix a homotopy $s$ from $f$ to $g$ as in
Definition~\ref{notn02'}. Define 
\begin{align*}
f\otimes_R s+s\otimes_R g&=\{(f\otimes_R s+s\otimes_R g)_n\colon (X\otimes_RX)_n\to
(Y\otimes_R Y)_{n+1}\}\\
g\otimes_R s+s\otimes_R f&=\{(g\otimes_R s+s\otimes_R f)_n\colon (X\otimes_RX)_n\to
(Y\otimes_R Y)_{n+1}\}
\end{align*}
on each generator $x\otimes x'\in(X\otimes_RX)_n$ by the formulas
\begin{align*}
(f\otimes_R s+s\otimes_R g)_n(x\otimes x')
&:=(-1)^{|x|}f_{|x|}(x)\otimes s_{|x'|}(x')+s_{|x|}(x)\otimes g_{|x'|}(x')\\
(g\otimes_R s+s\otimes_R f)_n(x\otimes x')
&:=(-1)^{|x|}g_{|x|}(x)\otimes s_{|x'|}(x')+s_{|x|}(x)\otimes f_{|x'|}(x').
\end{align*}
One checks readily that the sequences
$f\otimes_R s+s\otimes_R g$ and $g\otimes_R s+s\otimes_R f$ are
homotopies from $f\otimes_R f$ to $g\otimes_R g$. As 2 is a
unit in $R$, it follows that the sequence
$$\sigma=\textstyle\frac{1}{2}(f\otimes_R s+s\otimes_R g+g\otimes_R s+s\otimes_R f)
$$
is also a homotopy
from $f\otimes_R f$ to $g\otimes_R g$.
It is straightforward to show that
$\sigma_n\alf X_n=\alf Y_{n+1}\sigma_n $
for all $n$.
Using  the fact that $\sigma$ is a homotopy
from $f\otimes_R f$ to $g\otimes_R g$, it is thus straightforward to show that $\sigma$ induces
a homotopy $\ol\sigma$
from $\s_R(f)$ to $\s_R(g)$ by the formula 
$\ol\sigma_n\left(\ol{x\otimes x'}\right)=\ol{\sigma_n(x\otimes x')}$.

\eqref{symm05aitem2}
By hypothesis, the composition $hf$ is homotopic to $\id_X$.
Part~\eqref{symm05aitem1}
implies that $\s_R(hf)=\s_R(h)\s_R(f)$ is homotopic to
$\s_R(\id_X)=\id_{\s_R(X)}$.
The same logic implies that $\s_R(f)\s_R(h)$ is homotopic to
$\id_{\s_R(Y)}$, and hence the desired conclusions.
\end{proof}

For the next results,
Examples~\ref{koszul01''} and~\ref{symm05c} show why we need to assume that
$X$ and $Y$ are bounded-below
complexes of projective $R$-modules and 2 is a unit in $R$.

\begin{cor} \label{symm05b}
Assume that 2 is a unit in $R$, and let $X$ and $Y$ be bounded-below
complexes of projective $R$-modules.
\begin{enumerate}[\quad\rm(a)]
\item \label{symm05bitem2}
If $f\colon X\to Y$ is a quasiisomorphism, then
so is $\s_R(f)\colon\s_R(X)\to \s_R(Y)$.
\item \label{symm05bitem3}
If $X\simeq Y$, then $\s_R(X)\simeq \s_R(Y)$.
\end{enumerate}
\end{cor}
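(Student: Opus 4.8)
The plan is to reduce both parts to the homotopy-theoretic statement in Theorem~\ref{symm05a} by passing back and forth between quasiisomorphisms and homotopy equivalences, which is legitimate precisely because $X$ and $Y$ are bounded-below complexes of projectives.

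For part~\eqref{symm05bitem2}, first I would invoke Fact~\ref{fact0001}: since $X$ and $Y$ are bounded-below complexes of projective $R$-modules, the quasiisomorphism $f\colon X\to Y$ is automatically a homotopy equivalence, so it admits a homotopy inverse $h\colon Y\to X$. Then Theorem~\ref{symm05a}\eqref{symm05aitem2} applies directly to show that $\s_R(f)$ is a homotopy equivalence, with homotopy inverse $\s_R(h)$. Finally, the converse recorded in Fact~\ref{fact0001} --- that any homotopy equivalence of $R$-complexes is a quasiisomorphism --- yields that $\s_R(f)$ is a quasiisomorphism, as desired.

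For part~\eqref{symm05bitem3}, I would unwind the definition of $\simeq$ from Definition~\ref{notn03}: the hypothesis $X\simeq Y$ furnishes a quasiisomorphism $f\colon X\res Y$. Part~\eqref{symm05bitem2} then provides a quasiisomorphism $\s_R(f)\colon\s_R(X)\to\s_R(Y)$. To conclude $\s_R(X)\simeq\s_R(Y)$ in the sense of Definition~\ref{notn03}, I would note that Proposition~\ref{symm05}\eqref{symm05item1} guarantees $\s_R(X)$ and $\s_R(Y)$ are again bounded-below complexes of projective $R$-modules, so the relation $\simeq$ is defined on them and the quasiisomorphism $\s_R(f)$ witnesses it.

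I do not anticipate a genuine obstacle, since the corollary is essentially a formal consequence of Theorem~\ref{symm05a} together with the standard equivalence between quasiisomorphisms and homotopy equivalences for bounded-below projective complexes. The only point requiring minor care is bookkeeping: ensuring that the projectivity and boundedness hypotheses are in force at each step, so that Fact~\ref{fact0001} may be applied in \emph{both} directions and so that the target complexes $\s_R(X)$ and $\s_R(Y)$ lie in the class for which $\simeq$ is defined.
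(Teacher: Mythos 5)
Your proof is correct and follows essentially the same route as the paper: part~(a) via Fact~\ref{fact0001} and Theorem~\ref{symm05a}\eqref{symm05aitem2}, and part~(b) by unwinding Definition~\ref{notn03} and applying part~(a). Your extra bookkeeping --- citing Proposition~\ref{symm05}\eqref{symm05item1} to confirm that $\s_R(X)$ and $\s_R(Y)$ remain bounded-below complexes of projectives, so the relation $\simeq$ applies to them --- is a detail the paper leaves implicit, and it is a welcome addition.
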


\begin{proof}
\eqref{symm05bitem2}
Our assumptions imply that $f$ is a homotopy equivalence by Fact~\ref{fact0001}, so the desired
conclusion follows from Theorem~\ref{symm05a}\eqref{symm05aitem2}.

\eqref{symm05bitem3}
Assume $X\simeq Y$.  Because $X$ and $Y$ are bounded-below
complexes of projective $R$-modules,
there is a quasiisomorphism
$f\colon X\xra{\simeq} Y$.
Now apply part~\eqref{symm05bitem2}.
\end{proof}

\begin{cor} \label{loc01'}
If 2 is a unit in $R$ and $X$ is a bounded-below complex of
projective $R$-modules, then
there is a containment $\supp_R(\s_R(X))\subseteq\supp_R(X)$.
\end{cor}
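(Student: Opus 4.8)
The plan is to prove the contrapositive. I would fix a prime $\p\in\spec(R)$ with $\p\notin\supp_R(X)$ and show that $\p\notin\supp_R(\s_R(X))$. By the definition of the large support, the hypothesis $\p\notin\supp_R(X)$ says precisely that $X_\p\simeq 0$, i.e.\ that $X_\p$ is acyclic, and the goal reduces to showing $\s_R(X)_\p\simeq 0$.

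The crucial move is to localize everything at $\p$ and work over $R_\p$. First I would check that localization preserves all the hypotheses in play: $X_\p=R_\p\otimes_R X$ is again a bounded-below complex of projective $R_\p$-modules (localization sends projectives to projectives and does not change the degrees), and $2$ remains a unit in $R_\p$ since it is a unit in $R$. Next, because the zero complex is trivially a bounded-below complex of projectives and $X_\p$ is acyclic, the zero morphism $X_\p\to 0$ is a quasiisomorphism; thus $X_\p\simeq 0$ over $R_\p$ in the sense of Definition~\ref{notn03}.

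With this in hand I would invoke Corollary~\ref{symm05b}\eqref{symm05bitem3} over the ring $R_\p$, whose hypotheses ($2$ a unit, bounded-below complexes of projectives) are now verified, to conclude that $\s_{R_\p}(X_\p)\simeq\s_{R_\p}(0)$. Since $\s_{R_\p}(0)=0$ by Example~\ref{symm04item4}, this gives $\s_{R_\p}(X_\p)\simeq 0$. Finally, the localization isomorphism $\s_R(X)_\p\cong\s_{R_\p}(X_\p)$ from Proposition~\ref{loc01}\eqref{loc01item2} transports the acyclicity back, yielding $\s_R(X)_\p\simeq 0$ and hence $\p\notin\supp_R(\s_R(X))$, as desired.

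There is no serious obstacle here; the argument is an assembly of earlier results, and the only points requiring care are bookkeeping ones: verifying that each hypothesis of Corollary~\ref{symm05b} survives localization, and correctly translating the condition $\p\notin\supp_R(X)$ into the assertion that $X_\p$ is acyclic, which for a bounded-below complex of projectives is exactly the statement $X_\p\simeq 0$.
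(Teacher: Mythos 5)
Your proposal is correct and follows essentially the same route as the paper: localize at $\p$, use the isomorphism $\s_R(X)_\p\cong\s_{R_\p}(X_\p)$ from Proposition~\ref{loc01}\eqref{loc01item2}, and apply Corollary~\ref{symm05b}\eqref{symm05bitem3} to $X_\p\simeq 0$ to get $\s_{R_\p}(X_\p)\simeq\s_{R_\p}(0)=0$. Your extra bookkeeping (that projectivity, boundedness below, and the invertibility of $2$ all survive localization) is left implicit in the paper but is exactly the right thing to check.
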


\begin{proof}
Fix a prime ideal $\p\not\in\supp_R(X)$.  It suffices
to show $\p\not\in\supp_R(\s_R(X))$.
The first isomorphism in the following sequence is from 
Proposition~\ref{loc01}\eqref{loc01item2}
$$\s_R(X)_{\p}
\cong \s_{R_{\p}}(X_{\p}) \simeq \s_{R_{\p}}(0) = 0.$$ The
quasiisomorphism follows from
Corollary~\ref{symm05b}\eqref{symm05bitem3} because $X_{\p}\simeq
0$.
\end{proof}

The following result is key for our proof of Theorem~\ref{intthmC}.

\begin{thm} \label{symm07}
Assume that $R$ is noetherian and local and that 2  is a unit in $R$.
Let $X$ be a bounded-below complex
of finite-rank free $R$-modules.
The following conditions are equivalent:
\begin{enumerate}[\quad\rm(i)]
\item \label{symm07item1a}
the surjection $\ppro X\colon X\otimes_R X\to \s_R(X)$ is a quasiisomorphism;
\item \label{symm07item1c}
$\im(\alf X)\simeq 0$;
\item \label{symm07item1b}
the injection $\jinj X\colon \ker(\alf X)\to X\otimes_R X$ is a quasiisomorphism;
\item \label{symm07item1d}
either $X\simeq 0$ or $X\simeq\shift^{2n} R$ for some integer $n$.
\end{enumerate}
\end{thm}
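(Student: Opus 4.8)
The plan is to prove the equivalences in two layers. The implications \eqref{symm07item1a}$\Leftrightarrow$\eqref{symm07item1c}$\Leftrightarrow$\eqref{symm07item1b} are formal and follow at once from the two split exact sequences in Proposition~\ref{symm05}\eqref{symm05item0}: in a split short exact sequence of complexes the quotient map is a quasiisomorphism precisely when the subcomplex is acyclic, and dually. The second sequence shows that $\ppro X$ is a quasiisomorphism iff its kernel $\im(\alf X)$ is acyclic, and the first shows that $\jinj X$ is a quasiisomorphism iff its cokernel $\im(\alf X)$ is acyclic. Thus each of \eqref{symm07item1a}, \eqref{symm07item1c}, \eqref{symm07item1b} is literally the assertion $\im(\alf X)\simeq 0$, and it remains only to prove \eqref{symm07item1c}$\Leftrightarrow$\eqref{symm07item1d}.

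For \eqref{symm07item1d}$\Rightarrow$\eqref{symm07item1c} I would argue by homology, using the complex isomorphism $X\otimes_R X\cong\im(\alf X)\oplus\s_R(X)$ from Proposition~\ref{symm05}\eqref{symm05item0}, which gives $\HH_i(\im(\alf X))\oplus\HH_i(\s_R(X))\cong\HH_i(X\otimes_R X)$ in each degree. If $X\simeq 0$ then $X\otimes_R X\simeq 0$ by Fact~\ref{fact01}, so the summand $\im(\alf X)$ is acyclic. If $X\simeq\shift^{2n}R$, then Fact~\ref{fact01} gives $X\otimes_R X\simeq\shift^{4n}R$, while Corollary~\ref{symm05b}\eqref{symm05bitem3} together with Properties~\eqref{symm04item3} and $\s_R(R)\cong R$ gives $\s_R(X)\simeq\shift^{4n}R$; so both $\HH(X\otimes_R X)$ and $\HH(\s_R(X))$ are copies of $R$ concentrated in degree $4n$. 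Comparing summands degreewise forces $\HH_i(\im(\alf X))=0$ for $i\neq 4n$ and $\HH_{4n}(\im(\alf X))\oplus R\cong R$; since $R$ is noetherian local and the homology in question is finitely generated, Nakayama's lemma (tensoring with the residue field) kills $\HH_{4n}(\im(\alf X))$, giving $\im(\alf X)\simeq 0$.

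The substantive direction is \eqref{symm07item1c}$\Rightarrow$\eqref{symm07item1d}. First I would reduce to the case that $X$ is minimal: by Fact~\ref{fact0001''} the minimal free resolution $F$ of $X$ satisfies $F\simeq X$ and is degreewise finite, and all four conditions transfer along this quasiisomorphism (using Fact~\ref{fact01}, Corollary~\ref{symm05b}, and the naturality of $\ppro X$ from Properties~\eqref{symm04item2}), so it suffices to treat minimal $X$. The key observation is then that minimality is inherited: since $\im(\partial^X)\subseteq\m X$, the tensor differential lands in $\m(X\otimes_R X)$, so $X\otimes_R X$ is minimal; and because $\im(\alf X)$ is a \emph{degreewise} direct summand of $X\otimes_R X$ (Proposition~\ref{symm05}\eqref{symm05item0}) carrying the restricted differential, the identity $\m(X\otimes_R X)\cap\im(\alf X)=\m\,\im(\alf X)$ shows $\im(\alf X)$ is minimal as well. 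A bounded-below minimal complex of free modules that is acyclic must be zero---inspecting its lowest nonzero module, minimality would otherwise produce nonzero homology there---so \eqref{symm07item1c} upgrades from $\im(\alf X)\simeq 0$ to $\im(\alf X)=0$, that is, $\alf X=0$.

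Finally, from $\alf X=0$ I would read off the structure of $X$. The vanishing says $x\otimes x'=(-1)^{|x||x'|}x'\otimes x$ in $X\otimes_R X$ for all homogeneous $x,x'$. Taking basis elements in two distinct nonzero degrees $p\neq q$ places $x\otimes x'$ and $x'\otimes x$ in different summands of $(X\otimes_R X)_{p+q}$, forcing $x\otimes x'=0$, which is impossible for free modules; hence $X$ is concentrated in a single degree $m$, say of rank $r$. Within degree $m$ the relation becomes $x\otimes x'=(-1)^{m}x'\otimes x$ on $X_m\otimes_R X_m$. If $m$ is odd, setting $x=x'$ gives $2(x\otimes x)=0$, so $x\otimes x=0$ for all $x$ (as $2$ is a unit), forcing $X_m=0$ and $X\simeq 0$; if $m$ is even, the relation $e_i\otimes e_j=e_j\otimes e_i$ among basis elements forces $r\leq 1$, so either $X\simeq 0$ or $X\cong\shift^{m}R$ with $m=2n$. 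Transporting back through $F\simeq X$ yields \eqref{symm07item1d}. I expect the main obstacle to be exactly the minimal reduction together with the inheritance of minimality by the summand $\im(\alf X)$: this is what converts the homotopy-level hypothesis $\im(\alf X)\simeq 0$ into the rigid equality $\alf X=0$, after which the classification is elementary.
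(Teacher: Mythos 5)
Your proposal is correct, but the substantive implication is handled by a genuinely different argument from the paper's. The formal equivalences \eqref{symm07item1a}$\iff$\eqref{symm07item1c}$\iff$\eqref{symm07item1b} match the paper exactly, and your proof of \eqref{symm07item1d}$\implies$\eqref{symm07item1c} is an acceptable variant: the paper instead verifies \eqref{symm07item1d}$\implies$\eqref{symm07item1a} directly with an augmented commutative diagram exploiting $\alf R=0$, whereas you compare homology summands and finish with Nakayama on $\HH_{4n}(\im(\alf X))\oplus R\cong R$; both work. For the hard direction the paper also reduces to minimal $X$ and notes that $\s_R(X)$, being a summand of the minimal complex $X\otimes_RX$, is minimal, so that $\ppro X$ is forced to be an \emph{isomorphism}; it then passes to generating functions, deriving $P^R_X(t)^2=P^R_X(-t^2)$ from~\eqref{betti02eq1} and invoking the power-series Lemma~\ref{poinc01} to conclude $P^R_X(t)=t^{2n}$. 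You instead apply minimality to the \emph{other} summand: $\im(\alf X)$ inherits minimality (your identity $\m(X\otimes_RX)\cap\im(\alf X)=\m\,\im(\alf X)$ for an internal degreewise summand is correct, and its modules are finitely generated free since $R$ is local), and a bounded-below minimal acyclic complex of finitely generated free modules vanishes by Nakayama at its lowest degree; this upgrades $\im(\alf X)\simeq 0$ to the rigid equality $\alf X=0$, after which your basis-level analysis correctly pins down $X$: two distinct nonzero degrees would place $x\otimes x'$ and $x'\otimes x$ in different summands of $(X\otimes_RX)_{p+q}$ and kill a basic tensor, odd concentration dies via $2(x\otimes x)=0$ with $2$ a unit, and even concentration of rank at least $2$ dies via $e_i\otimes e_j\neq e_j\otimes e_i$. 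Your route buys a sharper structural statement---in the minimal case conditions \eqref{symm07item1a}--\eqref{symm07item1b} are equivalent to $\alf X=0$---and avoids the generating-function machinery entirely; the paper's Poincar\'e-series method, by contrast, is reused nearly verbatim in Theorem~\ref{symm07''} and Corollary~\ref{s2fpd02}, so it amortizes the cost of Lemma~\ref{poinc01} across several results, which your more elementary argument would not do on its own.
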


\begin{proof}\eqref{symm07item1a}
The biimplications
\eqref{symm07item1a}$\iff$\eqref{symm07item1c}$\iff$\eqref{symm07item1b} 
follow easily from the long exact sequences associated to the exact
sequences in Proposition~\ref{symm05}\eqref{symm05item0}.

\eqref{symm07item1d}$\implies$\eqref{symm07item1a}.
If $X\simeq 0$, then $X\otimes_R X\simeq 0\simeq\s_R(X)$ and so
$\ppro X$ is trivially a quasiisomorphism; see Fact~\ref{fact01}
and Example~\ref{symm04item4}.

Assuming that $X\simeq \shift^{2n} R$,  there is a quasiisomorphism
$\gamma\colon R\res\shift^{-2n}X$.
The commutative diagrams from~\eqref{symm04item3} 
and~\eqref{symm04item2} can be combined and augmented to form the following
commutative diagram:
$$\xymatrix{
R\otimes_R R \ar[r]^-{\alf R} \ar[d]_{\simeq}^{\gamma\otimes\gamma}
& R\otimes_R R \ar[r]^-{\ppro R}_-{\cong} \ar[d]_{\simeq}^{\gamma\otimes\gamma}
& \s_R(R) \ar[r] \ar[d]_{\simeq}^{\s(\gamma)}
& 0 \\
(\shift^{-2n}X)\otimes_R(\shift^{-2n}X) \ar[r]^-{\alf{\shift^{-2n}X}} \ar[d]_{\cong}
& (\shift^{-2n}X)\otimes_R(\shift^{-2n}X) \ar[d]_{\cong} \ar[r]^-{\ppro{\shift^{-2n}X}}
&\s_R(\shift^{-2n}X) \ar[r] \ar[d]_{\cong}
& 0 \\
\shift^{-4n}(X\otimes_R X) \ar[r]^-{\shift^{-4n}\alf X} 
& \shift^{-4n}(X\otimes_R X) \ar[r]^-{\shift^{-4n}\ppro X} 
& \shift^{-4n}\s_R(X) \ar[r]
& 0.
}
$$
The morphism $\gamma\otimes \gamma$ is a quasiisomorphism by Fact~\ref{fact01},
and $\s(\gamma)$ is a quasiisomorphism by Corollary~\ref{symm05b}\eqref{symm05bitem2}.
One checks readily that $\alf R=0$ so $\ppro R$ is an isomorphism.
The diagram shows that $\ppro{\shift^{-2n}X}$ is a quasiisomorphism,
and hence so is $\shift^{-4n}\ppro X$. It follows that $\ppro X$ is a quasiisomorphism,
as desired.

\eqref{symm07item1a}$\implies$\eqref{symm07item1d}.
Assume that the surjection $\ppro X\colon X\otimes_R X\to \s_R(X)$ is a
quasiisomorphism and $X\not\simeq 0$. 

Case 1: $X$ is minimal. 
This implies that $X\otimes_RX$ is minimal. Also, since $\s_R(X)$ is a 
direct summand of $X\otimes_RX$, it follows that $\s_R(X)$ is also minimal.
The fact that
$\ppro X$ is a quasiisomorphism then implies that it is an isomorphism;
see Fact~\ref{fact0001''}.
This explains the  second equality in the next sequence
$$P^R_{X}(t)^2
=P^R_{X\otimes_R X}(t)
=P^R_{\s_R(X)}(t) 
=\textstyle\frac{1}{2}\left[P^R_{X}(t)^2+P^R_X(-t^2)\right].$$ 
The third equality is from equation~\eqref{betti02eq1}.
It follows that 
\begin{equation} \label{poinc02a}
P^R_{X}(t)^2=P^R_X(-t^2).
\end{equation}
Let 
$i = \inf(X)$ and note that $r_i\geq 1$.  
Set $r_n=\rank_R(X_{n-i})$ for each $n$ and $Q(t)=\sum_{n=0}^{\infty}r_{n+i}t^n$,
so that we have $P^R_X(t)=t^iQ(t)$. Equation~\eqref{poinc02a} then reads as
$t^{2i}Q(t)^2=(-1)^it^{2i}Q(-t^2)$, that is, we have
\begin{equation} \label{poinc02b}
Q(t)^2-(-1)^iQ(-t^2)=0.
\end{equation}
If $i$ were odd, then this would say $Q(t)^2+Q(-t^2)=0$, contradicting
Lemma~\ref{poinc01}\eqref{poinc01item1}.
It follows that $i= 2n$ for some
$n$. Equation~\eqref{poinc02b} then says $Q(t)^2-Q(-t^2)=0$, and so
Lemma~\ref{poinc01}\eqref{poinc01item2} implies that $Q(t)=1$.
This says that $P^R_X(t)=t^i=t^{2n}$ and so $X\cong \shift^{2n}R$,
as desired.

Case 2: the general case.
Let $\delta\colon P\res X$ be a minimal free resolution.
We again augment the commutative diagram from~\eqref{symm04item2} 
$$\xymatrix{
P\otimes_R P \ar[r]^-{\alf P} \ar[d]_{\simeq}^{\delta\otimes\delta}
& P\otimes_R P \ar[r]^-{\ppro P} \ar[d]_{\simeq}^{\delta\otimes\delta}
& \s_R(P) \ar[r] \ar[d]_{\simeq}^{\s(\delta)}
& 0 \\
X\otimes_RX \ar[r]^-{\alf{X}} 
& X\otimes_RX \ar[r]^-{\ppro{X}}_-{\simeq}
&\s_R(X) \ar[r] 
& 0.
}
$$
This implies that $\ppro P$ is a quasiisomorphism. Since $P$ is minimal,
Case 1 implies that either 
$P\simeq 0$ or $P\simeq\shift^{2n} R$ for some integer $n$.
Since we have $X\simeq P$, the desired conclusion follows.
\end{proof}

\begin{disc} \label{disc0002}
One can remove the local assumption and change the word ``free'' to ``projective'' in
Theorem~\ref{symm07} if one replaces condition~\eqref{symm07item1d} with the following
condition:
(iv') for every maximal ideal $\m\subset R$, one has 
either $X_{\m}\simeq 0$ or $X_{\m}\simeq\shift^{2n} R_{\m}$
for some integer $n$. (Here the integer $n$ depends on the choice of $\m$.)
While this gives the illusion of greater generality, this version is equivalent to
Theorem~\ref{symm07} because each of the conditions
\eqref{symm07item1a}--\eqref{symm07item1b} and (iv') is local.
Hence, we state only the local versions of our results, with the knowledge
that nonlocal versions are direct consequences.
On the other hand, Example~\ref{ex0002} shows that one needs to take care when
removing the local hypotheses from our results. 
\end{disc}

We next show how Theorem~\ref{intthmC} is a consequence of 
Theorem~\ref{symm07}.

\begin{para} \label{thmCpf}
\emph{Proof of Theorem~\ref{intthmC}.}
The assumption $X_{\p}\simeq S_{\p}\neq 0$ for each $\p\in\ass(R)$ implies
$X\not\simeq 0$ and $\inf(X)\leq\inf(X_{\p})=0$.
On the other hand, since $X_n=0$ for all $n<0$, we know
$\inf(X)\geq 0$,  so $\inf(X)=0$.  

Consider the split exact
sequence from Proposition~\ref{symm05}\eqref{symm05item0}
\begin{equation} \label{thmBpfeq02} 
0\to\im(\alf X)
\xra{\iinj X}X\otimes_S X
\xra{\ppro X}\s_S(X) \to 0.
\end{equation}
This sequence splits,  so
$\HH_n(\im(\alf X))\hookrightarrow\HH_n(X\otimes_SX)$ for each $n$;
hence
\begin{equation} \label{thmBpfeq01} 
\ass_R(\HH_n(\im(\alf X)))\subseteq\ass_R(\HH_n(X\otimes_SX))
\subseteq\ass(R).
\end{equation}
For each $\p\in\ass(R)$ localization of~\eqref{thmBpfeq02} yields the exactness
of the rows of the following commutative diagram; see also Proposition~\ref{loc01}\eqref{loc01item2}.
$$\xymatrix{
0\ar[r]
&\im(\alf X)_{\p} \ar[r]^-{(\iinj X)_{\p}} \ar[d]_{\cong}
&(X\otimes_S X)_{\p} \ar[r]^-{(\ppro X)_{\p}}\ar[d]_{\cong}
&\s_S(X)_{\p} \ar[r] \ar[d]_{\cong}
& 0 \\
0\ar[r]
&\im(\alf{X_{\p}}) \ar[r]^{\iinj{X_{\p}}}
&X_{\p}\otimes_{S_{\p}} X_{\p}\ar[r]^-{\ppro{X_{\p}}}
&\s_{S_{\p}}(X_{\p}) \ar[r]
& 0
}$$
The quasiisomorphism $X_{\p}\simeq S_{\p}$ implies
that $\ppro{X_{\p}}$
is also a quasiisomorphism by Theorem~\ref{symm07}, and so
the previous sequence implies
$\im(\alf X)_{\p}\cong \im(\alf{X_{\p}})\simeq 0$
for each $\p\in\ass(R)$.
For each $n$ and $\p$, this implies
$\HH_n(\im(\alf X))_{\p}
\cong \HH_n(\im(\alf X)_{\p})=0$;
the containment in~\eqref{thmBpfeq01}
implies $\HH_n(\im(\alf X))=0$ for each $n$, that is $\im(\alf X)\simeq 0$.
Hence, Theorem~\ref{symm07} implies
$X\simeq S$.  \qed
\end{para}

The next result is a companion to Theorem~\ref{symm07}.

\begin{thm} \label{symm07''}
Assume that $R$ is noetherian and local, and that 2  is a unit in $R$.
Let $X$ be a bounded-below complex
of finite rank free $R$-modules.
The following conditions are equivalent:
\begin{enumerate}[\quad\rm(i)]
\item \label{symm07item2a}
the morphism $\alf X\colon X\otimes_R X\to X\otimes_R X$ is a quasiisomorphism;
\item \label{symm07item2b}
the surjection $\qpro X\colon X\otimes_R X\to \im(\alf X)$ is a quasiisomorphism;
\item \label{symm07item2c}
the injection $\iinj X\colon \im(\alf X)\to X\otimes_R X$ is a quasiisomorphism;
\item \label{symm07item2d}
$\s_R(X)\simeq 0$;
\item \label{symm07item2e}
$\ker(\alf X)\simeq 0$;
\item \label{symm07item2f}
$X\simeq 0$ or $X\simeq \shift^{2n+1}R$ for some integer $n$. 
\end{enumerate}
\end{thm}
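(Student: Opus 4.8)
The plan is to separate the six conditions into the homological cluster (i)--(v), which I would handle by a single idempotent argument, and the structural description (vi), which I would obtain by the power-series method of the companion Theorem~\ref{symm07}.

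First I would dispose of (i)--(v). Since $2$ is a unit, the endomorphism $\tfrac12\alf X$ of the complex $X\otimes_R X$ is idempotent by Properties~\eqref{symm04item5}, so it splits $X\otimes_R X$ internally as $\im(\alf X)\oplus\ker(\alf X)$, and on this decomposition $\alf X$ acts as multiplication by $2$ on the first summand and as $0$ on the second. As $2$ is a unit, the first of these is an isomorphism, so $\alf X$ is a quasiisomorphism precisely when $\ker(\alf X)\simeq 0$; this gives (i)$\Leftrightarrow$(v). The same internal splitting identifies $\s_R(X)=\coker(\alf X)=(X\otimes_R X)/\im(\alf X)$ with the complementary summand $\ker(\alf X)$, so $\s_R(X)\cong\ker(\alf X)$ as complexes and hence (iv)$\Leftrightarrow$(v). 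Finally, the two split exact sequences of Proposition~\ref{symm05}\eqref{symm05item0} are short exact on homology, whence $\qpro X$ is a quasiisomorphism exactly when $\ker(\alf X)\simeq 0$ and $\iinj X$ is a quasiisomorphism exactly when $\s_R(X)\simeq 0$; these are (ii)$\Leftrightarrow$(v) and (iii)$\Leftrightarrow$(iv). Thus (i)--(v) are all equivalent.

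For (vi)$\Rightarrow$(iv) I would argue directly: if $X\simeq 0$ then $\s_R(X)\simeq\s_R(0)=0$, and if $X\simeq\shift^{2n+1}R$ then Corollary~\ref{symm05b}\eqref{symm05bitem3} together with the computation $\s_R(\shift^{2n+1}R)=0$ from Example~\ref{symm045} gives $\s_R(X)\simeq 0$. The work lies in (iv)$\Rightarrow$(vi). Assume $\s_R(X)\simeq 0$ and $X\not\simeq 0$. If $X$ is minimal, then $X\otimes_R X$ is minimal and, being a direct summand of it via Proposition~\ref{symm05}\eqref{symm05item0}, so is $\s_R(X)$; hence $\s_R(X)\simeq 0$ forces $\s_R(X)=0$ by Fact~\ref{fact0001''}. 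Substituting $P^R_{\s_R(X)}(t)=0$ into~\eqref{betti02eq1} yields $P^R_X(t)^2+P^R_X(-t^2)=0$. Writing $i=\inf(X)$ and $P^R_X(t)=t^iQ(t)$ with constant term $r_i\geq 1$, this reads $Q(t)^2+(-1)^iQ(-t^2)=0$. Even $i$ is impossible by Lemma~\ref{poinc01}\eqref{poinc01item1}, so $i=2n+1$ and Lemma~\ref{poinc01}\eqref{poinc01item2} forces $Q(t)=1$, i.e.\ $X\cong\shift^{2n+1}R$. The general case then follows by passing to a minimal free resolution $\delta\colon P\res X$: Corollary~\ref{symm05b}\eqref{symm05bitem2} gives $\s_R(P)\simeq\s_R(X)\simeq 0$, the minimal case yields $P\cong\shift^{2n+1}R$ (note $P\not\simeq 0$), and $X\simeq P$ completes the proof.

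The main obstacle is the parity bookkeeping in the power-series step: one must verify that an even infimum is genuinely excluded, which is exactly the role of Lemma~\ref{poinc01}\eqref{poinc01item1}, so that the \emph{odd} shift of (vi) appears here in place of the even shift in Theorem~\ref{symm07}. Once the idempotent splitting is in hand, the homological equivalences are formal, and the remainder is a parity-adjusted transcription of the proof of Theorem~\ref{symm07}.
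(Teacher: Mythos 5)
Your proof is correct, and for the homological cluster (i)--(v) it takes a genuinely cleaner route than the paper. The paper extracts (ii)$\Leftrightarrow$(v) and (iii)$\Leftrightarrow$(iv) from the split exact sequences of Proposition~\ref{symm05}\eqref{symm05item0}, just as you do, but it ties (i) and (v) to (iv) by minimality arguments: it passes to a minimal free resolution, uses Fact~\ref{fact0001''} to upgrade quasiisomorphisms to isomorphisms, counts ranks in the split sequence to conclude $\ker(\alf X)=0$, and chases the diagram~\eqref{seqdiag02} --- all of which leans on the noetherian local hypotheses. Your observation that the idempotent $\frac{1}{2}\alf X$ yields an internal eigen-decomposition $X\otimes_RX=\im(\alf X)\oplus\ker(\alf X)$ of complexes on which $\alf X$ acts as multiplication by $2$ and by $0$, so that in particular $\s_R(X)\cong\coker(\alf X)\cong\ker(\alf X)$ as complexes, makes (i)--(v) \emph{formally} equivalent over any ring in which $2$ is a unit, with no minimality, finiteness, locality, or boundedness assumptions; those hypotheses are then needed only to connect with (vi). For (iv)$\Rightarrow$(vi) you follow the paper's skeleton (minimal case first, substitute $P^R_{\s_R(X)}(t)=0$ into~\eqref{betti02eq1}, then reduce the general case via a minimal free resolution and Corollary~\ref{symm05b}), with one variation: you exclude even infimum by applying Lemma~\ref{poinc01}\eqref{poinc01item1} to the identity $Q(t)^2+(-1)^iQ(-t^2)=0$, whereas the paper invokes Proposition~\ref{symm09}\eqref{symm09item2} to force $\inf(\s_R(X))=2i<\infty$, a contradiction; your variant is precisely the parity argument from the paper's own proof of Theorem~\ref{symm07}, is self-contained, and loses nothing. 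The reverse implication (vi)$\Rightarrow$(iv) via Example~\ref{symm045} and Corollary~\ref{symm05b}\eqref{symm05bitem3} matches the paper. Net effect: your treatment of (i)--(v) is shorter and strictly more general than the paper's, your structural step is an accurate parity-adjusted transcription of Theorem~\ref{symm07}, and the argument is complete.
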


\begin{proof}
The biimplications
\eqref{symm07item2b}$\iff$\eqref{symm07item2e} and
\eqref{symm07item2c}$\iff$\eqref{symm07item2d}
follow easily from the long exact sequences associated to the exact sequences in
Proposition~\ref{symm05}\eqref{symm05item0}.

For the remainder of the proof, we use the
easily verified fact that 
the exact sequences from Proposition~\ref{symm05}\eqref{symm05item0}
fit together in the following commutative diagram
\begin{equation}
\begin{split}
\label{seqdiag02}
\xymatrix{
0 \ar[r]
& \ker(\alf X)\ar[r]^-{\jinj X}
& X\otimes_R X \ar[r]^{\qpro X} \ar[d]_{\qpro X} \ar[rd]^{\alf X}
& \im(\alf X) \ar[r] \ar[d]^{\iinj X}
& 0 \\
& 0 \ar[r]
& \im(\alf X) \ar[r]^-{\iinj X}
& X\otimes_R X \ar[r]^{\ppro X}
& \s_R(X) \ar[r]
& 0
}
\end{split}
\end{equation}
and we recall that these exact sequences split.

\eqref{symm07item2a}$\implies$\eqref{symm07item2d}. 
Assume that $\alf X$ is a quasiisomorphism.

Case 1: $X$ is minimal.
Since $X$ is
minimal, the same is true of $X\otimes_R X$, so the fact that
$\alf X$ is a quasiisomorphism 
implies that $\alf X$ is an isomorphism; see Fact~\ref{fact0001''}. Hence, we have
$\s_R(X)=\coker(\alf X)= 0$. 

Case 2: the general case. Let $f\colon P\res X$ be a minimal free resolution.
The commutative diagram from~\eqref{symm04item2}
$$\xymatrix{
P\otimes_R P \ar[r]^-{\alf P} \ar[d]_{f\otimes_R f}^{\simeq}
& P\otimes_R P \ar[d]_{f\otimes_R f}^{\simeq} \\
X\otimes_R X \ar[r]^-{\alf X}_{\simeq} & X\otimes_R X
}$$
shows that $\alf P$ is a quasiisomorphism; see Fact~\ref{fact01}.
Using Corollary~\ref{symm05b}\eqref{symm05bitem2}, Case 1 implies that
$\s_R(X)\simeq\s_R(P)=0$. 

\eqref{symm07item2d}$\implies$\eqref{symm07item2e}
and \eqref{symm07item2d}$\implies$\eqref{symm07item2a}
and \eqref{symm07item2d}$\implies$\eqref{symm07item2f}. 
Assume that $\s_R(X)\simeq 0$.

Case 1: $X$ is minimal.
In this case $X\otimes_R X$ is also minimal.
The bottom row of~\eqref{seqdiag02} is split exact, so
this implies that $\s_R(X)$
is also minimal.  Hence, the condition $\s_R(X)\simeq 0$ implies that
$\s_R(X)= 0$. 
Hence, the following sequence is split exact
$$0\to\Ker(\alf X)\xra{\jinj X}X\otimes_R X\xra{\alf X}X\otimes_R X\to 0.$$
Since each $R$-module $\ker(\alf X)_n$ is free of finite rank,
the additivity of rank implies that $\ker(\alf X)_n=0$ for all $n$, that is
$\ker(\alf X)=0$. The displayed sequence then shows that $\alf X$ is an isomorphism.

Assume for the rest of this case that $X\not\simeq 0$ and set
$i=\inf (X)$. If $i$ is even,
then Proposition~\ref{symm09} implies that $\infty = \inf(\s_R(X)) =
2i<\infty$, a contradiction.  Thus $i$ is
odd. 
As before, there is a formal power series $Q(t)=\sum_{i=0}^{\infty}r_it^i$ with nonnegative integer  
coefficients such that $r_0\neq 0$ and 
$P^R_X(t)=t^iQ(t)$.
Since $\s_R(X) = 0$ the following 
formal equalities are from~\eqref{betti02eq1}:
$$0
=P^R_{\s_R(X)}(t)=\textstyle\frac{1}{2}\left[P^R_{X}(t)^2+P^R_X(-t^2)\right]
=\frac{1}{2}\left[t^{2i}Q(t)^2-t^{2i}Q(-t^2)\right].$$
It follows that $Q(t)^2-Q(-t^2)=0$, so Lemma~\ref{poinc01}\eqref{poinc01item2}
implies that $Q(t)=1$.
This
implies that $P^R_X(t) = t^i$ and so $X\cong \shift^i R$.  

Case 2: the general case. Let $f\colon P\to X$ be a minimal free resolution.
Corollary~\ref{symm05b} implies that $\s_R(P)\simeq\s_R(X)\simeq 0$,
so Case 1 also implies that either $X\simeq P\simeq 0$ or
$X\simeq P\simeq\shift^{2n+1}R$ for some integer $n$.
Case 1 also implies that $\ker(\alf P)=0$ and $\alf P$ is an isomorphism.
The commutative diagram from~\eqref{symm04item2}
$$\xymatrix{
P\otimes_R P \ar[r]^-{\alf P}_-{\cong} \ar[d]_{f\otimes_R f}^{\simeq}
& P\otimes_R P \ar[d]_{f\otimes_R f}^{\simeq} \\
X\otimes_R X \ar[r]^-{\alf X} & X\otimes_R X
}$$
shows that $\alf X$ is a quasiisomorphism; see Fact~\ref{fact01}.
Since $\s_R(X)\simeq 0$, the bottom row of~\eqref{seqdiag02}
shows that $\iinj X$ is a quasiisomorphism. Since $\alf X$ is also
a quasiisomorphism, the commutativity of~\eqref{seqdiag02} shows that
$\qpro X$ is a quasiisomorphism as well. Hence, the 
top row of~\eqref{seqdiag02}
implies that $\ker(\alf X)\simeq 0$.

\eqref{symm07item2e}$\implies$\eqref{symm07item2d}. 
Argue as in the proof of the implication
\eqref{symm07item2d}$\implies$\eqref{symm07item2e}. 

\eqref{symm07item2f}$\implies$\eqref{symm07item2d}.
If $X\simeq 0$, then $\s_R(X)\simeq\s_R(0)=0$ by
Example~\ref{symm04item4}
and Corollary~\ref{symm05b}\eqref{symm05bitem3}.
If $X\simeq\shift^{2n+1}R$ for some integer $n$, then
Corollary~\ref{symm05b}\eqref{symm05bitem3}
explains the first quasiisomorphism in the next sequence
$$\s_R(X)
\simeq\s_R(\shift^{2n+1}R)
\simeq\s_R(\shift^{2n}(\shift R))
\simeq\shift^{4n}\s_R(\shift R)
\simeq 0.$$
The second quasiisomorphism is because 
of the isomorphism $\shift^{2n+1}R\cong \shift^{2n}(\shift R)$;
the third quasiisomorphism is from~\eqref{symm04item3};
and the last quasiisomorphism follows from Example~\ref{symm045}.
\end{proof}

\begin{cor} \label{s2fpd01}
Assume that $R$ is noetherian and local, and that 2 is a unit in $R$.
Let $X$ be a bounded-below complex of
finite rank free $R$-modules.
Then $\s_R(X)$ has finite projective dimension if and only if
$X$ has finite projective dimension.
\end{cor}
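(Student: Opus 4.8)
The plan is to translate finite projective dimension into boundedness of a minimal free resolution and then to read off that boundedness from the explicit ranks in Corollary~\ref{betti011}. First I would fix a minimal free resolution $\delta\colon P\res X$, which exists by Fact~\ref{fact0001''}, and recall that $X$ has finite projective dimension if and only if $P$ is bounded. The key observation is that $\s_R(P)$ is again a minimal free resolution, this time of $\s_R(X)$: by Corollary~\ref{betti011} each module $\s_R(P)_n$ is free of finite rank; since $P$ is minimal so is $P\otimes_R P$, and $\s_R(P)$ is a direct summand of $P\otimes_R P$ by Proposition~\ref{symm05}\eqref{symm05item0}, hence $\s_R(P)$ is minimal as well; and $\s_R(\delta)\colon\s_R(P)\res\s_R(X)$ is a quasiisomorphism by Corollary~\ref{symm05b}\eqref{symm05bitem2}. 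Thus, again by Fact~\ref{fact0001''}, $\s_R(X)$ has finite projective dimension if and only if $\s_R(P)$ is bounded. This reduces the corollary to the purely numerical equivalence: $P$ is bounded if and only if $\s_R(P)$ is bounded.

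The forward implication is immediate: if $P_l=0$ for $l\gg 0$, then $(P\otimes_R P)_n=0$ for $n\gg 0$, and the summand $\s_R(P)$ inherits this vanishing. For the converse I would argue using nonnegativity of ranks. Write $r_l=\rank_R(P_l)$ and $i=\inf(X)$, so that $r_l=0$ for $l<i$ and $r_i>0$. Suppose $\s_R(P)$ is bounded, say $\s_R(P)_n=0$ for all $n>N$. For any $n>\max(N,2i)$, Corollary~\ref{betti011} expresses $\rank_R(\s_R(P)_n)$ as the sum $\sum_{m<n/2}r_mr_{n-m}$ together with a nonnegative binomial term in the two even cases. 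Every summand is nonnegative and the total is $0$, so in particular the term with $m=i$ vanishes: $r_ir_{n-i}=0$, whence $r_{n-i}=0$ because $r_i>0$. As $n$ runs over all integers exceeding $\max(N,2i)$, the index $n-i$ runs over all sufficiently large integers, so $r_k=0$ for $k\gg 0$ and $P$ is bounded. This completes the reduction and hence the corollary.

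The only genuinely delicate points are bookkeeping rather than depth: one must check that $\s_R(P)$ really is \emph{the} minimal free resolution of $\s_R(X)$, so that Fact~\ref{fact0001''} detects finite projective dimension through boundedness; this rests on the summand description in Proposition~\ref{symm05}\eqref{symm05item0} and on the elementary fact that a direct summand of a minimal complex is minimal. Granting that, the arithmetic step is where the nonnegativity of the rank formula does all the work, letting us peel off the vanishing of $r_{n-i}$ from that of $\rank_R(\s_R(P)_n)$; I expect this extraction to be the crux, though it is short. One could instead run the argument through the generating-function identity $P^R_{\s_R(P)}(t)=\tfrac12[P^R_P(t)^2+P^R_P(-t^2)]$ of Remark~\ref{betti02} in the spirit of Lemma~\ref{poinc01}, but the direct rank computation avoids having to upgrade that lemma from constants to arbitrary polynomials.
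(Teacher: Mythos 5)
Your proposal is correct and follows essentially the same route as the paper: a minimal free resolution $P\res X$, the observation that $\s_R(P)$ is a minimal free resolution of $\s_R(X)$ (via the summand decomposition of Proposition~\ref{symm05} and Corollary~\ref{symm05b}), and the rank formula of Corollary~\ref{betti011} with nonnegativity of ranks. Your converse is just the contrapositive of the paper's (you extract $r_{n-i}=0$ from the vanishing of $\rank_R(\s_R(P)_n)$ via the single term $m=i$, whereas the paper picks $q>p>n$ with $r_pr_q\neq 0$ to show $\s_R(P)_{p+q}\neq 0$), so the two arguments are interchangeable.
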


\begin{proof}
Assume first that $\pd_R(X)$ is finite,
and let $P\res X$ be a bounded free resolution.
It follows that $P\otimes_R P$ is a bounded complex of free $R$-modules.
Hence, the isomorphism $P\otimes_R P\cong \s_R(P)\oplus\im(\alf P)$
from Proposition~\ref{symm05}\eqref{symm05item1} implies that
$\s_R(P)$ is a bounded complex of free $R$-modules. 
The quasiisomorphism $\s_R(X)\simeq\s_R(P)$
from Corollary~\ref{symm05b}\eqref{symm05bitem3} implies that
$\s_R(X)$ has finite projective dimension.

For the converse, assume that $X$ has infinite projective dimension.
Let $P\res X$ be a minimal free resolution, which is necessarily unbounded.
As we have noted previously, the fact that $P$ is minimal implies
that $\s_R(P)\res\s_R(X)$ is a minimal free resolution,
so it suffices to show that $\s_R(P)$ is unbounded;
see Fact~\ref{fact0001''}.

Set
$r_n=\rank_R(P_n)$ for each integer $n$.
Since $P$ is unbounded, we know  that, for each
integer $n$, there exist integers $p$ and $q$ such that
$q>p>n$ and such that the free $R$-modules $P_p$ and $P_q$
are nonzero, that is, such that $r_pr_q\neq 0$.
The inequality $q>p$ implies $p<(p+q)/2$.
For each $n\geq 0$, we then have $p+q>2n$
and 
$$\rank_R(\s_R(P)_{p+q})
\geq\sum_{m<(p+q)/2}r_mr_{p+q-m}
\geq r_pr_q>0.$$
The first inequality is from
Corollary~\ref{betti011};
the second inequality follows from the inequality $p<(p+q)/2$;
and the third inequality follows from the assumption $r_pr_q\neq 0$.
This shows that for each $n\geq 0$,
that is an integer $m=p+q>n$ such that $\s_R(P)_m\neq 0$. 
This means that $\s_R(P)$ is unbounded, as desired.
\end{proof}

The final result of this section is a refinement of the previous result.
It characterizes the complexes $X$ such that $\s_R(X)\simeq
\shift^j R$ for some integer $j$.

\begin{cor} \label{s2fpd02}
Assume that $R$ is noetherian and local, and that 2 is a unit in $R$.
Let $X$ be a bounded-below complex of
finite rank free $R$-modules.
The folowing conditions are equivalent:
\begin{enumerate}[\quad\rm(i)]
\item \label{s2fpd02c}
$X\simeq \shift^{2n} R$ for some $n$ or $X \simeq (\shift^{2n+1}
R)\oplus(\shift^{2m+1}R)$ for some $n$ and $m$; 
\item \label{s2fpd02a}
$\s_R(X)\simeq
\shift^j R$ for some even integer $j$;
\item \label{s2fpd02b}
$\s_R(X)\simeq
\shift^j R$ for some integer $j$.
\end{enumerate}
\end{cor}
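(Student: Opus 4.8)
The plan is to establish the chain (i)$\Rightarrow$(ii)$\Rightarrow$(iii)$\Rightarrow$(i). The middle implication is immediate, since an even integer is in particular an integer; so all the content lies in (i)$\Rightarrow$(ii) and in (iii)$\Rightarrow$(i).

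For (i)$\Rightarrow$(ii) I would compute $\s_R(X)$ on each model complex using the tools already assembled. If $X\simeq\shift^{2n}R$, then Corollary~\ref{symm05b}\eqref{symm05bitem3} gives $\s_R(X)\simeq\s_R(\shift^{2n}R)$, and~\eqref{symm04item3} together with Example~\ref{symm04item4} identifies this with $\shift^{4n}\s_R(R)\cong\shift^{4n}R$, so $j=4n$ is even. If instead $X\simeq(\shift^{2n+1}R)\oplus(\shift^{2m+1}R)$, then Corollary~\ref{symm05b}\eqref{symm05bitem3} and the direct-sum formula of Proposition~\ref{s2sum01}\eqref{s2sum01b} give
\[
\s_R(X)\simeq\s_R(\shift^{2n+1}R)\oplus\bigl((\shift^{2n+1}R)\otimes_R(\shift^{2m+1}R)\bigr)\oplus\s_R(\shift^{2m+1}R).
\]
By Example~\ref{symm045} the two outer summands vanish, while the middle term is $\shift^{2(n+m+1)}R$, so $\s_R(X)\simeq\shift^jR$ with $j=2(n+m+1)$ even.

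The heart of the matter is (iii)$\Rightarrow$(i). Since $\shift^jR$ has finite projective dimension, Corollary~\ref{s2fpd01} gives $\pd_R(X)<\infty$; also $X\not\simeq 0$, since otherwise $\s_R(X)\simeq 0\not\simeq\shift^jR$. Passing to a minimal free resolution $P\res X$, the complex $\s_R(P)\res\s_R(X)$ is again minimal (as in the proof of Corollary~\ref{s2fpd01}), and since $\shift^jR$ is its own minimal free resolution, Fact~\ref{fact0001''} forces $\s_R(P)\cong\shift^jR$. Writing $r_l=\rank_R(P_l)$, this says $\rank_R(\s_R(P)_n)=0$ for $n\neq j$ and equals $1$ for $n=j$. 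Substituting into the explicit rank formula of Corollary~\ref{betti011} converts the problem into a combinatorial statement about the finite nonempty support $S=\{l:r_l\neq 0\}$: in every degree $n\neq j$ both the sum $\sum_{m<n/2}r_mr_{n-m}$ and the binomial contribution $\binom{r_{n/2}+1}{2}$ or $\binom{r_{n/2}}{2}$ must vanish, while in degree $j$ exactly one unit of rank survives.

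I would then analyze $S$ starting from $i=\min S$. In degree $2i$ the sum-term is empty, so the rank reduces to $\binom{r_i+1}{2}$ (if $i$ is even) or $\binom{r_i}{2}$ (if $i$ is odd). If $i$ is even this is positive, forcing $j=2i$ and $r_i=1$; testing degree $i+i'$ for a hypothetical second element $i'>i$ of $S$ then contradicts $j=2i$, so $S=\{i\}$ and $X\simeq\shift^{2n}R$. If $i$ is odd with $r_i\geq 2$, the same computation forces $r_i=2$, $j=2i$, and $S=\{i\}$, giving $X\simeq(\shift^iR)\oplus(\shift^iR)$. The subtle case is $i$ odd with $r_i=1$: here the degree-$2i$ term vanishes, so the generating-function Lemma~\ref{poinc01}---which applies only when $P^R_X(t)^2\pm P^R_X(-t^2)$ is constant---gives nothing, and if $S=\{i\}$ then $\s_R(P)=0$, a contradiction. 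Hence $S$ has a second smallest element $i'$; inspecting degree $2i'$ shows $i'$ must be odd, and a pair-counting argument in degrees $i+i'$ and $i+i''$ pins down $S=\{i,i'\}$ with both odd and each of rank $1$, so $X\simeq(\shift^{2n+1}R)\oplus(\shift^{2m+1}R)$. The main obstacle is precisely this last case, where Lemma~\ref{poinc01} is silent and the answer must be extracted from the finer rank bookkeeping supplied by Corollary~\ref{betti011}.
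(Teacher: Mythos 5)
Your proposal is correct, and while your (i)$\Rightarrow$(ii) argument and the initial reduction for (iii)$\Rightarrow$(i) (pass to a minimal free resolution $P$, note that $\s_R(P)$ is again minimal, and use Fact~\ref{fact0001''} to upgrade $\s_R(P)\simeq\shift^j R$ to an isomorphism) coincide with the paper's, your endgame for (iii)$\Rightarrow$(i) is genuinely different. The paper encodes the ranks in the generating-function identity~\eqref{betti02eq1}, uses Proposition~\ref{symm09} to get $j\geq 2i$ where $i=\inf(X)$, dispatches the case $j=2i$ via Lemma~\ref{poinc01} (the cases $Q(t)^2\pm Q(-t^2)=2$), and in the case $j>2i$ extracts $r_i=1$, $j>2i+1$, and $r_{i+1}=0$ from low-degree coefficients, then splits $X\cong\shift^i R\oplus Y$ and invokes the companion Theorem~\ref{symm07''} to identify $Y\simeq\shift^{2m+1}R$. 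You instead read everything off degreewise from Corollary~\ref{betti011} and analyze the support $S$ of the minimal complex directly, bypassing Lemma~\ref{poinc01}, Proposition~\ref{symm09}, and Theorem~\ref{symm07''} altogether; I checked your case analysis and it works: degree $2i$ forces $j=2i$ with $r_i=1$ (for $i$ even) or $r_i=2$ (for $i$ odd with $r_i\geq 2$), and degree $i+i'$ kills any second support element in those cases; in the remaining case ($i$ odd, $r_i=1$), degree $i+i'$ gives $j=i+i'$ and $r_{i'}=1$, degree $2i'>j$ forces $i'$ odd (else $\binom{r_{i'}+1}{2}\geq 1$), and degree $i+i''$ excludes a third element. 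One detail to make explicit when concluding $P\cong\shift^i R\oplus\shift^{i'}R$: since $i$ and $i'$ are both odd, $i'\geq i+2$, so the two nonzero modules sit in non-adjacent degrees and the differential of $P$ is automatically zero. What the paper's route buys is reuse of structural machinery already in place (notably Theorem~\ref{symm07''}); what yours buys is a self-contained elementary support analysis, at the cost of more case bookkeeping. Also, your appeal to Corollary~\ref{s2fpd01} for finiteness of $S$ is harmless but unnecessary: bounded-belowness of $P$ already supplies a minimum and a second-smallest element of $S$, which is all your argument uses.
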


\begin{proof}
\eqref{s2fpd02c}$\implies$\eqref{s2fpd02a}.
If $X\simeq \shift^{2n}R$, 
then we have
$$\s_R(X) \simeq \s_R(\shift^{2n}R) \cong
\shift^{4n}\s_R(R) \cong\shift^{4n}R$$
by~\eqref{symm04item3}, Example~\ref{symm04item4}  and
Corollary~\ref{symm05b}\eqref{symm05bitem3}.
If $X\simeq (\shift^{2n+1}
R)\oplus(\shift^{2m+1}R)$, then Proposition~\ref{s2sum01} implies
$$\s_R(X) \simeq \s_R(\shift^{2n+1} R)\oplus \left[(\shift^{2n+1} R)\otimes_R
(\shift^{2m+1} R)\right]\oplus \s_R(\shift^{2m+1} R).$$  Example~\ref{symm045}
implies that the first and last summands on the right side are 0, so   
$$\s_R(X)
\cong \shift^{2n+1} R\otimes_R \shift^{2m+1} R\cong \shift^{2n+2m+2}
R.$$

\eqref{s2fpd02a}$\implies$\eqref{s2fpd02b}.
This is trivial.

\eqref{s2fpd02b}$\implies$\eqref{s2fpd02c}.
Assume that $\s_R(X)\simeq\shift^j_{} R$, which implies $j = \inf (\s_R(X))$. 
Use Corollary~\ref{symm05b}\eqref{symm05bitem3} to replace $X$ with
a minimal free resolution in order to assume that $X$ is minimal.
As we have noted before, this implies that 
$\s_R(X)$ is minimal, so the quasiisomorphism
$\s_R(X)\simeq \shift^j_{} R$ implies $\s_R(X)\cong\shift^j_{} R$;
see Fact~\ref{fact0001''}.

For each integer $n$, set $r_n=\rank_R(X_n)$.
Also, set $i = \inf (X)$, and note that Proposition~\ref{symm09} implies that
$j\geq 2i$.  
Write $Q(t)=\sum_{n=0}^{\infty}r_{n-i}t^n$; this 
is a formal power series with nonnegative integer coefficients and constant term $r_i\geq 1$
such that
$P_X^R(t) = t^i Q(t)$. 
Since $\s_R(X) \cong \shift^j R$, equation~\eqref{betti02eq1} can be
written as 
\begin{equation}\label{s2fpd02pfeq01}
t^j =\textstyle
\frac{1}{2}\left[(t^iQ(t))^2+(-t^2)^iQ(-t^2)\right]=
\frac{1}{2}t^{2i}\left[Q(t)^2+(-1)^iQ(-t^2)\right].
\end{equation}

Case 1: $j = 2i$.  
In this case,
equation~\eqref{s2fpd02pfeq01} then reads as 
$$t^{2i} = \textstyle
\frac{1}{2}t^{2i}\left[Q(t)^2+(-1)^iQ(-t^2)\right]
$$
and so 
$2 = Q(t)^2+(-1)^i Q(-t^2)$.
Lemma~\ref{poinc01} implies that 
$$Q(t)=
\begin{cases}
1 & \text{if $i$ is even} \\
2 & \text{if $i$ is odd.}
\end{cases}
$$
When $i$ is even, this translates to $P^R_X(t)=t^i$ and so $X\cong\shift^iR=\shift^{2n}R$
where $n=i/2$.
When $i$ is odd, we have $P^R_X(t)=2$ and so 
$X \cong\shift^iR^2 \cong\shift^{2n+1}R\oplus\shift^{2n+1}R$
where $n=(i-1)/2$.

Case 2: $j>2i$.
In this case, Proposition~\ref{symm09} implies that $i$ is odd,
and equation~\eqref{s2fpd02pfeq01} translates as
\begin{align}
2t^{j-2i}
& = Q(t)^2 - Q(-t^2)\notag \\
2t^{j-2i}
&=(r_i^2 - r_i)+2r_{i+1}r_it+(2r_{i+2}r_i + r_{i+1}^2 +
r_{i+1})t^2+\cdots. \label{s2fpd02pfeq04}
\end{align}
Since $j>2i$, we equate coefficients in degree $0$ to find 
$0 = r_i^2 - r_i$, and so $r_i = 1$. Thus, equation~\eqref{s2fpd02pfeq04} reads as
\begin{equation}
2t^{j-2i}=2r_{i+1}t+(2r_{i+2} + r_{i+1}^2 +
r_{i+1})t^2+\cdots. \label{s2fpd02pfeq03}
\end{equation}

We claim that $j>2i+1$. Indeed, supposing that $j\leq 2i+1$,
our assumption $j>2i$ implies
$j = 2i+1$. Equating degree $1$ coefficients in equation~\eqref{s2fpd02pfeq03} yields
$r_{i+1}=1$. 
The coefficients in degree 2 show that 
$$0 = 2r_{i+2}r_i + r_{i+1}^2 +
r_{i+1} = 2r_{i+2} + 2.$$  
Hence $r_{i+2} = -1$, which is a
contradiction.  

Since we have $j > 2i +1$, the
degree $1$ coefficients in equation~\eqref{s2fpd02pfeq03} imply 
$r_{i+1} = 0$.  
It follows that 
\begin{equation}
X\cong \shift^i R \oplus Y
\label{s2fpd02pfeq02}
\end{equation}
where $Y$ is a bounded-below minimal complex of 
finitely generated free $R$-modules such that $Y_n=0$ for all $n<i+2$. 
With the isomorphism in~\eqref{s2fpd02pfeq02},
Proposition~\ref{s2sum01} gives the second isomorphism in the next sequence
$$ \shift^jR\cong\s_R(X) \cong \s_R(\shift^i R) \oplus \left[(\shift^i R) \otimes_R Y\right] \oplus
\s_R(Y) \cong  \shift^i  Y \oplus
\s_R(Y).$$
The final isomorphism comes from Example~\ref{symm045}
since  $i$ is odd. In particular, it follows that $Y\not\simeq 0$.
The complex $\shift^jR$ is indecomposable because $R$ is local,
so the displayed sequence implies that $\s_R(Y) =0$ and $\shift^iY\simeq\shift^jR$.
Because of the conditions $\s_R(Y) =0$ and $Y\not\simeq 0$,
Theorem~\ref{symm07''} implies that  $Y\simeq
\shift^{2m+1} R$ for some $m$.  Hence, the isomorphism in~\eqref{s2fpd02pfeq02}
reads as 
$X\cong \shift^{2n+1} R \oplus \shift^{2m+1} R$
where $n=(i-1)/2$, as desired.
\end{proof}

\section{Examples}\label{sec05}

We begin this section with three explicit computations of 
the complexes $\s_R(X)$ and $\sw_R(X)$ and their homologies. 
As a consequence, we show that Buchbaum and Eisenbud's construction
differs from those in~\cite{dold:hnafa, tchernev:frpf}.
We also provide examples
showing the need for certain hypotheses in the results of the previous sections.

\begin{ex} \label{symm03}
Fix an element $x\in R$ and let $K$ denote the Koszul complex $K^R(x)$
which has the following form,
where the basis is listed in each degree
\begin{equation} \label{K02} 
K=\qquad
0\to
\hspace{-2mm}\underbrace{R}_{e_1}\hspace{-2mm}
\xra{(x)}
\hspace{-1mm}\underbrace{R}_{e_0} \hspace{-1mm}
\to 0.
\end{equation}
The tensor product $K\otimes_R K$ has the  form
$$K\otimes_RK=\qquad
0\to 
\hspace{-2mm}\underbrace{R}_{e_1\otimes e_1}\hspace{-2mm}
\xra{\left(\begin{smallmatrix} x \\ -x\end{smallmatrix}\right)}
\hspace{-2mm}\underbrace{R^2}_{\begin{smallmatrix}e_0\otimes e_1 \\
e_1\otimes e_0\end{smallmatrix}}\hspace{-2mm}
\xra{\left(\begin{smallmatrix} x & x\end{smallmatrix}\right)}
\hspace{-2mm}\underbrace{R}_{e_0\otimes e_0}\hspace{-2mm}
 \to 0.
$$
Using this representation, the exact sequence in~\eqref{symm04item1}
has the  form
$$
\xymatrix@R=1em{
0 \ar[r] & \ker(\alf X) \ar[r] & K\otimes_RK \ar[r]^{\alf K}
& K\otimes_RK \ar[r] & \sw_R(K) \ar[r]  & 0 \\
& 0 \ar[dd] & 0 \ar[dd] & 0 \ar[dd] & 0 \ar[dd] \\ \\
0 \ar[r]
& \ann_R(2) \ar[r] \ar[dd]_{(x)}
& R \ar[r]^{(2)} \ar[dd]_{\left(\begin{smallmatrix} x \\ -x\end{smallmatrix}\right)}
& R \ar[r] \ar[dd]_{\left(\begin{smallmatrix} x \\ -x\end{smallmatrix}\right)}
& R/(2) \ar[r] \ar[dd]_{(0)}
& 0 \\ \\
0 \ar[r]
& R \ar[r]^{\left(\begin{smallmatrix} 1 \\ 1\end{smallmatrix}\right)} \ar[dd]_{(2x)}
& R^2 \ar[r]^<<<<<<<{\left(\begin{smallmatrix} 1 & -1 \\ -1 & 1\end{smallmatrix}\right)}
\ar[dd]_{\left(\begin{smallmatrix} x & x\end{smallmatrix}\right)}
& R^2 \ar[r]^{\left(\begin{smallmatrix} 1 & 1\end{smallmatrix}\right)}
\ar[dd]_{\left(\begin{smallmatrix} x & x\end{smallmatrix}\right)}
& R \ar[r] \ar[dd]_{(x)}
& 0 \\ \\
0 \ar[r]
& R \ar[r]^{(1)} \ar[dd]
& R \ar[r]^{(0)} \ar[dd]
& R \ar[r]^{(1)} \ar[dd]
& R \ar[r] \ar[dd]
& 0 \\ \\
& 0 & 0 & 0 & 0.
}
$$
From the rightmost column of this diagram, we have
\begin{align*}
\HH_2(\sw_R(K))&\cong R/(2)
&\HH_1(\sw_R(K))&\cong \ann_R(x)
&\HH_0(\sw_R(K))&\cong R/(x)
\end{align*}
and $\HH_i(\sw_R(K))=0$ when $i\notin \{0,1,2\}$.

A similar computation shows that 
$$\s_R(K)=\qquad
0\to R\xra{(x)}R\to 0$$
and thus
\begin{align*}
\HH_1(\s_R(K))&\cong \ann_R(x)
&\HH_0(\s_R(K))&\cong R/(x)
\end{align*}
and $\HH_i(\s_R(K))=0$ when $i\notin \{0,1\}$.
\end{ex}

\begin{ex} \label{koszul01}
Fix elements $x,y\in R$  and let $K$ denote
the Koszul complex $K^R(x,y)$ which has the following form,
where the ordered basis is listed in each degree
\begin{equation} \label{K01} 
K=\qquad
0\to
\hspace{-2mm}\underbrace{R}_{e_2}\hspace{-2mm}
\xra{\left(\begin{smallmatrix} y \\ -x\end{smallmatrix}\right)}
\hspace{-1mm}\underbrace{R^2}_{\begin{smallmatrix}e_{11} \\ e_{12}\end{smallmatrix}}
\hspace{-1mm}
\xra{\left(\begin{smallmatrix} x & y \end{smallmatrix}\right)}
\hspace{-1mm}\underbrace{R}_{e_0} \hspace{-1mm}
\to 0.
\end{equation}
Using the same format, the complex $K\otimes_R K$ has the  form
$$K\otimes_R K= \qquad
0\to
\hspace{-1mm}\underbrace{R}_{e_2\otimes e_2}\hspace{-1mm}
\xra{\partial^{K\otimes_R K}_4}
\hspace{-2mm}\underbrace{R^4}_{\begin{smallmatrix}e_{2}\otimes e_{11}\\
e_{2}\otimes e_{12}\\e_{11}\otimes e_{2}\\e_{12}\otimes e_{2}\end{smallmatrix}}\hspace{-2mm}
\xra{\partial^{K\otimes_R K}_3}
\hspace{-2mm}\underbrace{R^6}_{\begin{smallmatrix}e_{2}\otimes e_{0}\\
e_{11}\otimes e_{11}\\e_{11}\otimes e_{12}\\
e_{12}\otimes e_{11}\\e_{12}\otimes e_{12}\\e_{0}\otimes e_{2}\end{smallmatrix}} \hspace{-2mm}
\xra{\partial^{K\otimes_R K}_2}
\hspace{-2mm}\underbrace{R^4}_{\begin{smallmatrix}e_{11}\otimes e_{0}\\
e_{12}\otimes e_{0}\\e_{0}\otimes e_{11}\\e_{0}\otimes e_{12}\end{smallmatrix}}\hspace{-2mm}
\xra{\partial^{K\otimes_R K}_1}
\hspace{-2mm}\underbrace{R}_{e_0\otimes e_0} \hspace{-2mm}
\to 0
$$
with differentials given by the following matrices:
\begin{align*}
\partial^{K\otimes_R K}_4
&=\left(\begin{smallmatrix} y \\ -x \\ y \\ -x \end{smallmatrix}\right)
&\partial^{K\otimes_R K}_3
&=\left(\begin{smallmatrix} x & y & 0 & 0 \\ y & 0 & -y & 0 \\ 0 & y & x & 0 \\
-x & 0 & 0 & -y \\ 0 & -x & 0 & x \\ 0 & 0 & x & y \end{smallmatrix}\right) \\
\partial^{K\otimes_R K}_2
&=\left(\begin{smallmatrix} y & -x & -y & 0 & 0 & 0 \\
-x & 0 & 0 & -x & -y & 0 \\ 0 & x & 0 & y & 0 & y \\
0 & 0 & x & 0 & y & -x \end{smallmatrix}\right)
& \partial^{K\otimes_R K}_1
&=(x \,\,\,\,  y \,\,\,\,  x \,\,\,\,  y).
\end{align*}
Under the same bases,
the morphism $\alf K\colon K\otimes_R K\to K\otimes_R K$ is described
by the following matrices: 
\begin{align*}
 \alf K_3
&=\left(\begin{smallmatrix} 1 & 0 & -1 & 0 \\ 0 & 1 & 0 & -1 \\
-1 & 0 & 1 & 0 \\ 0 & -1 & 0 & 1 \end{smallmatrix}\right) 
&\alf K_2
&=\left(\begin{smallmatrix}
1 & 0 & 0 & 0 & 0 & -1 \\
0 & 2 & 0 & 0 & 0 & 0 \\
0 & 0 & 1 & 1 & 0 & 0 \\
0 & 0 & 1 & 1 & 0 & 0 \\
0 & 0 & 0 & 0 & 2 & 0 \\
-1 & 0 & 0 & 0 & 0 & 1
\end{smallmatrix}\right)
\\
\alf K_1
&=\left(\begin{smallmatrix}  1 & 0 & -1 & 0 \\ 0 & 1 & 0 & -1 \\
-1 & 0 & 1 & 0 \\ 0 & -1 & 0 & 1
\end{smallmatrix}\right)
&\alf K_4
&=(0)
=\alf K_0.
\end{align*}
As in Example~\ref{symm03}, it follows that $\s_R(K)$ has the form
$$\s_R(K)= \qquad
0\to \hspace{-1mm}\underbrace{R}_{f_4}\hspace{-1mm}
\xra{\partial^{\s_R(K)}_4}
\hspace{-1mm}\underbrace{R^2}_{\begin{smallmatrix}f_{31}\\
f_{32}\end{smallmatrix}} \hspace{-1mm} \xra{\partial^{\s_R(K)}_3}
\hspace{-1mm}\underbrace{R^2}_{\begin{smallmatrix}f_{21}\\
f_{22}\end{smallmatrix}} \hspace{-1mm} \xra{\partial^{\s_R(K)}_2}
\hspace{-1mm}\underbrace{R^2}_{\begin{smallmatrix}f_{11}\\
f_{12}\end{smallmatrix}} \hspace{-1mm} \xra{\partial^{\s_R(K)}_1}
\hspace{-1mm}\underbrace{R}_{f_0} \hspace{-1mm} \to 0
$$
where the basis vectors are described as
\begin{align*}
f_4
&=\ol{e_2\otimes e_2} 
&f_{31}
&=\ol{e_{2}\otimes e_{11}}=\ol{e_{11}\otimes e_{2}} \\
f_{32}
&=\ol{e_{2}\otimes e_{12}}=\ol{e_{12}\otimes e_{2}} 
&f_{21}
&=\ol{e_{2}\otimes e_{0}}=\ol{e_{0}\otimes e_{2}} \\
f_{22}
&=\ol{e_{11}\otimes e_{12}}=-\ol{e_{12}\otimes e_{11}} 
&f_{11}
&=\ol{e_{11}\otimes e_{0}}=\ol{e_{0}\otimes e_{11}} \\
f_{12}
&=\ol{e_{12}\otimes e_{0}}=\ol{e_{0}\otimes e_{12}} 
&f_0
&=\ol{e_0\otimes e_0}.
\end{align*}
(Note also that $\ol{e_{11}\otimes e_{11}}=0=\ol{e_{12}\otimes e_{12}}$.)
Under these bases, the differentials $\partial^{\s_R(K)}_n$ are described
by the following matrices:
\begin{equation}
\begin{split} \label{s2K01}
\begin{aligned}
\partial^{\s_R(K)}_4
&=\begin{pmatrix} 2y \\ -2x \end{pmatrix} \\
\partial^{\s_R(K)}_2
&=\begin{pmatrix} y & -y \\ -x & x \end{pmatrix}
\end{aligned}
\qquad \qquad \qquad
\begin{aligned}
\partial^{\s_R(K)}_3
&=\begin{pmatrix} x & y \\ x & y \end{pmatrix}    \\
\partial^{\s_R(K)}_1
&=\begin{pmatrix} x & y  \end{pmatrix}.
\end{aligned}
\end{split}
\end{equation}
Similar computations show that $\sw_R(K)\cong\s_R(K)\oplus\shift^2(R/(2))^2$.
\end{ex}

\begin{ex} \label{koszul01'}
Let $x,y\in R$  be an $R$-regular sequence and
continue with the notation of Example~\ref{koszul01}.
We verify the following isomorphisms:
\begin{align*}
\HH_0(S^0_R(K))
&\cong\HH_2(\s_R(K))
\cong R/(x,y) 
&\HH_1(\s_R(K))
&=0\\
\HH_3(\s_R(K))
&\cong R/(2)
&\HH_4(\s_R(K))
&\cong \ann_R(2).
\end{align*}

The computation of $\HH_0(\s_R(K))$ follows from the
description of $\partial^{\s_R(K)}_1$ in~\eqref{s2K01}.

For $\HH_1(\s_R(K))$, the second equality in the following sequence
comes from the exactness of $K$ in degree 1
$$\ker\left(\partial^{\s_R(K)}_1 \right)=\ker \left(\partial^K_1 \right)=\im \left(\partial^K_2 \right)
=\Span_R\left\{\begin{pmatrix} y \\ -x \end{pmatrix}\right\}
=\im \left(\partial^{\s_R(K)}_2\right)$$
and the others come from the
descriptions of $K$ and $\s_R(K)$ in~\eqref{K01} and~\eqref{s2K01}.

For $\HH_2(\s_R(K))$,
use the fact that $x$ is $R$-regular to check the first equality
in the next display;
the others follow from~\eqref{s2K01}.
\begin{align*}
\ker \left(\partial^{\s_R(K)}_2 \right)
&=\Span_R\left\{\begin{pmatrix} 1 \\ 1 \end{pmatrix}\right\} \\
\im \left(\partial^{\s_R(K)}_3 \right)
&=\Span_R\left\{\begin{pmatrix} x \\ x \end{pmatrix},
\begin{pmatrix} y \\ y \end{pmatrix}\right\}
=(x,y)\Span_R\left\{\begin{pmatrix} 1 \\ 1 \end{pmatrix}\right\}
\end{align*}
The isomorphism
$\HH_2(\s_R(K))
\cong R/(x,y)$ now follows.

For $\HH_3(\s_R(K))$, the second equality in the following sequence
comes from the exactness of $K$ in degree 1
\begin{align*}
\ker\left(\partial^{\s_R(K)}_3 \right)
&
=\ker \left(\partial^K_1\right) 
=\im \left(\partial^K_2 \right)
=\Span_R\left\{\begin{pmatrix} y \\ -x \end{pmatrix}\right\}
\\
\im \left(\partial^{\s_R(K)}_4\right)
&=(2)\Span_R\left\{\begin{pmatrix} y \\ -x \end{pmatrix}\right\}
\end{align*}
and the others come from the
descriptions of $K$ and $\s_R(K)$ in~\eqref{K01} and~\eqref{s2K01}.
The isomorphism
$\HH_3(\s_R(K))
\cong R/(2)$ now follows.

Similarly, for $\HH_4(\s_R(K))$, we have
$$\HH_4(\s_R(K))=\ker\left(\partial^{\s_R(K)}_4 \right)
=(\ker\left(\partial^{K}_2 \right)\colon 2)=(0:_R 2)=\ann_R(2).$$
This completes the example.
\end{ex}

As a first consequence of the previous computations, we next observe that 
$\s_R(X)$ is generally not isomorphic to Dold and Puppe's~\cite{dold:hnafa}
construction $\cat{D}_{\s}(X)$ and not isomorphic to Tchernev and Weyman's~\cite{tchernev:frpf}
construction $\catc_{\s}(X)$. 

\begin{ex} \label{ex0401}
Assume that 2 is a unit in $R$.
Fix an element $x\in R$ and let $K$ denote the Koszul complex $K^R(x)$.
Example~\ref{symm03} yields the following computation of $\s_R(K)$
$$\xymatrix@R=1em{
\hspace{17mm}\s_R(K)=\hspace{-5mm}
&
& 0\ar[r] 
& R\ar[r]^-{x} 
& R\ar[r] 
& 0 \\
\cat{D}_{\s}(K)\cong \catc_{\s}(K)=\hspace{-5mm}
& 0\ar[r] 
& R\ar[r]^-{\left(\begin{smallmatrix}1 \\ -x\end{smallmatrix}\right)} 
& R^2 \ar[r]^-{(x^2\,\,\,\, x)} 
& R\ar[r] 
& 0.
}$$
The fact that $\cat{D}_{\s}(K)$ and $\catc_{\s}(K)$ have the displayed 
form can be deduced from~\cite[(11.2) and (14.4)]{tchernev:frpf};
the maps were computed for us by Tchernev.
In particular, in this case we have $\cat{D}_{\s}(K)\cong \catc_{\s}(K)\not\cong\s_R(K)$.

More generally, if we have
$$X=\quad 0\to R^m\to R^n\to 0$$
then Corollary~\ref{betti011} and~\cite[(11.2) and (14.4)]{tchernev:frpf} yield
$$\xymatrix@R=1em{
\hspace{17mm}\s_R(X)=\hspace{-5mm}
& 0\ar[r]
& R^{\binom{m}{2}}_{} \ar[r]
& R^{mn}_{} \ar[r]
& R^{\binom{n+1}{2}}_{} \ar[r]
& 0 \\
\cat{D}_{\s}(X)\cong \catc_{\s}(X)=\hspace{-5mm}
& 0 \ar[r]
& R^{m^2}_{} \ar[r]
& R^{\binom{m+1}{2}+mn}_{} \ar[r]
& R^{\binom{n+1}{2}}_{} \ar[r]
& 0.
}$$
Hence, we have $\catc_{\s}(X)\cong\s_R(X)$ if and only if $m=0$, i.e.,
if and only if $X\cong R^n$.
\end{ex}

We next show why we need to assume 
that $X$ and $Y$ are bounded-below complexes
of projective $R$-modules in Corollary~\ref{symm05b}.
It also shows that 
$\s_R(X)$ can have nontrivial homology, even when $X$ is a minimal free 
resolution of a module of finite projective dimension.

\begin{ex} \label{koszul01''}
Let $x,y\in R$  be an $R$-regular sequence and
continue with the notation of Example~\ref{koszul01}.
The computations  in Example~\ref{koszul01'} 
show that $\HH_2(\s_R(K))
\cong R/(x,y)\neq 0=\HH_2(\s_R(R/(x,y)))$,
and so  $\s_R(K)\not\simeq\s_R(R/(x,y))$ even though
$K\simeq R/(x,y)$.
\end{ex}

The next example shows why we need to assume that 2 is a unit in
$R$ for Theorem~\ref{symm05a} and
Corollaries~\ref{symm05b} and~\ref{loc01'}.

\begin{ex} \label{symm05c}
Assume that 2 is not a unit in $R$ and let $K$ denote the Koszul
complex $K^R(1,1)$. Then $K$ is split exact, so the zero map $z\colon
K\to K$ is a homotopy equivalence, it is homotopic to $\id_K$, and
it is a quasiisomorphism.  Example~\ref{koszul01} 
shows that $\HH_3(\s_R(K))=R/(2)\neq 0$. On the other hand, the
morhpism $\s_R(z)\colon \s_R(K)\to\s_R(K)$ is the zero morphism, so
the nonvanishing of $\HH_2(\s_R(K))$ implies that $\s_R(z)$ is not a
quasiisomorphism.  It follows that $\s_R(z)$ is neither a homotopy
equivalence nor homotopic to $\id_{\s_R(K)}$.  This shows 
why we must assume that 2 is a unit in $R$ for
Theorem~\ref{symm05a} and
Corollary~\ref{symm05b}\eqref{symm05bitem2}. For
Corollary~\ref{symm05b}\eqref{symm05bitem3} simply note that
$K\simeq 0$ and $\s_R(K)\not\simeq 0\simeq\s_R(0)$.
For  Corollary~\ref{loc01'}, note that this shows that $\supp_R(\s_R(K))\neq
\emptyset=\supp_R(K)$.
\end{ex}

Our next example shows that the functors $\sw_R(-)$ and $\s_R(-)$ are not additive, 
even when 2 is a unit in $R$ and 
we restrict to bounded complexes of finite rank free $R$-modules.

\begin{ex} \label{notadd01}
Let $X$ and $Y$ be nonzero $R$-complexes. Consider the natural
surjections and injections
\begin{align*}
X\oplus Y\xra{\tau_1}X\xra{\epsilon_1}X\oplus Y
&&X\oplus Y\xra{\tau_2}Y\xra{\epsilon_2}X\oplus Y
\end{align*}
and set $f_i=\epsilon_i\tau_i\colon X\oplus Y\to X\oplus Y$.
The equality $f_1+f_2=\id_{X\oplus Y}$ is immediate.

We claim that $\sw_R(f_1+f_2)\neq\sw_R(f_1)+\sw_R(f_2)$.
To see this, first note that the equalities
$\sw_R(f_1+f_2)=\sw_R(\id_{X\oplus Y})=\id_{\sw_R(X\oplus Y)}$
show that it suffices to verify
$\sw_R(f_1)+\sw_R(f_2)\neq\id_{\sw_R(X\oplus Y)}$.
One checks that there is a commutative diagram
$$
\xymatrix{
(X\oplus Y)\otimes_R (X\oplus Y) \ar[r]^-{\cong} \ar[dd]_{f_1\otimes_R f_1}
& (X\otimes_RX) \oplus (X\otimes_RY) \oplus (Y\otimes_RX) \oplus (Y\otimes_RY)
\ar[dd]^{\left(\begin{smallmatrix}
\id_{X\otimes_R X} & 0 & 0 & 0 \\
0 & 0 & 0 & 0 \\
0 & 0 & 0 & 0 \\
0 & 0 & 0 & 0
\end{smallmatrix}\right)} \\
\\
(X\oplus Y)\otimes_R (X\oplus Y) \ar[r]^-{\cong}
& (X\otimes_RX) \oplus (X\otimes_RY) \oplus (Y\otimes_RX) \oplus (Y\otimes_RY)
}
$$
wherein the horizontal maps are the natural distributivity isomorphisms.
The proof of Proposition~\ref{s2sum01} yields another commutative diagram
$$
\xymatrix{
\sw_R(X\oplus Y) \ar[r]^-{\cong} \ar[dd]_{\sw_R(f_1)}
& \sw_R(X) \oplus (X\otimes_RY)  \oplus \sw_R(Y)
\ar[dd]^{\left(\begin{smallmatrix}
\id_{\sw_R(X)} & 0 & 0 \\
0 & 0 & 0 \\
0 & 0 & 0
\end{smallmatrix}\right)} \\
\\
\sw_R(X\oplus Y) \ar[r]^-{\cong}
& \sw_R(X) \oplus (X\otimes_RY)  \oplus \sw_R(Y).
}
$$
Similarly, there is another commutative diagram
$$
\xymatrix{
\sw_R(X\oplus Y) \ar[r]^-{\cong} \ar[dd]_{\sw_R(f_2)}
& \sw_R(X) \oplus (X\otimes_RY)  \oplus \sw_R(Y)
\ar[dd]^{\left(\begin{smallmatrix}
0 & 0 & 0 \\
0 & 0 & 0 \\
0 & 0 & \id_{\sw_R(Y)}
\end{smallmatrix}\right)} \\
\\
\sw_R(X\oplus Y) \ar[r]^-{\cong}
& \sw_R(X) \oplus (X\otimes_RY)  \oplus \sw_R(Y).
}
$$
This implies
that $\sw_R(f_1)+\sw_R(f_2)$ is equivalent to the morphism
$$ \sw_R(X) \oplus (X\otimes_RY)  \oplus \sw_R(Y)
\xra{\left(\begin{smallmatrix}
\id_{\sw_R(X)} & 0 & 0 \\
0 & 0 & 0 \\
0 & 0 & \id_{\sw_R(Y)}
\end{smallmatrix}\right)} \sw_R(X) \oplus (X\otimes_RY)  \oplus \sw_R(Y)$$
and so cannot equal $\id_{\sw_R(X\oplus Y)}$.

Similarly, we have
$\s_R(f_1+f_2)=\s_R(\id_{X\oplus Y})=\id_{\s_R(X\oplus Y)}\neq\s_R(f_1)+\s_R(f_2)$.
\end{ex}

Our final example shows that one needs to be 
careful about removing the local hypotheses from the results
of Section~\ref{sec03}. Specifically, it shows that,
without the local hypothesis, the implication 
\eqref{symm07item1a}$\implies$\eqref{symm07item1d}
fails in Theorem~\ref{symm07}.

\begin{ex} \label{ex0002}
Let $K$ and $L$ be fields, and set $R=K\times L$. 
The prime ideals of $R$ are all maximal, and they are precisely the ideals
$\m=K\times 0$ and $\n=0\times L$. Furthermore, we have
$R_{\m}\cong L$ and $R_{\n}\cong K$.
Assume that $\ch(K)\neq 2$ and $\ch(L)\neq 2$, so that 2 is a unit in $R$.

First, consider the complex
$Y=(K\times 0)\oplus\shift^2(0\times L)$.
Then $Y$ is a bounded-below complex of
finitely generated projective $R$-modules such that 
$Y_{\m}\cong \shift^2L\cong \shift^2R_{\m}$ and $Y_{\n}\cong K\cong R_{\n}$.
Hence, Remark~\ref{disc0002} implies that the surjection $\ppro Y\colon Y\otimes_R Y\to \s_R(Y)$ 
is a quasiisomorphism. However, the fact that $Y$ has nonzero homology in
degrees 2 and 0 implies that
$Y\not\simeq 0$ and $Y\not\simeq\shift^{2t} R$ for each integer $t$.

Next we provide an example of a bounded-below complex $X$ of
finitely generated \emph{free} $R$-modules with the same behavior.
The following complex describes a free resolution $F$ of $K\times 0$
$$\cdots\xra{(e)}R\xra{(f)}R\xra{(e)}R\xra{(f)}\cdots\xra{(f)}R\to 0$$
where $e=(1,0)\in R$ and
$f=(0,1)\in R$. An $R$-free resolution $G$ for $0\times L$ is constructed similarly.
The complex $X=F\oplus \shift^2 G$ yields a
degreewise-finite $R$-free resolution of $g\colon X\res Y$. 
Corollary~\ref{symm05b}\eqref{symm05bitem2} implies that
$\s_R(g)$ is a quasiisomorphism. Hence, the next commutative diagram shows that 
the surjection $\ppro X\colon X\otimes_R X\to \s_R(X)$ is  also a quasiisomorphism.
$$\xymatrix{
X\otimes_RX \ar[r]^{\ppro X} \ar[d]_{\simeq}^{g\otimes g} 
& \s_R(X) \ar[d]_{\simeq}^{\s(g)} \\
Y\otimes_RY \ar[r]^{\ppro Y}_{\simeq} 
& \s_R(Y) 
}$$
However, we have $X\simeq Y$, and so 
$X\not\simeq 0$ and $X\not\simeq\shift^{2t} R$ for each integer $t$.
\end{ex}

\section*{Acknowledgments}
We are grateful to W.\ Frank Moore and Alex Tchernev for
helpful conversations about the existing literature on this subject.
We are grateful to the referee for thoughtful comments.


\begin{thebibliography}{10}

\bibitem{apassov:cdgm}
D.~Apassov, \emph{Complexes and differential graded modules}, Ph.D. thesis,
  Lund University, 1999, p.~55.

\bibitem{avramov:edcrcvct}
L.~L. Avramov, R.-O.\ Buchweitz, and L.~M. {\c{S}}ega, \emph{Extensions of a
  dualizing complex by its ring: commutative versions of a conjecture of
  {T}achikawa}, J. Pure Appl. Algebra \textbf{201} (2005), no.~1-3, 218--239.
  \MR{2158756 (2006e:13012)}

\bibitem{avramov:dgha}
L.~L. Avramov, H.-B.\ Foxby, and S.\ Halperin, \emph{Differential graded
  homological algebra}, in preparation.

\bibitem{bruns:cmr}
W.\ Bruns and J.\ Herzog, \emph{Cohen-{M}acaulay rings}, revised ed., Studies
  in Advanced Mathematics, vol.~39, University Press, Cambridge, 1998.
  \MR{1251956 (95h:13020)}

\bibitem{buchsbaum:asffr}
D.~A. Buchsbaum and D.~Eisenbud, \emph{Algebra structures for finite free
  resolutions, and some structure theorems for ideals of codimension {$3$}},
  Amer. J. Math. \textbf{99} (1977), no.~3, 447--485. \MR{0453723 (56 \#11983)}

\bibitem{dold:hnafa}
A.~Dold and D.~Puppe, \emph{Homologie nicht-additiver {F}unktoren.
  {A}nwendungen}, Ann. Inst. Fourier Grenoble \textbf{11} (1961), 201--312.
  \MR{0150183 (27 \#186)}

\bibitem{eisenbud:ca}
D.~Eisenbud, \emph{Commutative algebra}, Graduate Texts in Mathematics, vol.
  150, Springer-Verlag, New York, 1995, With a view toward algebraic geometry.
  \MR{1322960 (97a:13001)}

\bibitem{felix:rht}
Y.\ F{\'e}lix, S.\ Halperin, and J.-C.\ Thomas, \emph{Rational homotopy
  theory}, Graduate Texts in Mathematics, vol. 205, Springer-Verlag, New York,
  2001. \MR{1802847}

\bibitem{foxby:hacr}
H.-B.\ Foxby, \emph{Hyperhomological algebra \& commutative rings}, lecture
  notes.

\bibitem{frankild:rbsc}
A.~J. Frankild, S.\ Sather-Wagstaff, and A.~Taylor, \emph{Relations between
  semidualizing complexes}, J. Commut. Algebra \textbf{1} (2009), no.~3,
  393--436. \MR{2524860}

\bibitem{tchernev:frpf}
A.~Tchernev and J.~Weyman, \emph{Free resolutions for polynomial functors}, J.
  Algebra \textbf{271} (2004), no.~1, 22--64. \MR{2022478 (2005a:13030)}

\end{thebibliography}
\providecommand{\bysame}{\leavevmode\hbox to3em{\hrulefill}\thinspace}
\providecommand{\MR}{\relax\ifhmode\unskip\space\fi MR }
\providecommand{\MRhref}[2]{%
  \href{http://www.ams.org/mathscinet-getitem?mr=#1}{#2}
}
\providecommand{\href}[2]{#2}

\end{document}